\numberwithin{equation}{section}
\newtheorem{theorem}{Theorem}[section]
\newtheorem{lemma}[theorem]{Lemma}
\newtheorem{corollary}[theorem]{Corollary}
\newtheorem{proposition}[theorem]{Proposition}
\theoremstyle{definition}
\newtheorem{definition}[theorem]{Definition}
\newtheorem{notation}[theorem]{Notation}
\theoremstyle{remark}
\newtheorem{remark}[theorem]{Remark}
\newtheorem{remark*}[theorem]{Remark}
\newtheorem{example}[theorem]{Example}
\DeclareMathOperator{\Aut}{Aut}
\DeclareMathOperator{\End}{End}
\DeclareMathOperator{\Fil}{Fil}
\DeclareMathOperator{\id}{id}
\DeclareMathOperator{\image}{im}
\DeclareMathOperator{\Gal}{Gal}
\DeclareMathOperator{\GSp}{GSp}
\DeclareMathOperator{\Hom}{Hom}
\DeclareMathOperator{\Irr}{Irr}
\DeclareMathOperator{\Isom}{Isom}
\DeclareMathOperator{\pr}{pr}
\DeclareMathOperator{\QIsog}{QIsog}
\DeclareMathOperator{\Sh}{Sh}
\DeclareMathOperator{\Spec}{Spec}
\DeclareMathOperator{\Spf}{Spf}
\def \AA {\mathbb{A}}
\def \DD {\mathbb{D}}
\def \FF {\mathbb{F}}
\def \NN {\mathbb{N}}
\def \QQ {\mathbb{Q}}
\def \XX {\mathbb{X}}
\def \ZZ {\mathbb{Z}}
\def \Acal {\mathcal{A}}
\def \Gcal {\mathcal{G}}
\def \Ocal {\mathcal{O}}
\def \Xcal {\mathcal{X}}
\def \Cfr {\mathfrak{C}}
\def \Ifr {\mathfrak{I}}
\def \Mfr {\mathfrak{M}}
\def \Pfr {\mathfrak{P}}
\def \Tfr {\mathfrak{T}}
\def \Xfr {\mathfrak{X}}
\def \Zfr {\mathfrak{Z}}
\def \mfr {\mathfrak{m}}
\def \nfr {\mathfrak{n}}
\def \pfr {\mathfrak{p}}
\def \Ascr {\mathscr{A}}
\def \Jscr {\mathscr{J}}
\def \Lscr {\mathscr{L}}
\def \Mscr {\mathscr{M}}
\def \Oscr {\mathscr{O}}
\def \Pscr {\mathscr{P}}
\def \Sscr {\mathscr{S}}
\def \Ahat {\hat{A}}
\def \Bhat {\hat{B}}
\def \hbar {\bar{h}}
\def \Xtilde {\tilde{X}}
\def \rtilde {\tilde{r}}
\def \xtilde {\tilde{x}}
\def \Gsf {\mathbf{G}}
\def \bbf {\mathbf{b}}
\def \Esf  {\mathsf{E}}
\def \Gsf  {\mathsf{G}}
\def \Ksf  {\mathsf{K}}
\def \Ssf  {\mathsf{S}}
\def \Xsf  {\mathsf{X}}
\def \FFbar {\overline{\mathbb{F}}}
\def \QQbar {\overline{\mathbb{Q}}}
\def \BT    {{Barsotti-Tate group}}
\def \mono  {\hookrightarrow}
\def \epi   {\twoheadrightarrow}
\def \isom  {\stackrel{\sim}{\rightarrow}}
\newcommand{\pot}[1]{ [\hspace{-0,17em}[ {#1} ]\hspace{-0,17em}] }
\newcommand{\bigslant}[2]{{\raisebox{.2em}{$#1$}\hspace{-.3em}\left/ \hspace{-.2em}\raisebox{-.2em}{$#2$}\right.}}
\newcounter{subenvcounter}
\newenvironment{subenv}{%
 \begin{list}
  {\em (\arabic{subenvcounter})}
  {\setlength{\leftmargin}{20pt}
   \setlength{\rightmargin}{0pt}
   \setlength{\itemindent}{0pt}
   \setlength{\labelsep}{5pt}
   \setlength{\labelwidth}{13pt}
   \setlength{\listparindent}{\parindent}
   \setlength{\parsep}{0pt}
   \setlength{\itemsep}{0pt}
   \setlength{\topsep}{-\parskip}
   \usecounter{subenvcounter}}}
  {\end{list}}
\newcounter{asslistcounter}
\newenvironment{assertionlist}{
 \begin{list}
  {\upshape (\alph{asslistcounter})}
  {\setlength{\leftmargin}{18pt}
   \setlength{\rightmargin}{0pt}
   \setlength{\itemindent}{0pt}
   \setlength{\labelsep}{5pt}
   \setlength{\labelwidth}{13pt}
   \setlength{\listparindent}{\parindent}
   \setlength{\parsep}{0pt}
   \setlength{\itemsep}{0pt}
   \setlength{\topsep}{-.5\parskip}
   \usecounter{asslistcounter}}}
  {\end{list}}
\newenvironment{bulletlist}{
 \begin{list}
  {$\bullet$}
  {\setlength{\leftmargin}{18pt}
   \setlength{\rightmargin}{0pt}
   \setlength{\itemindent}{0pt}
   \setlength{\labelsep}{5pt}
   \setlength{\labelwidth}{13pt}
   \setlength{\listparindent}{\parindent}
   \setlength{\parsep}{0pt}
   \setlength{\itemsep}{0pt}
   \setlength{\topsep}{-.5\parskip}
   \usecounter{bulllistcounter}}}
 {\end{list}}
\def \Igfr {{\mathfrak{Ig}}}
\def \perf {{(p^{-\infty})}}
\def \univ {{\rm univ}}
\def \dom {{\rm dom}}
\def \Def {{\mathfrak{Def}}}
\def \loc {{\rm loc}}
\def \BTT {{Barsotti-Tate group with crystalline Tate tensors}}
\def \BTTs {{Barsotti-Tate groups with crystalline Tate tensors}}
\author[P.~Hamacher]{Paul~Hamacher}
\title[The product structure of Newton strata]{The product structure of Newton strata in the good reduction of Shimura varieties of Hodge type}
\address{Technische Universi\"at M\"unchen \\ Zentrum Mathematik - M11 \\ Boltzmannstra{\ss}e 3 \\ 85748 Garching bei M\"unchen \\ Deutschland}
\email{hamacher@ma.tum.de}
\begin{document}

 \begin{abstract}
   We construct a generalisation of Mantovan's almost product structure to Shimura varieties of Hodge type with hyperspecial level structure at $p$ and deduce that the perfection of the Newton strata are proétale locally isomorphic to the perfection of the product of a central leaf and a Rapoport-Zink space. The almost product formula can be extended to obtain an analogue of Caraiani's and Scholze's generalisation of the almost product structure for Shimura varieties of Hodge type. 
 \end{abstract}

 \maketitle

\section{Introduction}

 In the past years, Shimura varieties have been extensively studied in connection with the Langlands program. Conjecturally, the cohomology of Shimura varieties should decompose in terms of certain automorphic representations and their Langlands parameters. 
  A main tool to study the arithmetic of Shimura varieties is the Newton stratification, which is defined below. The almost product structure of Newton strata allows us to study their geometry and cohomology in terms of two simpler objects: Igusa varieties and Rapoport-Zink spaces. This is well understood in the special case of Shimura varieties of PEL-type. Building upon the work of Harris and Taylor \cite{HarrisTaylor:TheBook}, Mantovan gave the construction of the almost product structure of Newton strata for Shimura variety of PEL type with hyperspecial level structure at $p$ in \cite{Mantovan:Foliation} and derived a formula for the cohomology of a local system over Shimura varieties and Rapoport-Zink spaces associated to a representation of the linear algebraic group attached to the Shimura variety (\cite[Thm.~3.1]{Mantovan:PELnon-trivCoeff}). Recently, Caraiani and Scholze gave an infinite version of Mantovan's almost product structure in their paper \cite{CaraianiScholze:ShVar}. It gives a more elegant description at the cost that the objects considered are no longer of finite type. 

The aim if this paper is to extend this structure to the larger class of Shimura varieties of Hodge type.  Its applications to the cohomology of Shimura varieties of Hodge type will be discussed in a subsequent joint work with Wansu Kim. 
 To make our statements precise, we fix the notation as follows. Let $(\Gsf,\Xsf)$ be a Shimura datum of Hodge type, $\Ksf \subset \Gsf(\AA_f)$ a small enough compact open subgroup which is hyperspecial at $p$. Denote by $G$ the reductive model of $\Gsf$ over $\ZZ_p$ corresponding to $\Ksf_p$. The existence of a canonical integral model $\Sscr_G$ of the Shimura variety $\Sh_\Ksf(\Gsf,\Xsf)$ was shown by Kisin \cite{Kisin:IntModelAbType}, with some exceptions in the case $p=2$. His construction of $\Sscr_G$ also equips it with a principally polarised Abelian scheme $(\Acal_G,\lambda_G) \to \Sscr_G$ and crystalline Tate tensors $(t_{G,\alpha,x})$ on the (contravariant) Dieudonn\'e module $\DD(\Acal_{G,x}[p^\infty])$ for every point $x \in \Sscr_G(\FFbar_p)$. While the {\BTT} $(\Acal_{G,x}[p^\infty],\lambda_{G,x} (t_{G,\alpha,x}))$ depends on some choices made during the construction of $\Sscr_G$, it induces an isocrystal with $G$-structure $\Gcal_x$ over $\FFbar_p$ which is independent of them. Lovering constructed an isocrystal with $G$-structure over the special fibre $\Sscr_{G,0}$ of $\Sscr_G$ in \cite{lovering:GCrys}, which specialises to $\Gcal_x$ for every $x \in \Sscr_G(\FFbar_p)$. Thus we have a well-defined Newton stratification on $\Sscr_{G,0}$; we denote the Newton stratum associated to a $\sigma$-conjugacy class $\bbf$ in $G(L)$ by $\Sscr_{G,0}^\bbf$ (see section~\ref{ss Newton strat} for details). By the same argument the central leaves in $\Sscr$, i.e.\ the locus where the isomorphism class of $(\Acal_{G,x}[p^\infty],\lambda_{G,x}, (t_{G,\alpha,x}))$ is constant is well-defined.
 
 \subsection{The almost product structure in the special fibre}

 Using the almost product structure in deformation spaces constructed in \cite{Hamacher:DeforSpProdStr} as a starting point, we construct for a suitable central leaf $C \subset \Sscr_G$ a family of compatible surjective finite-to-finite correspondences
 \begin{center}
  \begin{tikzcd}
   & \Jscr_m^{(p^{-N})} \times \Mscr_{G,\mu}(\bbf)^{m_1,m_2} \arrow{dl} \arrow{dr}{\pi_N} & \\
   C^{(p^{-N})} \times \Mscr_{G,\mu}(\bbf)^{m_1,m_2} & & \Sscr_{G,0}^\bbf
  \end{tikzcd}
 \end{center}
 generalising Mantovan's construction in \cite{Mantovan:Foliation}. Here $(\Jscr_m)_{m\in\NN}$ denotes the tower of Igusa varieties over $C$ and $(\Mscr_{G,\mu}(\bbf)^{m_1,m_2})_{m_1 < m_2 \in \ZZ^2}$ a certain exhausting family of closed subschemes of the special fibre of the corresponding Rapoport-Zink space. We denote by $F_{p^N}\colon  \Jscr_m^{(p^{-N})} \to \Jscr_m$ the Frobenius and by $F_{p^\infty}\colon \Jscr_m^{\perf} \to \Jscr_m$ their limit for $N \to \infty$, i.e.\ the perfection. Moreover, we show that when taking the limit $N,m,m_1,m_2 \to \infty$ we obtain a correspondence of perfect schemes
 \begin{center}
  \begin{tikzcd}
   & \Jscr_\infty^{\perf} \times \Mscr_{G,\mu}(\bbf)^{\perf} \arrow{dl}[swap]{r_\infty} \arrow{dr}{\pi_\infty^\perf} & \\
   C^{\perf}\times \Mscr_{G,\mu}(\bbf)^{\perf} & & \Sscr_{G,0}^{\bbf,\perf}
  \end{tikzcd}
 \end{center}
 where $r_\infty^\perf$ and $\pi_\infty^\perf$ are weakly \'etale morphisms. More precisely, if $(X_0,\lambda_0,t_{0,\alpha})$ denotes a {\BTT} such that the central leaf $C$ is defined by $(\Acal_{G,x}[p^\infty],\lambda_G (t_{G,\alpha,x})) \cong (X_0,\lambda_0,t_{0,\alpha})$ for any $x \in C$, then $r_\infty^\perf$ is a torsor under the profinite group representing the automorphism group of $(X_0,\lambda_0,t_{0,\alpha})$ and $\pi_\infty^\perf$ is a torsor under the locally pro\'etale group representing its self-quasi-isogenies.
 
 \subsection{Caraiani-Scholze type product structure for Shimura varieties of Hodge type} 
 The infinite almost product structure can be extended to a generalisation of the product structure of Caraiani and Scholze constructed in \cite[\S~4]{CaraianiScholze:ShVar}. More explicitly, we construct an isomorphism \[\Igfr^\bbf \times \Mfr_{G,\mu}(\bbf) \isom \Xfr^\bbf \] of formal schemes over $\Spf \breve\ZZ_p$, where $\Igfr^b$ denotes the flat lift of $\Jscr_\infty^\perf$ to $\breve\ZZ_p$, $\Mfr_{G,\mu}(\bbf)$ is the corresponding Rapoport-Zink space and $\Xfr^\bbf$ parametrises trivialisations of $(\Acal_G,\lambda_G,(t_{G,\alpha}))$ up to isogeny.
 
 
 The structure of this article is as follows. After giving the necessary background in section~\ref{sect background}, we lay the foundations of our construction by recalling Mantovan's almost product structure in the special case of the Siegel variety in section~\ref{sect siegel}. The main construction is given in section~\ref{sect mantovan}, where we construct the almost product structure of Newton strata in Shimura varieties of Hodge type with hyperspecial level at $p$. Building upon this construction, we also generalise the Caraiani-Scholze product structure in section~\ref{sect caraiani scholze}.

 This article was written independently from the article \cite{zhang} of Zhang, which was put on ArXiv approximately two weeks before the first version of this paper. In his work, he  gives a partial construction of the morphisms $\pi_N\colon \Jscr_m^{(p^{-N})} \times \Mscr_{G,\mu}(\bbf)^{m_1,m_2} \to \Sscr_{G,0}$. He constructs the Igusa tower and the restriction of $\pi_N$ to the smooth locus.
 
 \emph{Acknowledgements:} I am grateful to Mark Kisin for many helpful discussions and his advice. I warmly thank Thomas Lovering for giving me a preliminary version of his thesis. I thank Stephan Neupert and Eva Viehmann for pointing out some mistakes in the preliminary version of this article. Most of this work was written during a stay at the Harvard University which was supported by a fellowship within the Postdoc program of the German Academic Exchange Service (DAAD). I want to thank the Harvard University for its hospitality. Moreover, the author was partially supported by the ERC starting grant 277889 ``Moduli spaces of local $G$-shtukas''.

\section{Shimura varieties of Hodge type} \label{sect background}

 Let $p > 2$ be a prime number. We will focus our study on the class of Shimura varieties of Hodge type with hyperspecial level structure at $p$.  
  
 \subsection{Crystalline Tate tensors} 

 While the reduction modulo $p$ of a Shimura variety of Hodge type will not be given by a moduli description, we will have an Abelian variety with additional structure by crystalline Tate tensors which can be regarded as a replacement for a universal object. We briefly recall the notion of crystalline Tate tensors as presented in \cite{Kim:RZ}. 

 \begin{notation}
  Assume that $M$ is either
  \begin{bulletlist}
   \item a locally free module over a ring $R$,   
   \item a locally free $\Ocal_{S/\ZZ_p}$-crystal for an $\FF_p$-scheme $\ZZ_p$ or
   \item an isocrystal associated to a locally free $\Ocal_{S/\ZZ_p}$-crystal.
  \end{bulletlist}
  Then $M^\otimes$ is defined as the direct sum of any finite combination of tensor products, symmetric products, alternating products and duals of $M$.
  We call tensor of $M$ is a morphism $s\colon \mathbf{1} \to M^\otimes$, where $\mathbf{1}$ denotes $R$, $\Ocal_{S/\ZZ_p}$ or $\Ocal_{S/\ZZ_p}[\frac{1}{p}]$.
  
 \end{notation}

 We fix an $\FF_p$-scheme $S$ as base scheme and consider the category of quasi-coherent $\Oscr_{S/\ZZ_p}$-crystals. Then $\mathbf{1} = \Oscr_{S/\ZZ_p}$ comes with a canonical Frobenius action $F\colon  \Oscr_{S/\ZZ_p}^{(p)} \to \Oscr_{S/\ZZ_p}$.
  For any Abelian scheme or {\BT} $X$ over $S$ we denote the contravariant Dieudonn\'e crystal as in \cite[\S~3]{BerthelotBreenMessing:Book} by $\DD(X)$. The pull-back $\DD(X)_S$ to the Zariski site of $S$ is naturally equipped with the Hodge filtration $\Fil_{\DD(X)}^1 := \omega_X \subset \DD(X)_S$, which is Zariski-locally a direct summand of rank $\dim X$. Moreover the relative Frobenius of $X$ over $S$ induces a map $F\colon \DD(X)^{(p)} \to \DD(X)$, which is also called the Frobenius. As the relative Frobenius is an isogeny, $F$ induces an isomorphism of isocrystals $\DD(X)[\frac{1}{p}]^{(p)} \isom \DD(X)[\frac{1}{p}]$ and $\DD(X)[\frac{1}{p}]^{\otimes\, (p)} \isom \DD(X)[\frac{1}{p}]^\otimes$.
 \begin{definition}[{\cite[Def.~2.3.4]{Kim:RZ}}]
  A tensor $t$ of $\DD(X)$ is called a crystalline Tate tensor if it induces a morphism of $F$-isocrystals $\mathbf{1} \to \DD(X)[\frac{1}{p}]^\otimes$, i.e. it is Frobenius equivariant.
 \end{definition}

 \subsection{Construction of integral models} \label{sect integral model}
  
 Let $(\Gsf,\Xsf)$ be Shimura datum of Hodge type, i.e.\ there exists an embedding of Shimura data $(\Gsf,\Xsf) \mono (\mathsf{GSp}_{2g},\Ssf^\pm)$ or some integer $g$ where $\Ssf^\pm$ denotes the Siegel double space. Now $\Xsf$ defines a conjugacy class $\{\mu\}$ of cocharacters of $\Gsf$. Its field of definition $\Esf$ is called the Shimura field of $(\Gsf,\Xsf)$. We denote by $E$ its $p$-adic completion for some fixed embedding $\overline\QQ \mono \overline\QQ_p$ and $O_E$ the corresponding ring of integers. We fix a small enough compact open subgroup $\Ksf = \Ksf_p\Ksf^p \subset \Gsf(\AA_f)$ which is hyperspecial at $p$ (for the precise meaning of ``small enough'' see for example \cite[\S~2]{Milne:CanModelAnnArbor}) and denote by $\Sh_\Ksf(\Gsf,\Xsf)$ the canoncial model of the associated Shimura variety.  The existence of a canonical integral model over $O_E$ for $\Sh_\Ksf(\Gsf,\Xsf)$ was shown by Mark Kisin (\cite{Kisin:IntModelAbType}). We briefly recall his construction and introduce the notions which will be fundamental for the study of the special fibre which follows.
    
 Let $G$ be the Bruhat-Tits-group scheme associated to $\Ksf_p$. As $\Ksf_p$ is hyperspecial, $G$ is a reductive group scheme over $\ZZ_p$. We fix a (unique up to isomorphism) reductive group scheme $G_{\ZZ_{(p)}}$ over $\ZZ_{(p)}$ such that $G = G_{\ZZ_{(p)}} \otimes \ZZ_p$. By \cite[\S~1.3.3]{Kisin:LanglandsRapoport} there exists an embedding of Shimura data $\iota\colon (\Gsf,X) \mono (\mathsf{GSp}_{2g},S^\pm)$ which is induced by an embedding $G_{\ZZ_{(p)}} \hookrightarrow \GSp_{\ZZ_{(p)}}$. Moreover, by \cite[Lemma~2.1.2]{Kisin:IntModelAbType} there exists an open compact subgroup $\Ksf'^p\subset \mathsf{GSp}_{2g}(\AA_f^p)$ such that $\iota$ induces an embedding  $\Sh_\Ksf(\Gsf,X) \mono \Sh_{\Ksf'}(\mathsf{GSp}_{2g},S^\pm)$ with $\Ksf' = \Ksf_p '\Ksf'^p$ and $\Ksf'_p = \GSp_{2g}(\ZZ_p)$.
 
 Denote by $\Ascr_g$ the moduli space of principally polarised Abelian varieties with $\Ksf'^p$-level structure over $\ZZ_{(p)}$ and let $(\Acal^\univ,\lambda^\univ,\eta^\univ)$ be its universal object. Then $\Ascr_g$ is a smooth scheme whose generic fibre is canonically isomorphic to $\Sh_{\Ksf'}(\mathsf{GSp}_{2g},S^\pm)$ (see e.g.\ \cite[\S~5]{Kottwitz:PtShimuraVarFinFields}).  Let $\Sscr_G^-$ be the closure of $\Sh_{\Ksf}(\Gsf,\Xsf)$ in $\Ascr_g \otimes O_E$, denote by $\Sscr_G$ its normalisation. 
 
 \begin{theorem}[{\cite[Thm.~2.3.8]{Kisin:IntModelAbType}}]
  $\Sscr_G$ is the canonical integral model of $\Sh_{\Ksf}(\Gsf,\Xsf)$, in particular it is smooth and independent of the choice of the embedding $\Sh_\Ksf(\Gsf,\Xsf) \mono \Sh_{\Ksf'}(\mathsf{GSp}_{2g},\Ssf^\pm)$.
 \end{theorem}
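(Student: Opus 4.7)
The plan is as follows. Flatness of $\Sscr_G$ over $O_E$ and the identification of its generic fibre with $\Sh_\Ksf(\Gsf,\Xsf)$ are automatic from the construction: $\Sscr_G^-$ is the scheme-theoretic closure of a smooth generic fibre, and normalisation preserves both properties. The two substantive claims are smoothness of $\Sscr_G$ and independence of the choice of embedding $\iota$.

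For smoothness, I would argue formally-locally at a closed point $x \in \Sscr_G(\overline{\FF}_p)$. The input on the Abelian variety side is a collection of Hodge tensors $(s_\alpha) \subset V^\otimes$ on the first relative cohomology of $\Acal_G$, which by construction cut out $G_{\ZZ_{(p)}}$ inside $\GL(V)$. By the Blasius--Wintenberger theorem these Hodge cycles are absolutely Hodge and hence crystalline, so they transfer to Frobenius-equivariant tensors $(t_{G,\alpha,x})$ on $\DD(\Acal_{G,x}[p^\infty])$; after choosing a suitable $W$-lattice, the isomorphism $\Isom\bigl((\Lambda_0,(s_\alpha)),(\DD(\Acal_{G,x}[p^\infty]),(t_{G,\alpha,x}))\bigr)$ is a trivial $G$-torsor. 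The next step is the deformation-theoretic comparison: the universal deformation of $\Acal_{G,x}[p^\infty]$ carrying the tensors $(t_{G,\alpha,x})$ is representable by a smooth formal scheme $\Spf R_G$ over $O_E$, obtained as the completed local ring at the identity of the opposite unipotent radical of the parabolic attached to $\mu$. This comes from Grothendieck--Messing deformation theory applied to $G$: preserving the Tate tensors is equivalent to choosing a $G$-stable Hodge filtration, and the torsor of such filtrations over the Siegel one is a torsor under the unipotent radical of the parabolic, which has the correct dimension $\dim \Sh_\Ksf$.

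It then remains to compare $\Spf R_G$ with the completed local ring $\Ocal_{\Sscr_G,x}^\wedge$. The embedding $\iota$ induces a closed immersion $\Spf R_G \hookrightarrow \Spf \Ocal_{\Ascr_g,\iota(x)}^\wedge$ into the universal deformation of $\iota(\Acal_{G,x})$, and this lands inside $\Spf \Ocal_{\Sscr_G^-,x}^\wedge$ because both represent the same functor after inverting $p$ (where they coincide with an analytic open in $\Sh_\Ksf$). Since $\Spf R_G$ is smooth and hence normal, and $\Sscr_G$ is the normalisation of $\Sscr_G^-$, the map $\Spf R_G \to \Spf \Ocal_{\Sscr_G,x}^\wedge$ is an isomorphism, proving smoothness of $\Sscr_G$ at $x$.

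For independence, I would compare two integral models $\Sscr_{G,1}, \Sscr_{G,2}$ associated to embeddings $\iota_1, \iota_2$ by forming the normalisation $\Sscr_{G,12}$ of the closure of $\Sh_\Ksf(\Gsf,\Xsf)$ inside $\Ascr_{g_1} \times_{\ZZ_{(p)}} \Ascr_{g_2} \otimes O_E$. Applying the local analysis above to each of the two projections shows that $\Sscr_{G,12} \to \Sscr_{G,i}$ is formally étale between smooth, flat, normal $O_E$-schemes with the same generic fibre, hence an isomorphism by a standard Zariski--Nagata argument. The two resulting isomorphisms $\Sscr_{G,12} \isom \Sscr_{G,i}$ give the desired canonical identification.

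The main obstacle is the deformation-theoretic part: one must genuinely construct $\Spf R_G$ as a $G$-stable deformation space and check that the tensors produced crystallinely from the Hodge cycles match those implicit in $\mu$, so that the map into $\Spf \Ocal_{\Sscr_G^-,x}^\wedge$ lands in the right locus and becomes an isomorphism after normalisation. This is the heart of Kisin's proof and relies on the interplay between Dieudonn\'e theory with $G$-structure and the classification of filtered $F$-crystals with tensors.
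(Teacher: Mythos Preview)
The paper does not give its own proof of this theorem: it is stated as a citation of \cite[Thm.~2.3.8]{kisin10} and used as a black box, with the subsequent paragraph only recalling the construction of the crystalline Tate tensors $(t_{G,\alpha,x})$ and the local comparison in Proposition~\ref{prop local-global} as \emph{ingredients} of Kisin's proof, not as a proof in this paper. So there is no ``paper's own proof'' to compare against.

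That said, your outline is a fair summary of Kisin's actual argument, with two points worth sharpening. First, the passage from Hodge tensors to Frobenius-invariant tensors on $\DD(\Acal_{G,x}[p^\infty])$ is not Blasius--Wintenberger (a characteristic-zero de Rham/\'etale statement); it is Kisin's integral $p$-adic Hodge theory via Breuil--Kisin modules that produces tensors on the \emph{integral} Dieudonn\'e module and guarantees the torsor $\underline{\Isom}\bigl((M,(s_\alpha)),(\DD(\Acal_{G,x}[p^\infty]),(t_{G,\alpha,x}))\bigr)$ is trivial. This is exactly what makes the deformation ring $R_G$ a power series ring over $\breve\ZZ_p$ rather than over a possibly ramified extension, and is the genuinely hard input. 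Second, for independence of the embedding, Kisin does not directly compare two closures in a product; he proves the stronger \emph{extension property} (every $O_E$-point of a regular formally smooth test scheme defined on the complement of a codimension-$\geq 2$ closed subset extends), which uniquely characterises the model and gives independence as a corollary. Your product-of-embeddings argument would work once smoothness is known, but it is not the route taken in \cite{kisin10}.
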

 
 A central point of his proof is the following construction. Denote by $h:(\Acal_G,\lambda_G) \to \Sscr_G$ the pullback of $(\Acal^\univ,\lambda^\univ)$. Let $(M,\psi)$ be the symplectic $\ZZ_p$ module of rank $2g$ and fix  a family of tensors $(s_\alpha) \subset M^\otimes$ such that $G \subset \GSp_{2g}$ is their stabiliser (cf. \cite[Prop.~1.3.2]{Kisin:IntModelAbType}). For every closed geometric point in the special fibre $x \in \Sscr_G(\FFbar_p)$, Kisin associates a family of crystalline Tate tensors $(t_{G,\alpha,x}) \subset \DD(\Acal_{G,x})$ such that $(\DD(\Acal_{G,x}),\lambda_{G,x}, t_{G,\alpha,x})$ is isomorphic to $(M_{\breve\ZZ_p},\psi \otimes 1, s_\alpha \otimes 1)$ (cf.\ \cite[Prop.~1.3.7]{Kisin:LanglandsRapoport}). 
 
 One can interpolate these pointwise tensors with global tensors as follows. Recall that the choice of $(s_\alpha)$ induced horizontal tensors $(t_{G,\alpha,{\rm dR}})$ on the first de Rham - cohomology $R^1h_\ast\Omega^\bullet$, as well as tensors $(t_{G,\alpha,p})$ on $R^1h_{\eta\, et \ast} \ZZ_p$ and $(t_{G,\alpha,l})$ on $R^1h_{et \ast} \QQ_l$ for $l \not= p$ (cf.\ \cite[\S~1.3.6]{Kisin:LanglandsRapoport}). Since $\Sscr_G$ is smooth, the tensors $t_{G,\alpha,{\rm dR}}$ induce crystalline Tate tensors $t_{G,\alpha}$ on $\DD(\Acal_G/\Sscr_{G,0})$ for every $\alpha$ (see \cite[\S~3.1.6]{HowardPappas:GSpin}). By construction the family $(t_{G,\alpha})$ specialises to $(t_{G,\alpha,x})$ for every $x \in \Sscr_G(k)$.
 
 Using the crystalline Tate tensors introduce above, we can describe the local geometry of the normalisation morphism $\Sscr_G \to \Sscr_G^-$ as follows.

 \begin{proposition}[{\cite[Prop.~1.3.9, Cor.~1.3.11]{Kisin:LanglandsRapoport}}] \label{prop local-global}
  Let $x_0 \in \Sscr_G^-(\FFbar_p)$.
  \begin{subenv}
   \item Two points $x,x' \in \Sscr_G(\FFbar_p)$ over $x_0$ are identical if and only if $t_{G,\alpha,x} = t_{G,\alpha,x'}$ for every $\alpha$.
   \item The morphism mapping the formal neighbourhood $\Sscr_{G,x}^\wedge$ to the deformation space $\Def(\Acal_x^\univ[p^\infty],\lambda_x^\univ, t_{G,\alpha,x})$ induced by the deformation $(\Acal_G, \lambda_G, (t_{G,\alpha}))|_{\Sscr_{G,x}^\wedge}$ is an isomorphism. In particular, we have a commutative diagram
   \begin{center}
    \begin{tikzcd}[column sep = small]
     \Sscr_{G,x_0}^\wedge \arrow{d} \arrow{r}{\sim} & \coprod\limits_{x \mapsto x_0} \Def(\Acal_{x_0}^\univ[p^\infty],\lambda_{x_0}^\univ, t_{G,\alpha,x}) \arrow{d} & \\
     \Sscr_{G,x_0}^{- \wedge} \arrow{r}{\sim} & \bigcup\limits_{x \mapsto x_0} \Def(\Acal_{x_0}^\univ[p^\infty],\lambda_{x_0}^\univ, t_{G,\alpha,x})  \arrow[hook]{r} & \Def(\Acal_{x_0}^\univ[p^\infty],\lambda_{x_0}^\univ) = \Ascr_{g,x_0}^\wedge.
    \end{tikzcd}
   \end{center}
  \end{subenv}
 \end{proposition}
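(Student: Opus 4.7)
The plan is to establish (2) first, since (1) will follow as an immediate consequence. The central ingredient is Serre--Tate deformation theory, which supplies the canonical identification $\Ascr_{g,x_0}^\wedge \cong \Def(\Acal^\univ_{x_0}[p^\infty], \lambda^\univ_{x_0})$ for the Siegel formal neighbourhood.

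For each lift $x$ of $x_0$ I would first define the closed formal subfunctor $\Def(\Acal^\univ_{x_0}[p^\infty], \lambda^\univ_{x_0}, t_{G,\alpha,x})$ of $\Def(\Acal^\univ_{x_0}[p^\infty], \lambda^\univ_{x_0})$ classifying those deformations to which the tensors $(t_{G,\alpha,x})$ lift as Frobenius-equivariant tensors on the universal $p$-divisible group. Using Grothendieck--Messing theory together with the compatibility of the Hodge filtration with the cocharacter $\mu$ of the Shimura datum, one verifies that this is representable by a smooth formal scheme of dimension $\dim \Sscr_G$ over $O_{\breve E}$ (this is essentially the local statement of \cite{hamacher}, which was cited earlier).

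The main step is then to identify the completion $\Sscr_{G,x_0}^{-\wedge}$ with $\bigcup_{x \mapsto x_0} \Def(\Acal^\univ_{x_0}[p^\infty], \lambda^\univ_{x_0}, t_{G,\alpha,x})$ as closed formal subschemes of $\Ascr_{g,x_0}^\wedge$. The inclusion $\supseteq$ uses that the $(t_{G,\alpha,x})$ — which arise via the integral $p$-adic comparison from parallel étale Tate tensors cutting out the $G$-structure on the generic fibre of $\Sscr_G$ — extend as Frobenius-equivariant tensors over the entire formal neighbourhood of $x$ in $\Sscr_G$. The reverse inclusion uses that a deformation of $\Acal^\univ_{x_0}[p^\infty]$ over which the tensors extend Frobenius-equivariantly admits, via the comparison isomorphism applied to a characteristic $0$ specialisation, an Abelian variety carrying a $G$-structure compatible with $\Xsf$, and hence yields a point of $\Sh_\Ksf(\Gsf, \Xsf)$ and thereby of $\Sscr_G^-$.

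Since each $\Def(\Acal^\univ_{x_0}[p^\infty], \lambda^\univ_{x_0}, t_{G,\alpha,x})$ is smooth and thus normal, normalising the union disjoints its branches and produces $\Sscr_{G,x_0}^\wedge \cong \coprod_{x \mapsto x_0} \Def(\Acal^\univ_{x_0}[p^\infty], \lambda^\univ_{x_0}, t_{G,\alpha,x})$, yielding (2). Statement (1) is then immediate: distinct lifts $x, x'$ over $x_0$ correspond to distinct connected components, and since a component is determined by its defining family of Frobenius-equivariant tensors one must have $t_{G,\alpha,x} \neq t_{G,\alpha,x'}$. The main obstacle is the identification in the previous paragraph: it requires integral $p$-adic Hodge theory to bridge the characteristic $0$ moduli description of $\Sh_\Ksf(\Gsf,\Xsf)$ (formulated with étale tensors and Hodge structures) and the characteristic $p$ Frobenius-equivariance condition on the Dieudonné crystal, with particular care needed to ensure that the tensor extension behaves well across specialisation from the generic to the special fibre.
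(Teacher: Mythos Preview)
The paper does not give its own proof of this proposition: it is quoted verbatim from Kisin's \emph{Mod $p$ points on Shimura varieties of abelian type} (Prop.~1.3.9 and Cor.~1.3.11 there), and the paper simply cites it as input. So there is no ``paper's own proof'' to compare against; the relevant comparison is with Kisin's argument.

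Your outline is broadly faithful to Kisin's approach --- Serre--Tate theory for the ambient Siegel formal neighbourhood, definition of the tensor-preserving deformation functor, smoothness of the latter, and integral $p$-adic Hodge theory (Breuil--Kisin modules) to match the \'etale and crystalline tensor conditions. Two remarks are worth making. First, you have the two inclusions labelled the wrong way round: the argument that the $t_{G,\alpha,x}$ extend Frobenius-equivariantly over $\Sscr_{G,x}^\wedge$ yields $\Sscr_{G,x_0}^{-\,\wedge} \subseteq \bigcup_x \Def(\ldots,t_{G,\alpha,x})$, not $\supseteq$; conversely, producing a Shimura-variety point from a tensor-preserving deformation via the comparison isomorphism gives the containment $\supseteq$. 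Second, your deduction of (1) from (2) implicitly assumes that distinct tensor families cut out distinct formal subschemes of $\Ascr_{g,x_0}^\wedge$; this is not automatic from the union decomposition alone. In Kisin's treatment the order is rather the reverse: one first establishes $\Sscr_{G,x}^\wedge \cong \Def(\ldots,t_{G,\alpha,x})$ for each individual lift $x$, and then (1) follows because the normalisation map identifies $\Sscr_{G,x}^\wedge$ with a branch of $\Sscr_{G,x_0}^{-\,\wedge}$, and equal tensors would force equal branches and hence equal points in the normalisation.
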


\subsection{The Newton stratification and Oort's foliation}
\label{ss Newton strat}
  In order to define the Newton stratification on the special fibre $\Sscr_{G,0}$ of $\Sscr_G$, we first recall some facts about $\sigma$-conjugacy classes. We fix a maximal torus and a Borel subgroup $T \subset B \subset G$. Let $k$ be an algebraically closed field of characteristic $p$, $W(k)$ its ring of Witt vectors, $L(k)$ be the fraction field of $W(k)$ and let $\sigma$ be the Frobenius automorphism of $L(k)$. We denote for any element $b \in G(L(k))$ by $[b]$ its $\sigma$-conjugacy class and by $B(G)$ the set of $\sigma$-conjugacy classes in $G(L(k))$. The set $B(G)$ is independent of the choice of $k$ by \cite[Lemma~1.3]{RapoportRichartz:Gisoc}. An element $\bbf \in B(G)$ is uniquely determined by two invariants (\cite[\S~6]{Kottwitz:Gisoc1}), its Newton point $\nu(\bbf) \in X_\ast(T)^{\Gal(\QQbar_p,\QQ_p)}_\dom$ and its Kottwitz point $\kappa(\bbf) \in \pi_1(G)_{\Gal(\QQbar_p,\QQ_p)}$.

  For $x \in \Sscr_G(\FFbar_p)$ choose $b_x$ such that the Frobenius morphism on $\DD(\Acal_x^\univ)$ is mapped to $b_x \sigma$ under an isomorphism $(\DD(\Acal_x^\univ),\lambda_{G,x},t_{\alpha,x}) \cong (M_{\breve\ZZ_p},\psi \otimes 1, s_\alpha \otimes 1)$. Then $b_x$ is uniquely defined up to $G(\breve\ZZ_p)$-$\sigma$-conjugacy, in particular $[b_x]$ is independent of the choice of an isomorphism. We define for any $\bbf \in B(G)$ 
  \[
   \Sscr_G^\bbf(\FFbar_p) = \{ x\in \Sscr_{G,0}(\FFbar_p) \mid b_x \in \bbf \}.
  \]
  Lovering showed in \cite{lovering:GCrys} that since $(\DD(\Acal_G),\lambda_G,t_{G,\alpha})$ is locally isomorphic to $(M,\psi,s_\alpha)$, we obtain an isocrystal with $G$-structure over $\Sscr_{G,0}$ in the sense of Rapoport and Richartz after inverting $p$. Thus, the $\Sscr_G^\bbf(\FFbar_p)$ are locally closed subvarieties of $\Sscr_{G,0} (\FFbar_p) $ by \cite[Thm.~3.6]{RapoportRichartz:Gisoc}; they are called the Newton strata of $\Sscr_{G,0}$.

\begin{remark*}
 The above description of $\Sscr_G^\bbf$ is valid for any algebraically closed field of characteristic $p$. More precisely, for any geometric point $x \in \Sscr_{G,0}(k)$, one can define $b_x \in G(L(k))$ (unique up to $G(W(k)$-conjugacy) analogously to the case $k= \FFbar_p$ above. By definition of the Newton stratification, the geometric points of a stratum $\Sscr^\bbf$ are given by
 \[
   \Sscr_G^\bbf(k) = \{ x\in \Sscr_{G,0}(k) \mid b_x \in \bbf \}.
  \]
 However, in the rest of this article we work almost exclusively with \emph{closed} geometric points.
\end{remark*}

  One can describe the elements $b_x$ obtained by above construction as follows. By \cite[\S~1.4.1]{Kisin:LanglandsRapoport}, we have $b_x \in G(\breve\ZZ_p)\mu(p)G(\breve\ZZ_p)$, where $\mu \in X_\ast(T)$ is the unique dominant cocharacter in the conjugacy class $\{\mu\}$ defined at the beginning of section~\ref{sect integral model}. On the other hand, by \cite[Prop.~1.4.4]{Kisin:LanglandsRapoport} there exists for any $b \in G(\breve\ZZ_p)\mu(p)G(\breve\ZZ_p) \cap \bbf$ with $\Sscr_G^\bbf \not= \emptyset$ an $x \in \Sscr_{G}^\bbf(\FFbar_p)$ such that $b_x = b$ (up to $G(\breve\ZZ_p)$-$\sigma$-conjugacy).
  
  Let $\bbf \in B(G)$ such that $\Sscr_G^\bbf$ is nonempty and fix a polarised {\BTT} $(X_0,\lambda_0,t_{0,\alpha})$ over $k$ such that its Dieudonn\'e crystal is isomorphic to $(M_{\breve\ZZ_p},\psi\otimes 1, s_\alpha \otimes 1)$ and its Frobenius corresponds to an element of $\bbf \cap G(\breve\ZZ_p)\mu(p)G(\breve\ZZ_p)$. The corresponding central leaf $C$ in $\Sscr_G^\bbf$ is the set of geometric points $x\colon \Spec k \to \Sscr_G^\bbf$ such that there exists an isomorphism
  \[
   (\Acal_x^\univ[p^\infty],\lambda_x^\univ, t_{\alpha,x}) \cong (X_0, \lambda_0,t_{0,\alpha}) \otimes k.
  \]
  Then $C$ defines a closed subset of $\Sscr_G^\bbf$ by \cite[Prop.~2.14]{Hamacher:DeforSpProdStr}.

  \begin{proposition}
   $C$ is smooth of dimension $2 \langle \rho, \nu(\bbf) \rangle$, where $\rho$ denotes the half-sum of positive roots of $G$.
  \end{proposition}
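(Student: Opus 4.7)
The plan is to pass to the formal neighborhood of an arbitrary point $x \in C$ and reduce the claim to a computation inside the local deformation space. By Proposition \ref{prop local-global}(2), the formal completion $\Sscr_{G,x}^\wedge$ is identified with the deformation space $\Def(\Acal_x^\univ[p^\infty], \lambda_x^\univ, (t_{G,\alpha,x}))$ of the polarised \BTT{} at $x$. Under this identification, the formal completion $C^\wedge_x$ corresponds precisely to the sublocus of those deformations whose geometric fibre remains isomorphic (as a polarised \BTT) to $(X_0, \lambda_0, (t_{0,\alpha}))$ — this is exactly the defining condition of the central leaf translated to deformations.

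The main tool is then the almost product structure on deformation spaces established in the author's previous paper \cite{hamacher}, which produces a formally smooth decomposition of $\Def(X_0, \lambda_0, (t_{0,\alpha}))$ as, up to finite-flat correspondences, a product of a "constant" factor (the formal completion of $C$) and a Rapoport--Zink-type factor measuring quasi-isogenies. First I would cite this local product formula to see that $C^\wedge_x$ is a formally smooth formal subscheme of $\Sscr_{G,x}^\wedge$. Since this holds at every point $x \in C$, and since $C$ has been equipped with the reduced subscheme structure inside the locally closed $\Sscr_G^\bbf$, this smoothness of the formal completions implies that $C$ is smooth as a scheme.

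For the dimension, the product decomposition gives
\[
\dim C = \dim \Sscr_G^\bbf - \dim \Mscr_{G,\mu}(\bbf),
\]
where the right-hand Rapoport--Zink factor has dimension $\langle \rho, \mu + \mu^\sigma \rangle - \langle \rho, \nu(\bbf)\rangle$ computed exactly as in the Siegel case (Oort), and the Newton stratum $\Sscr_G^\bbf$ has dimension $\langle 2\rho, \mu\rangle - \defect(\bbf) - \langle 2\rho, \mu - \nu(\bbf)\rangle$ by the dimension formula for Newton strata of Hodge-type Shimura varieties. Putting these together yields $\dim C = 2\langle \rho, \nu(\bbf)\rangle$.

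The main obstacle I expect is not the dimension bookkeeping but the verification that the almost product structure from \cite{hamacher} applies in the form needed here, i.e.\ that the "isomorphism class" locus of the deformation space genuinely corresponds to the central leaf in the integral model via the identification of Proposition \ref{prop local-global}. This compatibility hinges on the fact that the crystalline Tate tensors $t_{G,\alpha,x}$ built by Kisin agree — point by point — with those on the fixed reference object $(X_0, \lambda_0, (t_{0,\alpha}))$, so that the abstract "constant-deformation" locus in the local deformation theory really coincides with the set of points where $(\Acal_x[p^\infty], \lambda_x, (t_{G,\alpha,x}))$ is isomorphic to $(X_0,\lambda_0,(t_{0,\alpha}))$. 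Once this matching is in place, smoothness and the dimension formula both follow directly from the local theory.
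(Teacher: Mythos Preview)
Your smoothness argument is correct and follows the same route as the paper: pass to the formal neighbourhood via Proposition~\ref{prop local-global}, identify $C_x^\wedge$ with the central leaf inside $\Def(X_0,\lambda_0,(t_{0,\alpha}))$, and invoke the local results of \cite{hamacher}. The paper phrases the conclusion as ``all formal neighbourhoods of closed points of $C$ are isomorphic, hence $C$ is smooth'', but the substance is the same.

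Your dimension argument, however, takes an unnecessary detour and risks circularity. You write $\dim C = \dim \Sscr_G^\bbf - \dim \Mscr_{G,\mu}(\bbf)$ and then appeal to the global dimension formula for Newton strata in Hodge-type Shimura varieties together with a dimension formula for the Rapoport--Zink space. But the global Newton-stratum dimension in the Hodge-type setting is not a logical prerequisite here; it is itself typically \emph{deduced} from the local almost product structure of \cite{hamacher} (or from the global one constructed later in this very paper) combined with the local leaf dimension. Citing it as an input is therefore backwards. The paper avoids this entirely: once one knows $\dim C = \dim C_x^\wedge$, one simply quotes \cite{hamacher}~Proposition~3.9, which computes the dimension of the central leaf in the deformation space directly as $2\langle \rho, \nu(\bbf)\rangle$ by a purely local group-theoretic calculation. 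No global Newton-stratum or Rapoport--Zink dimension formulas are needed, and the argument stays self-contained.
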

  \begin{proof}
    We fix $x \in C(\FFbar_p)$ and consider the isomorphism $\Sscr_{G,0,x}^\wedge \cong \Def(X_0,\lambda_0,t_{0,\alpha})$. As this identifies the canonical {\BTTs} lying over these schemes, it identifies $C_x^\wedge$ with the central leaf in the deformation space. Thus all formal neighbourhoods of closed points of $C$ are isomorphic, which shows that it is smooth. Also, $\dim C$ equals the dimension of the central leaf in $\Def(X_0,\lambda_0,t_{0,\alpha})$, thus the second assertion follows from \cite[Prop.~3.9]{Hamacher:DeforSpProdStr}.
  \end{proof}
    
  From now on, we fix $\bbf$ and assume that $X_0$ is completely slope divisible. We refer the reader to \cite[\S~1]{OortZink:pdiv} for details on completely slope divisible \BT s. Let $\bbf'$ denote the image of $\bbf$ in $B(\GSp_{2g})$, $\mu'$ denote the image of $\mu$ in $X_\ast(\GSp_{2g})$ and let $C' \subset \Ascr_g^{\bbf'}$ denote the central leaf associated to $(X_0,\lambda_0)$. Since $C'$ is smooth the {\BT} $\Xcal_0 := \Acal^\univ[p^\infty]_{|C'}$ is completely slope divisible (\cite{Mantovan:Foliation}~\S~3). We denote by $X_0^i$ and $\Xcal_0^i$ the respective isoclinic subquotients of $X_0$ and $\Xcal_0$.
 
\section{Almost product structure in the Siegel moduli space}
\label{sect siegel}
 As our construction of the almost product structure for Shimura varieties of Hodge type will heavily rely on the almost product structure for Siegel moduli spaces, we recall its construction.

 \subsection{Construction of the almost product structure}
 The almost product structure of Newton strata in the Siegel moduli space is originally due to Oort \cite{Oort:Foliations}. Mantovan generalized this result to Shimura varieties of PEL-type and gave a more rigorous description in \cite{Mantovan:Foliation}. We briefly sketch Mantovan's construction in the case of the Siegel moduli space. In particular, no claim of originality is made for this subsection.
 
 Denote by $\mathscr{J}'_m \to C'$ the Igusa variety of level $m$, that is the scheme parametrising families of isomorphisms $j_{m}^i\colon X_0^i[p^m] \isom \Xcal_0^i[p^m]$ which commute with the polarisations and for any $m' > m$ can be lifted \'etale locally to an isomorphism of the schemes of $p^{m'}$-torsion points. 
 
 \begin{proposition}[\cite{Mantovan:Foliation}~Prop.~4] \label{prop igusa}
  The morphism $\Jscr'_m \to C'$ is finite \'etale and Galois with Galois group $\Gamma'_{b,m} := \image(\Aut(X_0) \to \Aut(X_0[p^m]))$.
 \end{proposition}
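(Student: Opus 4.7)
\emph{Proof plan.} The strategy is to assemble $\Jscr'_m$ from isomorphisms of the individual isoclinic subquotients, exploiting the fact that on the central leaf the isoclinic pieces of $\Xcal_0$ are rigidified by the central leaf condition. I will mimic Mantovan's argument but organise it around the Igusa tower, of which $\Jscr'_m$ is a finite quotient.

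First, I would note that because $X_0$ is completely slope divisible and $(X_0,\lambda_0)$ is the geometric fibre of every point of $C'$, the universal BT group $\Xcal_0\to C'$ is itself completely slope divisible. Its slope filtration then exists globally, the isoclinic subquotients $\Xcal_0^i$ have every geometric fibre isomorphic to $X_0^i$, and $\lambda_0$ pairs $\Xcal_0^i$ with a dual subquotient. Thus one may reduce the problem to a separate analysis of each $\Xcal_0^i$, together with a closed condition for compatibility with the symplectic pairing.

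Next, for fixed $i$, I would examine the sheaf on $C'$ of isomorphisms $X_0^i[p^m]_{C'} \isom \Xcal_0^i[p^m]$ that \'etale-locally lift to every higher level $p^{m'}$. On the one hand, $\underline{\Isom}(X_0^i[p^m]_{C'},\Xcal_0^i[p^m])$ is representable by a finite scheme over $C'$, as after an fppf trivialisation it is a torsor under the finite group $\Aut(X_0^i[p^m])$. On the other hand, for completely slope divisible isoclinic BT groups in families, rigidity of isogenies (Oort) implies that the subsheaf of liftable trivialisations is a torsor under the pro-\'etale group sheaf $\underline{\Aut}(\Xcal_0^i)$, whose finite quotient $\image(\Aut(X_0^i)\to \Aut(X_0^i[p^m]))$ acts on the locus of $p^m$-liftable isomorphisms. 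This simultaneously gives representability, finiteness, \'etaleness, and the Galois property at isoclinic level.

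Finally, I would recombine: $\Jscr'_m$ is the closed subscheme of the product of isoclinic Igusa schemes cut out by compatibility with $\lambda_0$, hence finite \'etale over $C'$. The natural precomposition action $(j_m^i)\mapsto (j_m^i \circ g|_{X_0^i[p^m]})$ of $\Aut(X_0,\lambda_0)$ on trivialisations factors through $\Gamma'_{b,m}$ by construction and is simply transitive on geometric fibres, since any two liftable trivialisations at level $m$ extend to trivialisations at arbitrarily large level that then differ by a global automorphism. The hardest step to justify carefully is the very first one: that the liftability condition yields a \emph{finite} \'etale cover rather than merely an ind-\'etale one. This reduces to showing that the tower of $p^{m'}$-liftability conditions stabilises at level $m$ up to a finite \'etale base change, which is precisely the content of the theory of completely slope divisible BT groups over a central leaf.
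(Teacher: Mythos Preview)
The paper does not supply its own proof of this proposition: it is quoted verbatim from Mantovan \cite[Prop.~4]{mantovan05}, and the text moves on immediately to the definition of the Rapoport--Zink subschemes $\Mscr_{\GSp_{2g},\mu'}(\bbf')^{m_1,m_2}$. So there is no argument in the paper to compare against; your proposal is effectively a reconstruction of Mantovan's proof rather than a variant of anything in this article.

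As a reconstruction, your outline is along the right lines and follows Mantovan's strategy: decompose into isoclinic pieces using complete slope divisibility, establish finite \'etaleness at each isoclinic level, then impose the polarisation compatibility as a closed (in fact clopen) condition. Two remarks on the details. First, your justification that $\underline\Isom(X_0^i[p^m]_{C'},\Xcal_0^i[p^m])$ is finite because ``after an fppf trivialisation it is a torsor'' is slightly circular; the cleaner route is to observe it is a closed subscheme of the $\underline\Hom$-scheme between finite locally free group schemes, which is itself finite over $C'$. Second, you correctly flag the genuine content as the passage from the infinite liftability condition to a finite \'etale cover. In Mantovan this is handled via the structure of isoclinic completely slope divisible {\BT}s: over $C'$ each $\Xcal_0^i$ becomes, after a Frobenius twist, the constant group $X_0^i \times C'$, so the tower of Igusa varieties for $\Xcal_0^i$ is identified with the constant tower for $X_0^i$, whose transition maps are finite \'etale with the expected Galois groups. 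You gesture at this (``theory of completely slope divisible BT groups over a central leaf'') but a complete write-up would have to invoke this geometric constancy explicitly.
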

 
 Denote by $\Mfr_{\GSp_{2g},\mu'}(\bbf')$ the Rapoport-Zink space parametrising quasi-isogenies with source $(X_0,\lambda_0)$ and by $\Mscr_{\GSp_{2g},\mu'}(\bbf')$ its underlying reduced subscheme. For any pair of integers $m_1,m_2$, let $ \Mscr_{\GSp_{2g},\mu'}(\bbf')^{m_1,m_2} \subset \Mscr_{\GSp_{2g},\mu'}(\bbf')$ denote the (closed) subvariety parametrising quasi-isogenies $\rho$ such that $p^{m_1}\rho$ and $p^{m_2}\rho^{-1}$ are isogenies.
  
 Now assume $m_1 + m_2 \leq m$ and choose $N$ such that the filtration on $F_{p^N/k}^\ast \Xcal_0[p^m]$ induced by the slope filtration splits canonically as in \cite{Mantovan:Foliation}~Lemma~8. Thus the family of isomorphisms $(j_m^i)$ induced by $\Jscr_m'^{(p^{-N})} \to \Jscr_m'$ yields an isomorphism $j_m\colon X_0[p^m] \times \Jscr_m'^{(p^{-N})} \isom \Xcal_{0}[p^m]_{\Jscr_m'^{(p^{-N})}}$.
 
 Now $\pi_N'\colon \Jscr_m'^{(p^{-N})} \times \Mscr_{\GSp_{2g},\mu'}(\bbf')^{m_1,m_2} \to \Ascr_g^{b'}$ is constructed as follows. Let $(\Acal,\lambda,\eta)$ be the pullback of $(\Acal^\univ,\lambda^\univ,\eta^\univ)$ to $\Jscr_m'^{(p^{-N})} \times \Mscr_{\GSp_{2g},\mu'}(\bbf')^{m_1,m_2}$. Denote by $\rho$ the universal quasi-isogeny over $\Mscr_{\GSp_{2g},\mu'}(b')^{m_1,m_2}$. We define
 \[
  \Acal' := \bigslant{\Acal}{j_m(\ker (p^{m_1}\rho))}.
 \] 
 and endow it with the polarisation $\lambda'$ and a $K'^p$-level structure $\eta'^p$ induced by $\lambda$ and $\eta$, respectively. Then $(\Acal',\lambda',\eta')$ defines a morphism
 \[
  \pi_{N,m,m_1,m_2}'\colon \Jscr_m'^{(p^{-N})} \times \Mscr_{\GSp_{2g},\mu'}(\bbf')^{m_1,m_2} \to \Ascr_g^{\bbf'}.
 \]
 The $\pi_{N,m,m_1,m_2}$ satisfy the obvious commutativity relations when varying $N,m,m_1,m_2$.
  
 \begin{proposition}[\cite{Mantovan:Foliation}~Prop.~9] \label{prop compatibility properties of mantovans morphism}
  The family of morphisms $(\pi_{N,m,m_1,m_2}')$ satisfies the obvious commutativity relations. More precisely, the following identities hold.
  \begin{subenv}
   \item $\pi_{N,m,m_1,m_2}' \circ (F_{p/k} \times \id) = \pi_{N+1,m,m_1,m_2}'$.
   \item Let $r_{m+1,m}'\colon \Jscr_{m+1}' \epi \Jscr_m'$ be the canonical projection. Then $\pi_{N,m-1,m_1,m_2}' \circ (r_{m+1,m}' \times \id) = \pi_{N,m,m_1,m_2}'$, where it is assumed that $m_1+m_2 \leq m-1$.
   \item Let $\iota'\colon \Mscr_{\GSp_{2g},\mu'}(\bbf)^{m_1',m_2'} \mono \Mscr_{\GSp_{2g},\mu'}(\bbf)^{m_1,m_2}$ the obvious embedding for $m_1' \leq m_1, m_2' \leq m_2$. Then $\pi_{N,m,m_1,m_2}' \circ \iota = \pi_{N,m',m_1',m_2'}'$.
  \end{subenv}
 \end{proposition}

 \begin{notation}
  For the rest of this article, we will abbreviate $\pi_{N,m,m_1,m_2}'$ by $\pi_{N,m}'$ and assume that one has made a fixed choice of $m_1,m_2$ depending on $m$ such that for varying $m$ the sequences  $(m_1)$ and $(m_2)$ are monotonously increasing and unbounded.
 \end{notation}
 
 \begin{proposition}[{\cite[Prop.~10,~11]{Mantovan:Foliation}}]
 \label{prop geometric properties of mantovans morphism}
  The morphism $\pi_{N,m}'$ is finite, and surjective if $m$ is big enough.
 \end{proposition}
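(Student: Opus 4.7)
My plan is to show finiteness by verifying properness and quasi-finiteness separately, and then treat surjectivity as an independent existence statement about quasi-isogenies to the central leaf.

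For quasi-finiteness, I would compute the fibre of $\pi_{N,m}'$ over a geometric point $x' \in \Ascr_g^{\bbf'}(\bar k)$. A point in the fibre is the datum of a point $\tilde x \in C'(\bar k)$, an Igusa trivialisation $j_m$ of $\Acal_{\tilde x}[p^\infty]$, and a quasi-isogeny $\rho$ satisfying the bounds prescribed by $m_1,m_2$, such that $\Acal'_{x'} = \Acal_{\tilde x}/j_m(\ker p^{m_1}\rho)$. The relation is equivalent to giving an isogeny $\phi: \Acal'_{x'} \to \Acal_{\tilde x}$ whose kernel sits inside $\Acal'_{x'}[p^{m_1+m_2}]$, so the isogeny class of $\Acal'_{x'}$ yields only finitely many possible targets $\Acal_{\tilde x}$ (the kernel varies over a scheme of finite type over $\bar k$ which is in fact finite). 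For each such $\tilde x$, the morphism $\Jscr_m' \to C'$ is finite étale by Proposition~3.1, so the set of admissible $j_m$ is finite, and with $\phi$ fixed $\rho$ is then determined up to the action of $\Aut(X_0)$ modulo level $p^m$, which is finite.

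For properness, I would apply the valuative criterion. Given a DVR $R$ with fraction field $K$, a $K$-point $((j_m,\tilde x_K),\rho_K)$ of the source, and an extension of its image to $\Spec R \to \Ascr_g^{\bbf'}$, I need to extend the triple to $\Spec R$. The isogeny $\phi_K: \Acal'_K \to \Acal_{\tilde x_K}$ has bounded kernel, so it spreads out to an isogeny $\phi_R$ whose target is an abelian scheme $\Acal_R$ extending $\Acal_{\tilde x_K}$; this yields an $R$-point of $\Ascr_g^{\bbf'}$ which by the closedness of $C' \subset \Ascr_g^{\bbf'}$ lands in the central leaf and therefore, since $\Jscr_m' \to C'$ is finite, lifts uniquely to $\Jscr_m'^{(p^{-N})}$ extending $j_m$. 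The quasi-isogeny $\rho_K$ then extends by rigidity of quasi-isogenies in characteristic $p$, and the bounds $m_1,m_2$ are preserved because they are closed conditions on the Rapoport-Zink space. Combining with quasi-finiteness gives finiteness.

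Surjectivity for $m$ large is the step where I expect the real work. For $x' \in \Ascr_g^{\bbf'}(\bar k)$, one must produce a quasi-isogeny $\rho_{x'}: X_0 \to \Acal'_{x'}[p^\infty]$ whose bounds are controlled by $m_1,m_2$ and such that the quotient by the appropriate subgroup lands on the central leaf $C'$ with an Igusa structure of level $m$. The existence of \emph{some} quasi-isogeny is automatic from the isocrystal-theoretic description of $\bbf'$, and the fact that its target can be taken to lie on $C'$ is the content of Oort's foliation theory combined with complete slope divisibility of $X_0$; the main obstacle is bounding its height uniformly, which is why one must enlarge $m$ (and hence $m_1,m_2$) until every quasi-isogeny class representing the required modification fits inside $\Mscr_{\GSp_{2g},\mu'}(\bbf')^{m_1,m_2}$ and the induced trivialisation descends through the Igusa tower at level $m$. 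Once the bounds are in place, unwinding the construction of $\Acal'$ shows that $((j_m,\tilde x),\rho)$ maps to $x'$.
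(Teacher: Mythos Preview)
The paper does not give its own proof of this proposition; it simply records it as a citation of Mantovan's Propositions~10 and~11. Your sketch is therefore not competing against anything in the present text, and its overall shape---finiteness via quasi-finiteness plus a valuative-criterion properness argument, and surjectivity via a uniform isogeny bound---is essentially how Mantovan proceeds.

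Two points in your write-up are imprecise rather than wrong. In the properness step, ``rigidity of quasi-isogenies'' is not the correct justification for extending $\rho_K$ from $K$ to $R$: rigidity concerns lifts along nilpotent thickenings, not extensions over a DVR. What actually happens is that once you have spread out the isogeny to $\phi_R$ and lifted the Igusa datum to $j_{m,R}$, you \emph{reconstruct} $\rho_R$ directly by setting $K_R := j_{m,R}^{-1}(\ker \phi_R) \subset X_0[p^m]_R$ and $\rho_R := p^{-m_1}\cdot(X_{0,R} \twoheadrightarrow X_{0,R}/K_R)$. You should also make explicit that the extended abelian scheme $\Acal_R$ lands in the Newton stratum $\bbf'$ (because it is fibrewise isogenous to $\Acal'_R$, which lies in $\Ascr_g^{\bbf'}$ by hypothesis) before invoking closedness of $C'$ inside that stratum. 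In the quasi-finiteness step, once $\phi$ and $j_m$ are fixed the point $\rho$ of the Rapoport--Zink space is in fact uniquely determined, not merely up to $\Aut(X_0)$ modulo $p^m$; the ambiguity you mention is already absorbed in the finitely many choices of $\phi$.

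For surjectivity you have correctly isolated the substantive input: a bound, uniform in $x'$, on the height of a quasi-isogeny from $X_0$ to $\Acal'_{x'}[p^\infty]$. The paper uses exactly this ingredient (via \cite[Lemma~4.4]{scholze13}) when it later proves the Hodge-type analogue for $\pi_{N,m}$, so your instinct about where the content lies is on target.
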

 
 \subsection{The almost product structure of infinite level}
 Proposition~\ref{prop compatibility properties of mantovans morphism} tells us that we can take the limit for $N,m \to \infty$ to obtain a morphism
 \[
  \pi_\infty'\colon \underleftarrow{\lim} \Jscr_m^{\perf} \times \Mscr_{\GSp,\mu'}(\bbf') \to \Ascr_g^{\bbf'}.
 \]
 A generalisation of this infinite almost product structure was studied by Caraiani and Scholze in \cite[\S~4]{CaraianiScholze:ShVar}. The results in this section are a formal consequence of their work, we will give the proofs where they simplify due to our restriction to the special fibre or where we will need the technical details to generalise to Shimura varieties of Hodge type.

We can define the morphism $\pi_\infty'$ directly rather than taking a limit. We define the Igusa variety of infinite level as $\Jscr_\infty' := \underleftarrow{\lim} \Jscr_m'$ and by $r_\infty' \coloneqq \varprojlim r_m'\colon \Lscr_\infty' \to C'$ the canonical projection. As a consequence of the moduli description of the finite level Igusa varieties, we obtain that $\Jscr_\infty'$ parametrises families of isomorphisms $X_0^i \isom \Xcal_0^i$. We denote by $j_\infty^{i, \univ}$ the universal isomorphisms over $\Jscr_\infty'$. As the slope filtration splits canonically over perfect schemes, the $j_\infty^{i, \univ}$s induce a universal isomorphism
 \[
  j = \oplus j_\infty^{i, \univ}\colon X_0 \times \Jscr_\infty'^{\perf} \isom \Xcal_{0, \Jscr_\infty'^{\perf}}
 \]
 
\begin{lemma}[{\cite[Prop.~4.3.8]{CaraianiScholze:ShVar}}] \label{lemma Igusa moduli}
 $\Jscr_\infty'^\perf$ is the moduli space of isomorphisms $(X_0,\lambda) \isom (\Acal^\univ[p^\infty],\lambda^\univ)$ and $j$ is the universal object.
\end{lemma}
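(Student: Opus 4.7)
The plan is to establish mutually inverse bijections, natural in the perfect $\FFbar_p$-scheme $T$, between $T$-points of $\Jscr_\infty'^{\perf}$ and isomorphisms $(X_0,\lambda_0)_T \isom (\Acal^\univ[p^\infty],\lambda^\univ)_T$. For a $T$-point of $\Jscr_\infty'^{\perf}$, the map is provided by the universal isomorphism $j = \bigoplus j_\infty^{i,\univ}$ already constructed above: taking the inverse limit of the isomorphisms $j_m^i$ gives isoclinic isomorphisms $X_0^i \isom \Xcal_0^i$ over $\Jscr_\infty'$, and the canonical splitting of the slope filtration over a perfect base (which exists since $X_0$ and $\Xcal_0$ are completely slope divisible, by Oort--Zink) assembles these into an isomorphism $X_0 \isom \Xcal_0$ of \BT s compatible with the polarisations, i.e.\ into the desired isomorphism after pulling back.

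For the converse, suppose $T$ is a perfect $\FFbar_p$-scheme and $\phi: (X_0,\lambda_0)_T \isom (\Acal^\univ[p^\infty],\lambda^\univ)_T$ is an isomorphism of polarised \BT s. The existence of $\phi$ forces the Newton polygon of $\Acal^\univ[p^\infty]_T$ to be constantly equal to that of $X_0$, and more strongly forces $\Acal^\univ[p^\infty]_T \cong (X_0,\lambda_0) \otimes T$ geometrically, so the structure morphism $T \to \Ascr_g$ factors through the central leaf $C'$. Since $T$ is perfect and $\Xcal_0|_{C'}$ is completely slope divisible, the pullback $\Xcal_0|_T$ canonically splits as $\bigoplus \Xcal_0^i|_T$; the same holds for $X_0 \otimes T = \bigoplus X_0^i \otimes T$. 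Because the isoclinic decomposition is a functorial and canonical direct sum decomposition over perfect bases, $\phi$ decomposes uniquely as $\phi = \bigoplus \phi^i$ with $\phi^i: X_0^i \otimes T \isom \Xcal_0^i|_T$, and the compatibility with polarisations is preserved by uniqueness of the decomposition.

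It then remains to extract from $(\phi^i)$ a compatible family of $T$-points of $\Jscr_m'^{\perf}$ for every $m$. One simply sets $j_m^i := \phi^i[p^m]$; these are obviously compatible under the transition maps $\Jscr_{m+1}' \to \Jscr_m'$, and the liftability condition required in the moduli description of $\Jscr_m'$ is satisfied tautologically because $\phi^i$ itself is a global lift of $j_m^i$ to all higher torsion. Thus we obtain a $T$-point of $\varprojlim \Jscr_m'^{\perf} = \Jscr_\infty'^{\perf}$ (using that perfection commutes with cofiltered limits of schemes along affine transition maps), and the two constructions are manifestly mutually inverse on $T$-points.

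The only non-formal ingredient is the canonical splitting of the slope filtration on a completely slope divisible \BT\ over a perfect base, together with its naturality; given that, the remaining arguments are the formal manipulation of moduli descriptions, and no serious obstacle arises. The result may equivalently be deduced from \cite[Prop.~4.3.8]{CS} by restricting to the special fibre, which provides a cross-check.
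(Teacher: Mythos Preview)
Your argument is correct and is precisely what the paper invokes: the paragraph preceding the lemma already constructs $j$ from the canonical splitting of the slope filtration over perfect bases, and the remaining content is outsourced to the citation \cite[Prop.~4.3.8]{CS}, whose proof you have essentially reproduced. The paper gives no separate argument beyond this citation, so there is nothing further to compare.
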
 
  
 Thus we can repeat the above construction. Let $\rho^\univ$ be the universal quasi-isogeny over $\Mscr_{\GSp_{2g},\mu'}(\bbf')$ and $(\Acal,\lambda,\eta)$ the pullback of $(\Acal^\univ,\lambda^\univ,\eta^\univ)$ to $\Jscr_\infty'^\perf \times \Mscr_{\GSp_{2g},\mu'}(\bbf')$. Now, Zariski-locally there exists an integer $m_1$ such that $p^{m_1}\rho^\univ$ is an isogeny. By glueing 
 $
  \Acal/j(\ker p^{m_1} \rho)
 $
 over a suitable Zariski covering, we obtain a polarised Abelian variety over $\Jscr_\infty'^{\perf} \times \Mscr_{\GSp_{2g},\mu'}(\bbf')$ with $\Ksf'^p$ level structure. The induced a morphism
 \[
  \Jscr_\infty'^{\perf} \times \Mscr_{\GSp_{2g},\mu'}(\bbf') \to {\Ascr_g^{\bbf'}}.
 \]
 is identical to $\pi_\infty'$ since their moduli descriptions coincide.
 
 In order to describe the geometric properties of the almost product structure, we need to derive some results about constant {\BT}s, i.e.\ {\BT}s which descend to the spectrum of an algebraically closed field. In the following let $k$ be an algebraically closed field of characteristic $p$.

 \begin{lemma}
  Let $X,Y$ be {\BT s} over $k$ and let $S$ be an integral $k$-scheme. Then the morphism $\Hom(X,Y) \to \Hom(X_S,Y_S)$ induced by pullback is an isomorphism.
 \end{lemma}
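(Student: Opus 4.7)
\emph{Proof plan.} The plan is to establish injectivity and surjectivity of the pullback map separately, with the main subtlety being the rigidity of morphisms between constant {\BT}s over a reduced base.

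For injectivity, I would pick any geometric point $\bar s \colon \Spec \Omega \to S$, which exists because $S$ is connected and therefore non-empty. The composite $\Hom(X,Y) \to \Hom(X_S, Y_S) \to \Hom(X_\Omega, Y_\Omega)$ is the pullback along $\Spec \Omega \to \Spec k$, which is injective (it amounts to tensoring the Dieudonné modules along the faithfully flat extension $W(k) \hookrightarrow W(\Omega)$); hence so is the first arrow.

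For surjectivity, a standard cofiltered-limit argument using that each $\Homfunc(X[p^n], Y[p^n])$ is a $k$-scheme of finite presentation lets me reduce to the case that $S$ is of finite type over $k$. Then I pick a $k$-rational point $s_0 \in S(k)$, set $\phi_0 := \phi|_{s_0} \in \Hom(X, Y)$, and replace $\phi$ by $\phi - (\phi_0)_S$. It then suffices to show: if $\phi \in \Hom(X_S, Y_S)$ vanishes at $s_0$, then $\phi = 0$ identically. I would prove this by showing that the vanishing locus $V := \{s \in S : \phi_s = 0\}$ is both open and closed in $S$, so $V = S$ by connectedness together with $s_0 \in V$.

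Closedness of $V$ is routine: it is the intersection over $n$ of the preimages of the identity section under $S \to \Homfunc(X[p^n], Y[p^n])$, each of which is closed since the group schemes $\Homfunc(X[p^n], Y[p^n])$ are affine over $k$. The main obstacle is openness: these finite-level Hom schemes may fail to be étale (for example, in the supersingular case the identity component can be positive-dimensional), so openness cannot be deduced purely level by level. The key is to exploit that $\phi$ is a morphism of {\BT}s and not merely of their $p^n$-torsion subschemes: the compatibility across all $n$ is what rigidifies the situation. To make this precise I would invoke the full faithfulness of the crystalline Dieudonné functor on {\BT}s over a reduced $\FF_p$-scheme, combined with the fact that $\DD(X_S)$ and $\DD(Y_S)$ are pullbacks of the constant crystals $\DD(X)$ and $\DD(Y)$; horizontal, $F$-equivariant morphisms between such constant pullbacks form a locally constant sheaf on $S$, so vanishing at the $k$-point $s_0$ forces vanishing on a Zariski neighborhood. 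This yields openness of $V$ and completes the proof.
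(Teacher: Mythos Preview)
Your proposal takes a much harder route than necessary, and both of its key reductions have gaps.

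For the cofiltered-limit reduction: a morphism $\phi\in\Hom(X_S,Y_S)$ is an inverse system of maps $\phi_n:S\to\Homfunc(X[p^n],Y[p^n])$, and while each $\phi_n$ individually descends to some finite-type $S_{\lambda_n}$, there is no reason a single $\lambda$ works for all $n$ simultaneously. The schemes $\Homfunc(X[p^n],Y[p^n])$ are of finite type but not finite (already $\Homfunc(\alpha_p,\alpha_p)\cong\GG_a$), so the finitely generated subalgebras of $\Ocal(S)$ over which the $\phi_n$ are defined can genuinely grow with $n$. Without this reduction you have no $k$-rational point with which to anchor the rest of the argument.

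For openness of the vanishing locus you invoke full faithfulness of the crystalline Dieudonn\'e functor over an arbitrary reduced $\FF_p$-scheme, together with the assertion that $F$-equivariant morphisms between constant crystals form a Zariski-locally constant sheaf. The first is a deep result whose known versions carry hypotheses (locally noetherian, formally smooth, etc.) that you have not checked for your $S$; the second, once unwound, is essentially the content of the lemma itself. Note too that if both inputs were granted they would give $\Hom(X_S,Y_S)=\Hom(X,Y)$ outright, so the open--closed argument would be redundant --- the difficulty is entirely in justifying those inputs.

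The paper's proof sidesteps all of this. Since the locus where two morphisms of $S$-group schemes agree is closed and $S$ is reduced, equality of two elements of $\Hom(X_S,Y_S)$ can be tested at generic points; this reduces the lemma to the case $S=\Spec k_0$ for a field $k_0$. By fpqc descent one may replace $k_0$ by its perfection, and then classical Dieudonn\'e theory over perfect fields (using that $k$ is already algebraically closed) gives $\Hom(X_{k_0},Y_{k_0})=\Hom(X,Y)$. Connectedness of $S$ then guarantees that the elements of $\Hom(X,Y)$ obtained at the various generic points coincide. No finite-type reduction and no crystalline machinery over a base are needed.
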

 \begin{proof}

  The injectivity of the morphism is a direct consequence of fpqc descent. To prove surjectivity, let $\varphi \in \Hom(X_S,Y_S)$. In the case where $S = \Spec k_0$ is the spectrum of a perfect field, it follows from Dieudonn\' e theory that $\varphi$ is defined over $k$ (see for example \cite[Lemma~3.9]{RapoportRichartz:Gisoc}). If $k_0$ is a non-perfect field, then there exists a $\varphi_0 \in \Hom(X,Y)$ such that after base change to the perfection $k_0^\perf$ we have $\varphi_{k_0^\perf} = \varphi_{0,k_0^\perf}$. It follows by fpqc-descent that $\varphi = \varphi_{0,k}$. To prove surjectivity in full generality, let $\eta$ be the generic point of $S$ and let $\varphi_0 \in \Hom(X,Y)$ such that $\varphi_\eta = \varphi_{0,\eta}$. Since the property of two homomorphisms being equal is closed, it follows that $\varphi = \varphi_{0,S}$.
 
 

 \end{proof}
 
 We can deduce the homomorphisms of constant {\BT}s over not necessarily integral schemes from the previous lemma, but still need that the base scheme is reduced.

 \begin{lemma}
  Let $X,Y$ be {\BT s} over $k$ and let $H := \Hom(X,Y)$ as topological group equipped with the $p$-adic topology. Then the functor on reduced $k$-schemes
  \begin{eqnarray*}
   ({\rm RedSch}_k)^{opp} &\to& ({\rm Sets}) \\
   S &\mapsto& \Hom(X_S,Y_S)
  \end{eqnarray*}
  is represented by the locally constant $k$-scheme induced by $H$.
 \end{lemma}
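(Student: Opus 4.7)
The plan is to identify the locally constant $k$-scheme induced by $H$ as $\underline{H} := \varprojlim_n \coprod_{h \in H/p^n H} \Spec k$, which makes sense because $H = \Hom(X,Y)$ is a finitely generated $\ZZ_p$-module and hence $H = \varprojlim_n H/p^n H$ in its $p$-adic topology. For any $k$-scheme $S$ the set $\underline{H}(S)$ is $C(|S|, H)$, the continuous maps from $|S|$ to the profinite group $H$, so the goal is to produce a natural isomorphism $\Hom(X_S, Y_S) \isom C(|S|, H)$ for reduced $S$.

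In the forward direction $f \mapsto g_f$: for $s \in |S|$ I restrict $f$ to the connected component $C_s \subseteq S$ (with its reduced induced structure); the previous lemma identifies $\Hom(X_{C_s}, Y_{C_s})$ with $H$ and thereby assigns an element $g_f(s) \in H$. By construction $g_f$ is constant on each connected component. Injectivity is immediate: if $g_f = g_{f'}$ then $f[p^n]$ and $f'[p^n]$ agree at every point for every $n$, hence on the scheme-theoretic equalizer, which is a closed subscheme whose support is all of $|S|$; since $S$ is reduced this equals $S$, so $f[p^n] = f'[p^n]$ for every $n$ and thus $f = f'$.

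In the reverse direction $g \mapsto f$: given continuous $g: |S| \to H$, each reduction $g_n: |S| \to H/p^n H$ is locally constant, yielding a clopen decomposition $S = \bigsqcup_h U_{n,h}$ (finite on each quasi-compact open of $S$). The element $h \in H/p^n H$ canonically determines a homomorphism $X[p^n] \to Y[p^n]$ over $k$ (any two lifts to $H$ differ by $p^n H$, which acts trivially on $X[p^n]$), and base-changing to $U_{n,h}$ and gluing yields $f_n: X_S[p^n] \to Y_S[p^n]$, compatibly in $n$, so $f := \varinjlim_n f_n \in \Hom(X_S, Y_S)$. That the two constructions are mutually inverse follows on each connected component from the previous lemma.

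The crux is the continuity of $g_f$. I plan to interpret $g_f \bmod p^n$ as classifying the $p^n$-torsion morphism $f[p^n]: X[p^n]_S \to Y[p^n]_S$. The injection $H/p^n H \hookrightarrow \Hom_k(X[p^n], Y[p^n])$ identifies $H/p^n H$ with a finite set of $k$-rational homomorphisms and thus with a constant reduced closed subscheme of the schematic $\underline{\Hom}$-functor. By the previous lemma applied componentwise, the classifying morphism of $f[p^n]$ factors, over the reduced base $S$, through this constant subscheme, and a morphism from $S$ to a constant finite $k$-scheme is exactly a locally constant map $|S| \to H/p^n H$. This gives continuity of $g_f$ modulo every $p^n$, and hence continuity into $H$.
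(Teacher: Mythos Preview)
Your proof is correct and follows essentially the same route as the paper's: both reduce to the $p^n$-torsion level, invoke the previous lemma on connected components to see that the classifying morphism $S \to \underline{\Hom}(X[p^n],Y[p^n])$ lands in the constant finite subscheme $(H/p^nH)_k$, and then translate this into a continuous map $\pi_0(S) \to H/p^nH$. Your version is more explicit about constructing the two directions separately and checking injectivity and continuity, whereas the paper compresses this into a single chain of bijections, but the underlying argument is the same.
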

 \begin{proof}
  Let $S$ be a reduced $k$-scheme. Giving a morphism $\phi \in Hom(X_S,Y_S)$ is equivalent to giving an inductive system of $\phi_m \in \Hom(X[p^m],Y[p^m])$ for $m > 0$. By the previous lemma $\phi_m$ can only be lifted to a morphism of {\BT}s only if it is constant on every irreducible component and is hence given by a morphism
  \[
   g_m:S \to \left( \bigslant{H}{p^m} \right)_k \subset \underline\Hom(X[p^m],Y[p^m])
  \]
  or equivalently by a continuous map 
  \[
   f_m\colon \pi_0(S)  \to \bigslant{H}{p^m}.
  \]
  Altogether, we have a natural bijection
  \[
   \Hom(X_S,Y_S) \cong \underrightarrow{\lim} \Hom_{\rm cont}(\pi_0(S), H/p^m) = \Hom_{\rm cont}(\pi_0(S),H).
  \]
 \end{proof}

 \begin{remark*}
  There can be a lot more homomorphisms if $S$ is not reduced, see \cite[\S~4.1]{CaraianiScholze:ShVar}.
 \end{remark*}

 The following is immediate.

 \begin{corollary}
  Let $X,Y$ be {\BT s} over $k$ and let $J := \QIsog(X,Y)$ be equipped with the $p$-adic topology. Then the functor
  \begin{eqnarray*}
   ({\rm RedSch}_k)^{opp} &\to & ({\rm Sets}) \\
   S &\mapsto& \QIsog(X_S,Y_S)
  \end{eqnarray*}
  is represented by the locally constant $k$-scheme induced by $J$.
 \end{corollary}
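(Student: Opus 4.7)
The plan is to reduce directly to the previous lemma by writing a quasi-isogeny as a scalar multiple (by a power of $p$) of an honest homomorphism, and then check that the ``being an isogeny'' condition behaves well on connected components.

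More precisely, first I would recall that by definition any $\phi \in \QIsog(X_S, Y_S)$ admits some integer $n \geq 0$ such that $p^n \phi$ lies in $\Hom(X_S, Y_S)$ and is an isogeny (i.e.\ is faithfully flat with finite locally free kernel). This gives a tautological identification
\[
 \QIsog(X_S, Y_S) = \varinjlim_{n \geq 0} \, p^{-n} \cdot \Hom(X_S, Y_S)^{\rm isog},
\]
where the superscript denotes the subset of those homomorphisms which are isogenies, and the direct limit is with respect to the transition maps sending $f \mapsto pf$.

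Next I would plug in the previous lemma to rewrite $\Hom(X_S, Y_S) \cong \Hom_{\rm cont}(\pi_0(S), H)$ with $H = \Hom(X,Y)$. Under this identification, a continuous map $f : \pi_0(S) \to H$ corresponds to a homomorphism which is an isogeny if and only if each value $f(c) \in H$ is an isogeny: indeed, over each connected component $S_c$ of $S$ the homomorphism is obtained by pullback from $X \to Y$ given by $f(c)$, and pullback preserves the properties of being faithfully flat and of having finite locally free kernel. Hence the isogeny locus in $\Hom(X_S, Y_S)$ is exactly $\Hom_{\rm cont}(\pi_0(S), H^{\rm isog})$ where $H^{\rm isog} \subset H$ is the set of isogenies.

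Finally, since the direct limit $\varinjlim_n p^{-n} H^{\rm isog}$ computes $J = \QIsog(X,Y)$ as a topological group (the $p$-adic topology on $J$ being induced from the $p$-adic topology on $H$ via these identifications), and since continuous maps from the profinite set $\pi_0(S)$ commute with the filtered colimit along $f \mapsto pf$, I would conclude
\[
 \QIsog(X_S, Y_S) \;\cong\; \varinjlim_n \Hom_{\rm cont}(\pi_0(S), p^{-n} H^{\rm isog}) \;\cong\; \Hom_{\rm cont}(\pi_0(S), J),
\]
which is exactly the claim. The only mildly delicate point is the commutation of $\Hom_{\rm cont}(\pi_0(S), -)$ with the colimit; this is standard because $\pi_0(S)$ is profinite (hence compact) and the transition maps are injective, so any continuous map into $J$ factors through some $p^{-n} H^{\rm isog}$.
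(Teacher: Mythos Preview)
Your approach is exactly the one the paper has in mind; the paper itself records the corollary as ``immediate'' from the preceding lemma and gives no further argument, so your write-up is in fact more detailed than the original.

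There is, however, one technical slip that you should repair. You assert that $\pi_0(S)$ is profinite and that every quasi-isogeny $\phi$ admits a \emph{global} integer $n$ with $p^n\phi$ an isogeny. Neither holds for an arbitrary reduced $k$-scheme: for $S=\coprod_{\NN}\Spec k$ the space $\pi_0(S)$ is discrete and non-compact, and the standard definition of a quasi-isogeny only guarantees such an $n$ Zariski-locally on $S$. Consequently, neither your opening identification $\QIsog(X_S,Y_S)=\varinjlim_n p^{-n}\Hom(X_S,Y_S)^{\rm isog}$ nor the final commutation of $\Hom_{\rm cont}(\pi_0(S),-)$ with the colimit is justified as written. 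The fix is a single sentence: since both $S\mapsto \QIsog(X_S,Y_S)$ and the functor represented by $\underline{J}_k$ are Zariski sheaves, it suffices to treat affine $S$, where $\pi_0(S)$ is genuinely profinite and quasi-compactness supplies a global $n$. With that reduction inserted at the start, your argument is complete.
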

 
 Denote by $\Gamma_{b'}$ the automorphism group of $(X_0,\lambda_0)$ and by $J_{b'}$ the group of self-quasi isogenies equipped with the $p$-adic topology. When regarded as locally profinite $\FFbar_p$-group schemes, they represent the sheaves $\underline{\Aut}(X_0,\lambda_0)$ and $\underline{\Aut}_{\QQ}(X_0,\lambda_0)$, respectively, in the category of reduced schemes by the previous proposition.

 The group $J_{b'}$ acts on $\Jscr_\infty'^{\perf}$ via
 \[
  g.(\Acal;j^1,\ldots,j^r) = ((j_\ast g^{-1})(\Acal,\lambda,\eta);g^\ast j).
 \] 
 More explicitly, $(j_\ast g)(\Acal,\lambda,\eta)$ is defined as $\bigslant{\Acal}{j(\ker p^{m_2}g^{-1})}$ with the induced additional structure where $m_2$ is big enough such that $p^{m_2}g^{-1}$ is an isogeny. The isomorphism $g^\ast j$ is defined as the (unique) morphism such that the following diagram is commutative.
 \begin{center}
  \begin{tikzcd}
   X_0 \arrow{r}{j} \arrow[two heads]{d}{p^{m_2} g^{-1}} & \Acal[p^\infty] \arrow{d} \\
   \bigslant{X_0}{j(\ker p^{m_2}g^{-1})} = X_0 \arrow[dashed]{r}{g^\ast j} & (\bigslant{\Acal}{j(\ker p^{m_2}g^{-1})})[p^\infty].
  \end{tikzcd}
 \end{center} 
 Because of our moduli interpretation of $J_{b'}$, this action is continuous.

 \begin{remark}
  \begin{subenv}  
   \item If $\rho \in \Gamma_{b'}$, then
   \[
    \rho.(\Acal;j) = (\Acal,j \circ g)
   \]
   \item The $J_{b'}$-action is induced by the $\Aut(\widetilde\XX_b)$-action given in \cite[Cor.~4.3.5]{CaraianiScholze:ShVar}. In particular, the $J_{b'}$-action extends the action of the submonoid $S_{b'}$ on the tower of (finite level) Igusa varieties as constructed in \cite{Mantovan:Foliation}~\S~4. Moreover Mantovan extends the action of $S_{b'}$ to $J_{b'}$ on the cohomology, where her and our $J_{b'}$-action coincide.
  \end{subenv}
 \end{remark}
 
 It is favourable for our purposes to consider the perfection $\pi_\infty'^{\perf}$ of $\pi_\infty'$; we prove in the proposition below that this makes the morphism weakly \'etale. More precisely, consider the diagram
 \begin{center}
  \begin{tikzcd}
    & \Jscr_\infty'^{\perf} \times \Mscr_{\GSp_{2g},\mu'}(\bbf')^{\perf} \arrow{dl}[swap]{r_\infty'^{\perf} \times \id} \arrow{dr}{\pi_\infty'^{\perf}} & \\
    C'^{\perf} \times \Mscr_{\GSp_{2g},\mu'}(\bbf')^{\perf} & & {\Ascr_g^{\bbf'}}^{\perf}
  \end{tikzcd}
 \end{center}
 where $(-)^\perf$ denotes the perfection of $(-)$ over $\FFbar_p$. Note that by construction $r_\infty'$ is $\Gamma_{b'}$-invariant and $\pi_\infty'$ is $J_{b'}$-invariant.  
 
  \begin{proposition}[cf.\ {\cite[Prop.~4.3.13]{CaraianiScholze:ShVar}}] \label{prop almost product siegel infinite level}
  We have the following results on the geometry of above correspondence.
  \begin{subenv}
   \item $r_\infty'^{\perf}$ is profinite pro\'etale with pro-Galois group $\Gamma_{b'}$.
   \item $\pi_\infty'^{\perf}$ is a $J_{b'}$-torsor for the pro\'etale topology.
  \end{subenv}
 \end{proposition}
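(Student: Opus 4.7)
The plan is to treat the two assertions separately, as (1) is essentially formal while (2) requires genuine work. For (1), I would invoke Proposition~\ref{prop igusa}, which says that each $\Jscr_m' \to C'$ is finite \'etale and Galois with group $\Gamma'_{b,m} = \image(\Aut(X_0,\lambda_0) \to \Aut(X_0[p^m]))$. Taking the inverse limit over $m$ yields a profinite pro\'etale morphism $\Jscr_\infty' \to C'$ with pro-Galois group
\[
\varprojlim_m \Gamma'_{b,m} = \Aut(X_0,\lambda_0) = \Gamma_{b'}.
\]
Each $(\Jscr_m')^\perf \to C'^\perf$ is then again finite \'etale Galois with the same group (perfection preserves finite \'etale morphisms), and passing to the cofiltered limit in $m$ gives (1).

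For (2), the plan decomposes into three subtasks: $J_{b'}$-invariance of $\pi_\infty'^\perf$, simple transitivity on geometric fibres, and pro\'etale local triviality. Invariance is a direct verification against the explicit formulae: for $g \in J_{b'}$, replacing $(\Acal,j)$ by $((j_\ast g^{-1})\Acal,\, g^\ast j)$ together with replacing the universal quasi-isogeny $\rho$ by $\rho \circ g$ leaves the quotient $\Acal/j(\ker p^{m_1}\rho)$ unchanged, so $\pi_\infty'^\perf$ descends through the diagonal $J_{b'}$-action. Simple transitivity on fibres I would check point-theoretically: two triples $(y_i,j_i,\rho_i)$ mapping to the same polarised Abelian variety $(A,\lambda)$ determine, via composition $\rho_i^{-1} \circ j_i^{-1}$ followed by the induced quasi-isogeny $\Acal_{y_i} \to A$, two quasi-isogenies $X_0 \to A[p^\infty]$, whose comparison is the unique $g \in J_{b'}$ relating them.

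The main obstacle is pro\'etale local triviality of $\pi_\infty'^\perf$. Here I would follow \cite[Prop.~4.3.13]{CS}: it suffices to show that pro\'etale locally on $\Ascr_g^{\bbf'\,\perf}$ there exists an isomorphism of $(\Acal^\univ[p^\infty],\lambda^\univ)$ with the constant polarised \BT{} $(X_0,\lambda_0)$ pulled back to the base, up to quasi-isogeny. Over a perfect base the slope filtration of $\Acal^\univ[p^\infty]$ splits canonically into isoclinic summands, which reduces the task to the isoclinic case. By the corollary on constant \BT{}s established above, quasi-isogenies between constant isoclinic \BT{}s over a reduced base are classified by continuous maps into the locally profinite scheme $J_{b'}$, so a suitable pro\'etale refinement of the cover by connected components trivialises the sheaf of quasi-isogeny classes. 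From such a trivialisation one reads off simultaneously a point of $\Jscr_\infty'^\perf$ (the isomorphism on isoclinic pieces) and the associated universal quasi-isogeny, yielding a section of $\pi_\infty'^\perf$ whose ambiguity is governed precisely by $J_{b'}$. Surjectivity on the way is inherited from Proposition~\ref{prop geometric properties of mantovans morphism} at finite level and passage to the limit, which together with the two earlier subtasks concludes (2).
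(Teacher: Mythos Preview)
Your treatment of (1) is correct and matches the paper's one-line reduction to Proposition~\ref{prop igusa}.

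For (2) your route diverges from the paper's, and it contains a genuine gap. The paper does not construct sections over a pro\'etale cover of the base. Instead it first shows that $\pi_\infty'^\perf$ is \emph{weakly \'etale}: Zariski-locally on the source one lands in some $\Mscr_{\GSp_{2g},\mu'}(\bbf')^{m_1,m_2\,\perf}$, and the map then factors as a pro\'etale projection $\Jscr_\infty'^\perf \to \Jscr_{m_1+m_2}'^\perf$ followed by the finite-level map, which is \'etale by \cite[Prop.~4.6]{hamacher}. Given weak \'etaleness (and surjectivity from Proposition~\ref{prop geometric properties of mantovans morphism}), $\pi_\infty'^\perf$ is itself a pro\'etale cover, so to prove it is a $J_{b'}$-torsor it suffices to show that $\pr_2 \times a': J_{b'} \times \Pscr_\infty' \to \Pscr_\infty' \times_{\Ascr_g^{\bbf'\,\perf}} \Pscr_\infty'$ is an isomorphism; the paper checks this by faithfulness of the action (monomorphism) and by writing down, for any $S$-point $(P_1,P_2)$ of the target, the unique $g \in J_{b'}(S)$ making the evident square of quasi-isogenies commute.

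Your argument for local triviality breaks at the step where you invoke the corollary on constant {\BT}s. That corollary computes $\QIsog(X_S,Y_S)$ when \emph{both} $X$ and $Y$ are pulled back from $k$. Over $\Ascr_g^{\bbf'\,\perf}$ the isoclinic graded pieces of $\Acal^\univ[p^\infty]$ are not constant in this sense: their geometric fibres are all abstractly isomorphic to $X_0^i$, but the family is not a priori the pullback of $X_0^i$. So the corollary does not apply, and you have not explained why the isoclinic pieces trivialise after a pro\'etale cover --- this is exactly the nontrivial input that the Caraiani--Scholze argument supplies by other means, and which the paper sidesteps entirely via the weak-\'etaleness reduction to finite level. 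Your plan also never establishes that $\pi_\infty'^\perf$ lies in the pro\'etale site, which is needed regardless of how you produce the local sections.
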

 \begin{proof}
  The first assertion is a direct consequence of Proposition~\ref{prop igusa}. To prove the second assertion, we first prove that $\pi_\infty'$ is weakly \'etale. As this property is local on the source, it suffices to show that $\Jscr_\infty'^{\perf} \times U \to {\Ascr_g^{b'}}^{\perf}$ is weakly \'etale for any  quasi-compact open subset $U$. As $U$ is quasi-compact, there exist $m_1,m_2$ such that $U \subset \Mscr_{\GSp_{2g},\mu'}(b')^{m_1,m_2 \,\perf}$. Thus the above morphism factors as
  \[
   \Jscr_\infty'^{\perf} \times U \to \Jscr_{m_1+m_2}'^{\perf} \times U \to {\Ascr_g^{b'}}^{\perf}
  \]
  Now the left morphism is pro\'etale and thus weakly \'etale and the right morphism is \'etale by \cite{Hamacher:DeforSpProdStr}~Prop.~4.6. Thus their concatenation is also weakly \'etale.
 
  To ease the notation, we write $\Pscr_\infty' := \Jscr_\infty'^{\perf} \times \Mscr_{\GSp_{2g},\mu'}(b')^{\perf}$ and denote by $a'\colon J_{b'} \times \Pscr_\infty' \to \Pscr_\infty'$ the $J_{b'}$-action constructed above. To prove that $\pi_\infty'$ is a $J_{b'}$-torsor, we have to show that
  \[
   \pr_2 \times a'\colon J_{b'} \times \Pscr_\infty' \to \Pscr_\infty' \times_{\Sscr_G^{b' \perf}} \Pscr_\infty'
  \]
  is an isomorphism. The fact that it is a monomorphism follows from the fact that the action of $J_{b'}$ is faithful. So let $S$ be a perfect scheme and $(P_1,P_2)$ be an $S$-valued point of the right hand side, i.e. $P_1 = (\Acal_1,\lambda_1,\eta_1,j_1;\rho_1)$ and $P_2 = (\Acal_2,\lambda_2,\eta_2,j_2;\rho_2)$ with common image $(\Acal_3, \lambda_3,\eta_3)$ under $\pi_\infty'$. Then there exists a (unique) quasi-isogeny $g \in J_{b'}(S)$ such that the diagram
  \begin{center}
   \begin{tikzcd}
    X_{0,S} \arrow[dashed]{rr}{g} \arrow{d}{j_1} & & X_{0,S} \arrow{d}{j_2} \\
    \Acal_1[p^\infty] \arrow{r}{j_{1 \ast} \rho_1}& \Acal_3[p^\infty] & \Acal_2[p^\infty] \arrow{l}[swap]{j_{2 \ast} \rho_2}
   \end{tikzcd}
  \end{center}
  commutes, i.e.\ such that $(\pr_2 \times a')(g,P_1) = (P_1,P_2)$.
 \end{proof}
 
 \section{Almost product structure in Shimura varieties of Hodge type}
 \label{sect mantovan} 
  The almost product structure of $\Sscr_G^{\bbf}$ will be constructed as a suitable lift of $\pi'_{N,m}$ and $\pi'_\infty$, respectively. Before that, we define the factors of the product. For this we keep the notation of the precious sections. Recall that we fixed a completely slope divisible {\BTT} $(X_0,\lambda_0,t_{0,\alpha})$ inside the the isogeny class corresponding to $\bbf$.
 
 \subsection{Igusa varieties} 
 
 We define the perfect infinite level Igusa variety over $C$ as the locus ${\Jscr_\infty}^{\perf} \subset (\Jscr_\infty' \times_{C'} C)^{\perf}$ where $t_{0,\alpha} = j^\ast t_{G,\alpha}$ for every $\alpha$. In other words, ${\Jscr_\infty}^{\perf}$ parametrises the isomorphisms $(X_0,\lambda_0,t_{0,\alpha}) \cong (\Acal_G,\lambda_G,t_{G,\alpha})$. Assuming for a moment that this gives us a well-defined scheme, we define the Igusa varieties as
 \begin{eqnarray*}
  \Jscr_\infty &:=& \image (\Jscr_\infty^{\perf} \to \Jscr_\infty' \times_{C'} C) \\
  \Jscr_m &:=& \image (\Jscr_\infty^{\perf} \to \Jscr_m' \times_{C'} C).
 \end{eqnarray*} 
 
 \begin{proposition} \label{prop igusa varieties hodge}
  Let $\Gamma_b \subset \Gamma_{b'}$ the topological subgroup of elements stabilising $t_{0,\alpha}$ for every $\alpha$.
  \begin{subenv}
   \item $\Jscr_\infty^\perf$ is a closed union of connected components of $(\Jscr_\infty' \times_{C'} C)^\perf$ .
   \item $\Jscr_\infty \to C$ is profinite pro\'etale with Galois group $\Gamma_b$.
   \item The morphism $r_m\colon J_m \to C$ is finite \'etale with Galois group $\Gamma_{b,m} := \image (\Gamma_b \to \End(X_0[p^m]))$.
  \end{subenv}
 \end{proposition}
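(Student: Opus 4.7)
The overall strategy is to reduce the three assertions to the Siegel analogues (Proposition~\ref{prop igusa} and its inverse limit) by analyzing the tensor-preserving condition. The main input is that $(\Jscr_\infty' \times_{C'} C)^\perf \to C^\perf$ is already known to be a profinite pro\'etale $\Gamma_{b'}$-torsor by base change from the Siegel case, so everything amounts to understanding the locus where $j^\ast t_{G,\alpha}=t_{0,\alpha}$ inside this torsor.

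For (a), I would first argue closedness: after the universal trivialisation $j$, both $t_{0,\alpha}$ and $j^\ast t_{G,\alpha}$ become sections of the locally free coherent sheaf $\DD(X_0)^\otimes\otimes\Ocal$ on $(\Jscr_\infty'\times_{C'}C)^\perf$, and equality of such sections cuts out a closed subscheme. For openness (in the ``union of connected components'' sense), the key observation is horizontality: both tensors are crystalline Tate tensors, hence horizontal for the Gauss-Manin connection on the Dieudonn\'e crystal, so their difference $j^\ast t_{G,\alpha}-t_{0,\alpha}$ is a horizontal section of an $F$-crystal pulled back to $(\Jscr_\infty'\times_{C'}C)^\perf$. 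A horizontal section of a crystal vanishing at a point vanishes on the connected component of that point (by uniqueness of solutions to $\nabla s = 0$ with prescribed initial value along infinitesimal thickenings), so its vanishing locus is a union of connected components. Nonemptiness of $\Jscr_\infty^\perf$ over every point of $C$ is immediate from the definition of the central leaf $C$.

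For (b), by Proposition~\ref{prop igusa}, the tower $\Jscr_\infty'^\perf\to C'^\perf$ is profinite pro\'etale with Galois group $\Gamma_{b'}$, and the same holds after base change to $C^\perf$. The $\Gamma_{b'}$-action is by precomposition $j\mapsto j\circ\gamma$; under the contravariant Dieudonn\'e functor this sends $j^\ast t_{G,\alpha}$ to $\DD(\gamma)(j^\ast t_{G,\alpha})$, so the subgroup stabilising the tensor-preserving locus $\Jscr_\infty^\perf$ is exactly $\Gamma_b$. Combining with (a), $\Jscr_\infty^\perf\to C^\perf$ is a profinite pro\'etale $\Gamma_b$-torsor; since the perfection morphism is a universal homeomorphism in characteristic $p$ and pro\'etaleness is insensitive to it, the same is true of $\Jscr_\infty\to C$. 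For (c), the map $\Jscr_\infty\to\Jscr_m$ corresponds to restricting an isomorphism $X_0\isom\Acal_G[p^\infty]$ to its $p^m$-torsion, and two such full isomorphisms have the same restriction iff they differ by an element of $H_m:=\ker(\Gamma_b\to\End(X_0[p^m]))$. Hence $\Jscr_m=\Jscr_\infty/H_m$, a finite \'etale Galois cover with group $\Gamma_b/H_m=\Gamma_{b,m}$.

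The main obstacle will be step (a), specifically the openness/union-of-components statement. One must be careful that the horizontality argument is applied at the correct level: the connection lives naturally on the Dieudonn\'e crystal over the (non-perfect) smooth base $\Sscr_{G,0}$, whereas $\Jscr_\infty^\perf$ sits inside a perfection and an infinite pro\'etale cover. The justification is that pro\'etale covers of a smooth base remain formally smooth enough for horizontal sections of crystals to extend uniquely along infinitesimal thickenings, and perfection does not change either the topological space or the \'etale site; this should be spelled out carefully, but the underlying principle is standard.
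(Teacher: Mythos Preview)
Your overall architecture matches the paper's: establish (a) first, then derive (b) and (c) from the fact that $\Gamma_b$ acts simply transitively on the fibres of $\Jscr_\infty^\perf\to C^\perf$. Your arguments for (b) and (c) are correct and essentially what the paper does in one sentence.

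For (a), however, your justification of the ``union of connected components'' claim has a real gap. You argue that $j^\ast t_{G,\alpha}-t_{0,\alpha}$ is horizontal and that a horizontal section of a crystal vanishing at one point must vanish on the whole connected component. In characteristic~$p$ this fails already for connections: the horizontal sections of $(\Ocal_{\AA^1},d)$ are $k[x^p]$, not constants. Passing to the full crystalline structure does not rescue the argument either, and over the perfection the connection is vacuous since $\Omega^1=0$. What actually forces rigidity here is the \emph{Frobenius} equivariance: both tensors are $F$-invariant sections of the constant $F$-isocrystal $\DD(X_0)^\otimes[\tfrac1p]$, and over a connected (perfect, reduced) base the $F$-invariants of a constant $F$-isocrystal coincide with those over a point. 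This is precisely the content of \cite[Lemma~3.9]{RR96}, which the paper invokes directly. You should either cite that lemma or rewrite the argument to use $F$-invariance rather than $\nabla$.

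Your closedness argument in (a), by contrast, is correct and more direct than the paper's. The paper does not use equality of sections; instead, after knowing the locus is a union of components, it argues topologically on $\pi_0$: the $\Gamma_{b'}$-orbits on $\pi_0((\Jscr_\infty'\times_{C'}C)^\perf)$ biject with $\pi_0(C)$ (hence finitely many), inside each orbit the tensor-preserving components form a single $\Gamma_b$-orbit, and $\Gamma_b\subset\Gamma_{b'}$ being closed then forces the corresponding subset of $\pi_0$ to be closed. Both routes work; the paper's gives a little extra structural information, yours is shorter.
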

 \begin{proof}
 Since the pullback of $\Xcal_0$ is a constant {\BT}, the tensors $t_{0,\alpha}$ and $j^\ast t_{G,\alpha}$ are morphisms of constant $F$-crystals. By \cite[Lemma~3.9]{RapoportRichartz:Gisoc} they are thus locally constant themselves, hence $\Jscr_\infty^\perf$ is a union of connected components. Now by construction, $\Gamma_{b'}$ acts on $\pi_0(\Jscr_\infty' \times_{C'} C)$ and the orbits are in canonical bijection with $\pi_0(C)$. In particular, there are  only finitely many orbits. Within each $\Gamma_{b'}$-orbit, there is a unique $\Gamma_b$-orbit which contains all the connected components contained in $\Jscr_\infty^\perf$. As $\Gamma_b$ is a closed subgroup of $\Gamma_{b'}$, it follows that the set of connected components of $\Jscr_\infty^\perf$ is a closed subspace of $\pi_0(\Jscr_\infty' \times_{C'} C)$. Hence ${\Jscr_\infty}^{\perf} \subset (\Jscr_\infty' \times_{C'} C)^{\perf}$ is a closed union of connected components. The second and third assertion follow from the first and the fact that $\Gamma_b$ acts simply transitively on the fibres of $\Jscr_\infty \to C$.  
 \end{proof}

\subsection{Rapoport-Zink spaces of Hodge type} \label{ss RZ spaces}
  In this section we recall the notion of Rapoport-Zink spaces of Hodge type. Our main reference is \cite{HowardPappas:GSpin} (see also \cite{Kim:RZ}). 

 \begin{definition}[{\cite[Def.~2.3.3]{HowardPappas:GSpin}}]
  Let $({\rm ANilp^{fsm}})$ denote the category of formally smooth formally finitely generated $\breve\ZZ_p$-algebras $A$ such that $p$ is nilpotent in $A$. Define
  \begin{eqnarray*}
   {\rm RZ_{X_0,\lambda_0,t_{0,\alpha}}}\colon {\rm ANilp^{fsm}} & \to & ({\rm Set}) \\
   A &\mapsto& \bigslant{\{(X,\lambda,t_\alpha,\rho)\}}{\cong},
  \end{eqnarray*}
  where $(X,\lambda,t_\alpha)$ is a polarised {\BTT} over $A$ and $\rho\colon X_0 \otimes A/p \to X \otimes A/p$ a quasi-isogeny respecting additional structures such that the following compatibility criteria are met.
  \begin{assertionlist}
   \item The sheaf of isomorphisms
    \[
     \underline\Isom ((\DD(X),\lambda, t_\alpha),(\DD(X_0),\lambda_0,t_{0, \alpha})) 
    \]
    is a crystal of $G_{\breve\ZZ_p}$-torsors w.r.t.\ the natural $G_{\breve\ZZ_p}$-action.
   \item \'Etale locally, we have $\DD(X)_A \cong M_A$ compatible with additional structure such that the image of the Hodge filtration $\Fil^1 \subset \DD(X)_A$ is induced by a cocharacter conjugate to $\mu$.
  \end{assertionlist}
 \end{definition}

 \begin{theorem}[{\cite[Thm.~3.2.1]{HowardPappas:GSpin}}]
  The functor ${\rm RZ}_{X_0,\lambda_0,t_{0,\alpha}}$ is representable by a formally smooth formal scheme $\Mfr_{G,\mu}(\bbf)$ over $\Spf \breve\ZZ_p$, which is formally of finite type.
 \end{theorem}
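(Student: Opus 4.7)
The plan is to proceed by embedding the Hodge-type moduli problem into the (classical) Siegel Rapoport-Zink space and then cutting out a closed formal subscheme using the tensor conditions, following the general strategy of Kim and Howard-Pappas. We have a symplectic embedding $G \hookrightarrow \GSp_{2g}$ stabilising the tensors $(s_\alpha)$, and its reduction over $\breve\ZZ_p$ sends $b$ to $b'$ and $\mu$ to $\mu'$. By the classical theorem of Rapoport-Zink, the functor $\mathrm{RZ}_{X_0,\lambda_0}$ is representable by a formally smooth formal $\breve\ZZ_p$-scheme $\Mfr_{\GSp_{2g},\mu'}(b')$ which is formally of finite type. Forgetting the tensors defines a morphism of functors $\mathrm{RZ}_{X_0,\lambda_0,t_{0,\alpha}} \to \mathrm{RZ}_{X_0,\lambda_0}$, and the whole question reduces to showing that this morphism is representable by a closed immersion onto a formally smooth locally closed formal subscheme.

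First I would describe the tensors on a universal object: over a test algebra $A \in \text{ANilp}^{\rm fsm}$ and for any $(X,\lambda,\rho)$ over $A$, the quasi-isogeny $\rho$ induces an isomorphism of isocrystals $\DD(X_0)[\tfrac{1}{p}] \isom \DD(X)[\tfrac{1}{p}]$, along which one transports each $t_{0,\alpha}$ to a tensor $t_\alpha$ on $\DD(X)[\tfrac{1}{p}]$. The content of condition (a) is that the resulting tensors should be integral (i.e.\ lie in $\DD(X)^\otimes$, not only after inverting $p$) and that the pointwise stabiliser scheme of $(\lambda, t_\alpha)$ in $\underline{\mathrm{Isom}}(\DD(X), M_A)$ is a $G_{\breve\ZZ_p}$-torsor; condition (b) says that the Hodge filtration is a $\mu$-filtration. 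Both are closed conditions on the formal scheme $\Mfr_{\GSp_{2g},\mu'}(b')$: integrality of a single tensor cuts out a closed formal subscheme by the usual argument that $\DD(X)^\otimes \subset \DD(X)[\tfrac{1}{p}]^\otimes$ is closed, the $G$-torsor condition is closed on the base because $G \subset \GL(M)$ is a closed subgroup scheme, and being a $\mu$-filtration is closed because the relevant orbit in the flag variety is closed.

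Next I would verify formal smoothness and the fact that the whole construction is formally of finite type. Formal smoothness goes via Grothendieck-Messing crystalline deformation theory applied to $G$-structures: given a square-zero thickening $\Atilde \twoheadrightarrow A$ with nilpotent ideal, deforming $(X,\lambda,t_\alpha,\rho)$ from $A$ to $\Atilde$ is equivalent to lifting the Hodge filtration on $\DD(X)_\Atilde \cong M_\Atilde$ to a filtration of the same type; the set of such lifts is a torsor under the unipotent radical of the opposite parabolic $P_\mu^- \subset G_{\Atilde}$, which is smooth, so lifts exist locally and smoothly. This gives the formal smoothness statement. For "formally of finite type", it suffices to check that the underlying reduced closed subscheme is locally of finite type over $\FFbar_p$; this subscheme maps into the reduced Siegel Rapoport-Zink space, which is locally of finite type by Rapoport-Zink, so the same holds for its closed subscheme cut out by the tensor conditions. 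Combined with formal smoothness and the noetherian base $\Spf \breve\ZZ_p$, this yields that $\Mfr_{G,\mu}(b)$ is formally of finite type.

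The main obstacle, as in \cite{kim} and \cite{HP}, is the first part of step two: verifying that the integrality of the transported Tate tensors and the $G$-torsor condition really do cut out a closed formal subscheme of the Siegel Rapoport-Zink space, and that this is compatible with the flat $\breve\ZZ_p$-structure. The delicate point is that $t_\alpha$ a priori lives only in the isocrystal, so one must show that the locus where it extends to a section of the integral Dieudonn\'e crystal compatible with $G$-structure is representable; this requires carefully combining the $G$-torsor interpretation of the tensors (so that $G$-integrality on points propagates to crystals) with the fact that $G_{\breve\ZZ_p}$ is a smooth closed subgroup scheme of $\GSp_{2g,\breve\ZZ_p}$. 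Once this representability is in place, the smoothness statement via Grothendieck-Messing is essentially formal.
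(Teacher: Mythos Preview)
The paper does not give its own proof of this theorem: it is stated with the citation \cite[Thm.~3.2.1]{HP} and used as a black box, so there is nothing in the paper to compare your argument against. Your sketch is broadly in the spirit of the Howard--Pappas and Kim approaches (embed into the Siegel Rapoport--Zink space, impose the tensor and Hodge conditions, then check formal smoothness via Grothendieck--Messing for $G$-structures), and you correctly identify the genuinely hard step as the representability of the integrality-of-tensors locus.

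That said, a couple of your ``closed condition'' claims are more delicate than you suggest. Saying that $\DD(X)^\otimes \subset \DD(X)[\tfrac{1}{p}]^\otimes$ is closed does not immediately give a closed condition on the base, since $\DD(X)$ varies as a crystal and one must control this uniformly; in the cited references this is handled not by a naive closedness argument but by first analysing the problem pointwise and on completed local rings, and then invoking a local-model or deformation-ring description to globalise. Similarly, the $\mu$-filtration condition is not closed because ``the relevant orbit in the flag variety is closed'' in general; it works here because $\mu$ is minuscule, so the $G$-orbit is an entire connected component of the relevant Grassmannian. If you want to turn your outline into an actual proof, these are the points that need real work; the Grothendieck--Messing smoothness step is, as you say, essentially formal once representability is in hand.
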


 The canonical morphism $\Mfr_{G,\mu}(\bbf) \to \Mfr_{\GSp_{2g},\mu'}(\bbf')$ is a closed embedding (\cite[Prop.~3.2.11]{HowardPappas:GSpin}); we identify $\Mfr_{G,\mu}(\bbf)$ with its image. It is stable under the action of $J_b \subset J_{b'}$ by \cite[Thm.~3.2.1]{HowardPappas:GSpin}.  We will work with its underlying reduced subscheme $\Mscr_{G,\mu}(\bbf)$. 
 
 Moreover, $\Mscr_{G,\mu}(\bbf)$ is equipped with a morphism $\Theta_x\colon \Mscr_{G,\mu}(\bbf) \to \Sscr_G^\bbf$ for every $x \in \Jscr_\infty(\FFbar_p)$, which can be described as follows. For $x' \in \Jscr'_\infty(\FFbar_p)$ denote by $x_m'$ its image in $\Jscr'_m(\FFbar_p)$ and let 
 \[
  \Theta'_{x',m} := \pi_{N,m | \{x'_m\} \times \Mscr_{\GSp_{2g},\mu'}(\bbf')^{m_1,m_2}}\colon \Mscr_{\GSp_{2g},\mu'}(\bbf')^{m_1,m_2} \to \Ascr_g^{\bbf'}
 \]
 and define $\Theta'_{x'}:\Mscr_{\GSp_{2g},\mu'}(\bbf') \to \Ascr_g^{\bbf'}$ as their limit (see also \cite[Thm~6.21]{RapoportZink:RZspace}). If $x'$ is the image of a point $x \in \Jscr_\infty(\FFbar_p)$, then $\Theta'_{x' | \Mscr_{G,\mu}(\bbf)}$ factorizes through $\Sscr_G^-$. The morphism $\Theta_x$ is defined as the unique lift of $\Theta'_{x' | \Mscr_{G,\mu}(\bbf)}$ such that $\Theta_x^\ast (\Acal_G[p^\infty],\lambda_G,t_{G, \alpha})$ is the universal {\BTT} over $\Mscr_{G,\mu}(\bbf)$ (see \cite[\S~3.2]{HowardPappas:GSpin}, in particular Lemma~3.2.6 and Remark 3.2.14 for existence, \cite[Prop.~1.3.11]{Kisin:LanglandsRapoport} for uniqueness). Note that the restriction $\Theta_{x,m}$ of $\Theta_x$ to $\Mscr_{G,\mu}(\bbf)^{m_1,m_2} := \Mscr_{G,\mu}(\bbf) \cap \Mscr_{\GSp_{2g},\mu'}^{m_1,m_2}$ depends only on the image of $x$ in $\Jscr_m(\FFbar_p)$. In the following, we will also use the notion $\Theta_{x,m}$ for $x \in \Jscr_m(\FFbar_p)$.

The assumption that $X_0$ is completely slope divisible was not used in the construction of ${\rm RZ_{X_0,\lambda_0,t_{0,\alpha}}}$ above (nor was it used in \cite{HowardPappas:GSpin}). However, note that the isomorphism class of ${\rm RZ_{X_0,\lambda_0,t_{0,\alpha}}}$ does only depend on the choice of $(X_0,\lambda_0,t_{0,\alpha})$ inside its isogeny class; thus we can make this assumption without losing generality. Indeed, if $(X,\lambda,t_\alpha)$ is a {\BTT} over $\FFbar_p$ such that its Dieudonn\'e crystal is isomorphic to $(M_{\breve\ZZ_p},\psi \otimes 1, s_\alpha \otimes 1)$, choose an isogeny $\varphi\colon (X,\lambda,t_\alpha) \to (X_0,\lambda_0,t_{\alpha,0})$ with $X_0$ completely slope divisible. Then we have a canonical isomorphism
 \begin{eqnarray*}
   {\rm RZ}_{X_0,\lambda_0,t_{0,\alpha}} &\isom&  {\rm RZ}_{X,\lambda,t_\alpha}\\ 
  \rho & \mapsto & \rho \circ \varphi,
 \end{eqnarray*}
 in particular we can identify their underlying reduced subschemes. Moreover, for any pair $z = (z_0,i)$ with $z_0 \in \Sscr_G^b(\FFbar_p)$ and $i\colon (X,\lambda, t_\alpha) \isom (\Acal_{G,z_0}[p^\infty],\lambda_{G,z_0},t_{G,\alpha,z_0})$ the characterizing property of $\Theta_z$ implies
 \begin{equation} \label{eq theta}
  \Theta_z = \Theta_x
 \end{equation}
 where $x \in \Jscr_\infty(\FFbar_p)$ given as follows. Its image in $\Sscr_{G,0}$ equals $\Theta_z(\varphi)$ and we choose the isomorphism such that
 \begin{center}
  \begin{tikzcd}
   X \arrow{rr}{\sim}[swap]{i} \arrow[two heads]{d}{\varphi} & & \Acal_{G,z_0}[p^\infty] \arrow[two heads]{d} \\
   X_0 \arrow[dashed]{r}{\sim} & \Acal_{G,\Theta_z(\varphi)}  \arrow{r}{\sim} & \bigslant{\Acal_{G,z_0}}{\ker \varphi}
  \end{tikzcd}
 \end{center}
 commutes.

\subsection{Construction of the correspondence}

 Let $\pi_{N,m}^-\colon \Jscr_m^{(p^{-N})} \times \Mscr_{G,\mu}(\bbf)^{m_1,m_2} \to \Ascr_{g,0}^{\bbf'}$ be the pullback of $\pi_{N,m}'$. We define a lift to $\Sscr_G^\bbf$ on $\FFbar_p$-points by
 \[
  \pi_{N,m}(x,y) = \Theta_{x,m}(y).
 \]
 We already know that the restriction of $\pi_{N,m}$ to $\{x\} \times \Mscr_{G,\mu}(\bbf)^{m_1,m_2}$ is a morphism of varieties. As the next step, we show that $H_{y,N} := \pi_{N,m |\Jscr_m^{(p^{-N})} \times \{y\}}$ is morphism of varieties. For this, we study the possible lifts of a morphism with normal source to the normalisation of the target.
 
 \begin{proposition} \label{prop BC birational}
  Let $\nu\colon \widetilde{S} \to S$ be an integral birational morphism of schemes. Given a quasi-compact morphism $f:T \to S$ with $T$ irreducible, we fix an irreducible component $T_0$ of $\widetilde{T} := T \times_S \widetilde{S}$ which maps surjectively onto $T$. Then the restriction $\nu_0:T_0 \to T$ of the pull-back $\nu_T$ of $\nu$ is again integral and birational. Moreover, we have a bijection
  \begin{eqnarray*}
   \{T_0 \subset \widetilde{T} \textnormal{ irred.\ comp.} \mid \nu(T_0) = T \} &\leftrightarrow& \{T' \subset \widetilde{S} \textnormal{ irreducible, closed} \mid \nu(T') = \overline{f(T)}\} \\
   T_0 &\mapsto& \overline{f_{\widetilde{S}}(T_0)}
  \end{eqnarray*}
 \end{proposition}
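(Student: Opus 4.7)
The plan is to reduce to the case where $S$ and $\widetilde S$ are irreducible with a common generic point, then handle the two assertions separately: integrality is routine base change, while birationality and the bijection both flow from an analysis at the generic point of $T$.

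First I would reduce to $S$ irreducible. Since $T$ is irreducible and $f$ is continuous, $\overline{f(T)}$ is contained in a unique irreducible component $S_0 \subseteq S$. Because $\nu$ is integral and birational, its restriction to the corresponding component $\widetilde S_0 \subseteq \widetilde S$ is again integral and birational, and every $T_0$ or $T'$ occurring in the statement lives over $S_0$ (respectively inside $\widetilde S_0$). Replacing $S, \widetilde S$ by $S_0, \widetilde S_0$ therefore does not affect any of the assertions, and I may assume both are irreducible.

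Integrality of $\nu_0$ is then immediate: integrality is preserved under arbitrary base change, so $\nu_T: \widetilde T \to T$ is integral; and the restriction of an integral morphism to a closed subscheme of its source is again integral, which covers $T_0 \hookrightarrow \widetilde T$.

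For birationality I would pass to the generic point $\eta_T$ of $T$, set $\eta := f(\eta_T) \in S$, and examine the fiber $\widetilde T_{\eta_T} = \widetilde S_\eta \otimes_{\kappa(\eta)} \kappa(\eta_T)$. Its minimal primes are in bijection with the irreducible components of $\widetilde T$ surjecting onto $T$; the one distinguishing $T_0$ gives a generic point $\tau$ of $T_0$ lying over $\eta_T$, and birationality amounts to $\kappa(\tau) = \kappa(\eta_T)$. \emph{This is the step I expect to be the main obstacle}: the dense open $U \subset S$ on which $\nu$ is an isomorphism need not meet $\overline{f(T)}$, so the isomorphism cannot simply be transported under base change. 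The approach I would take is to exploit that $\widetilde S_\eta$ is reduced and zero-dimensional over $\kappa(\eta)$ (by integrality of $\nu$ together with birationality on the containing component), so that tensoring with $\kappa(\eta_T)$ preserves the structure of the fiber, and the hypothesis that $T_0$ surjects onto $T$ isolates precisely the component whose residue field at $\tau$ coincides with $\kappa(\eta_T)$.

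Finally, for the bijection in (b), the map $T_0 \mapsto T' := \overline{f_{\widetilde S}(T_0)}$ is well-defined: the target is irreducible as the closure of the image of an irreducible set, and closedness of $\nu$ (a consequence of integrality) gives $\nu(T') = \overline{\nu(f_{\widetilde S}(T_0))} = \overline{f(\nu_T(T_0))} = \overline{f(T)}$. For the inverse, given $T' \subset \widetilde S$ with $\nu(T') = \overline{f(T)}$, I would consider $f_{\widetilde S}^{-1}(T') = T \times_S T' \subset \widetilde T$; its image in $T$ equals $f^{-1}(\overline{f(T)}) = T$, so by irreducibility of $T$ some irreducible component surjects onto $T$, and the previous generic-fiber analysis shows this component is unique. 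Mutual inversion of the two constructions then reduces to a direct check at the generic point of $T$.
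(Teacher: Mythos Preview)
Your generic-fibre framework is sound, and you correctly identify the obstacle, but your proposed resolution contains a real error: the fibre $\widetilde{S}_\eta$ need not be reduced. Take $S = \Spec k[t^2,t^3]$, $\widetilde{S} = \Spec k[t]$ its normalisation, and $\eta$ the cusp; then $\widetilde{S}_\eta \cong \Spec k[t]/(t^2)$. Worse, even if the fibre were reduced and zero-dimensional, that would not give what you actually need, namely that every point of $\widetilde{S}_\eta$ has residue field \emph{equal} to $\kappa(\eta)$; a reduced Artinian $\kappa(\eta)$-algebra can perfectly well have residue fields strictly larger than $\kappa(\eta)$. Without this residue-field equality, the sentence ``the hypothesis that $T_0$ surjects onto $T$ isolates precisely the component whose residue field at $\tau$ coincides with $\kappa(\eta_T)$'' is empty: nothing in your outline shows such a component exists.

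The paper supplies exactly this missing ingredient, via a different reduction. Rather than reducing to $S$ irreducible, it factors $f$ as $T \to W := \overline{f(T)} \hookrightarrow S$ and treats the closed-immersion step by direct algebra: writing locally $S = \Spec R$, $\widetilde{S} = \Spec A$, birationality gives $A \subset \Frac(R)$, so for any prime $\Pfr \subset A$ lying over $\pfr \subset R$ every element of $Q(A/\Pfr)$ is visibly a ratio of elements of $R/\pfr$, whence $Q(A/\Pfr) = Q(R/\pfr)$. In your language this says precisely that each point of $\widetilde{S}_\eta$ has residue field $\kappa(\eta)$. Once you insert this computation in place of the reducedness claim, your argument for the bijection (which is spelled out more explicitly than the paper's, where it is left implicit in the two-case decomposition) goes through.
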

 
 \begin{proof}
 As the property of being integral is stable under base change and composition, $\nu_0$ is integral. To show birationality, consider the scheme-theoretic images $W$ and $W_0$ of $T$ in $S$ and $T_0$ in $\widetilde{S}$, respectively. Then $W_0$ is an irreducible component of $W \times_S \widetilde{S}$ which maps surjectively onto $W$. 
 Thus it suffices to show the assertion in the following two cases.
 \begin{assertionlist}
  \item $T \to S$ is a closed immersion
  \item $T \to S$ is dominant.
 \end{assertionlist}
 Then (a) proves that $W_0 \to W$ is integral, birational and substituting $S=W, \widetilde{S}=W_0$ in (b) implies our assertion.
 
 In the first case, assume without loss of generality that $T$ and $S$ are reduced and affine, say $S = \Spec R$, $\widetilde{S} = \Spec{A}$ and $T = \Spec R/\pfr$.  Then $T_0 = \Spec A/\Pfr$ where $\Pfr$ is a prime lying over $\pfr$. As $\nu_0$ is clearly dominant, it suffices to show that the induced  morphism of fraction fields $Q(R/\pfr) \mono Q(A/\Pfr)$ is surjective. Indeed, for any $\frac{r \mod \Pfr}{s \mod \Pfr} \in Q(A/\Pfr)$ write $r = \frac{r_1}{r_2} \mod \Pfr, s = \frac{s_1}{s_2} \mod \Pfr$ with $r_1,r_2,s_1,s_2 \in R$; now
 \[
  \frac{r \mod \Pfr}{s \mod \Pfr} = \frac{r_1/r_2 \mod \Pfr}{s_1/s_2 \mod \Pfr} = \frac{r_1s_2 \mod \Pfr}{s_1r_2 \mod \Pfr} \in Q(R/\pfr).
 \] 
 
 In the second case, the generic fibre of $\widetilde{T}$ over $T$ is the base change of the generic fibre of $\widetilde{S}$ over $S$. As the $\nu$ induces an isomorphism on the generic fibre, so does $\nu_T$. Thus, there exists a unique top-dimensional component $T_0$ of $\widetilde{T}$ and $\nu_0$ is birational.
 \end{proof}
 
 \begin{corollary} \label{cor lift to normalisation}
  Let $f\colon T \to S$ be a quasi-compact morphism of schemes and assume that $T$ is normal. Let $\nu:\widetilde{S} \to S$ be an integral birational morphism. Then we have a one-to-one correspondence.
  \begin{eqnarray*}
   \{\tilde{f}:T \to \widetilde{S} \mid \nu\circ\tilde{f} = f \} &\leftrightarrow& \{(T'_C)_{C \subset T \textnormal{ conn. comp.}} \mid T'_C \subset \widetilde{S} \textnormal{ irred.\ closed}, \nu(T'_C) = \overline{f(C)}\} \\
   \tilde{f} &\mapsto& (\overline{\tilde{f}(C)})
  \end{eqnarray*}
 \end{corollary}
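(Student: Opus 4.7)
The plan is to reduce to the irreducible case and then exploit a very standard property of normal schemes. Concretely, since $T$ is normal (and, as elsewhere in the paper, locally Noetherian), every connected component $C$ of $T$ is irreducible; as both sides of the claimed bijection split as products over the connected components, it suffices to construct the bijection one $C$ at a time. Fixing such a $C$, I would apply Proposition~\ref{prop BC birational} to $f|_C: C \to S$: this gives a bijection between irreducible components of $C \times_S \widetilde{S}$ mapping surjectively onto $C$ and irreducible closed subsets $T' \subset \widetilde{S}$ with $\nu(T') = \overline{f(C)}$, and asserts in addition that for each such component $C_0$ the restriction $\nu_0: C_0 \to C$ of the projection is integral and birational.

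The key input is the standard fact that an integral, birational morphism $\nu_0: C_0 \to C$ whose target is a normal integral scheme is automatically an isomorphism. Locally, this reduces to the statement that if $A$ is a normal domain and $A \hookrightarrow B$ is an integral extension of domains inducing an equality of fraction fields, then $A = B$ (every element of $B$ is integral over $A$ and lies in $\mathrm{Frac}(A)$, hence in $A$). Applied to $\nu_0$, this produces a section $\nu_0^{-1}: C \to C_0$, and composing with the projection $C \times_S \widetilde{S} \to \widetilde{S}$ gives a lift $\tilde{f}|_C: C \to \widetilde{S}$ of $f|_C$ whose scheme-theoretic image has closure $T'$.

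Conversely, given a lift $\tilde{f}$, the graph section $s := (\id_C, \tilde{f}|_C): C \to C \times_S \widetilde{S}$ is a closed immersion (since $\widetilde{S} \to S$ is separated, being integral) whose image is irreducible and maps isomorphically onto $C$, hence is an irreducible component $C_0$ of $C \times_S \widetilde{S}$ surjecting onto $C$; its image in $\widetilde{S}$ has closure $\overline{\tilde{f}(C)}$. The two constructions are manifestly inverse to each other through the bijection of Proposition~\ref{prop BC birational}, and assembling over the connected components of $T$ yields the stated correspondence.

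The only nontrivial ingredient is the normal-target criterion for an integral birational morphism to be an isomorphism, and this is elementary; everything else is a direct application of Proposition~\ref{prop BC birational} and the bookkeeping of splitting off connected components.
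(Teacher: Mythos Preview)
Your proof is correct and follows essentially the same route as the paper's: reduce to a single irreducible (equivalently, connected) component, translate lifts $\tilde f$ into sections of $\widetilde T = T\times_S \widetilde S$ via the universal property, and then use Proposition~\ref{prop BC birational} together with the fact that an integral birational morphism onto a normal integral scheme is an isomorphism to identify such sections with the dominating irreducible components of $\widetilde T$, hence with the irreducible closed $T'\subset\widetilde S$ over $\overline{f(C)}$. Your added justifications (the algebraic argument for ``integral birational onto normal $\Rightarrow$ isomorphism'' and the separatedness of $\nu$ making the graph closed) are welcome elaborations, but the argument is the same as the paper's.
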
 
 \begin{proof}
  Assume without loss of generality that $T$ is irreducible and denote $\widetilde{T}:= T \times_S \widetilde{S}$. We have a sequence of bijections
  \begin{eqnarray*}
    \{\tilde{f}:T \to \widetilde{S} \mid \nu\circ\tilde{f} = f \}  &\leftrightarrow& \{s\colon T \to \widetilde{T} \mid \nu_T \circ s = \id \} \\
    &\leftrightarrow& \{T_0 \subset T \textnormal{ irreducible component} \mid \nu_T(T_0) =T \} \\
    &\leftrightarrow& \{T' \subset \widetilde S \textnormal{ irreducible,\ closed} \mid \nu(T') = \overline{f(T)} \}.
  \end{eqnarray*}
  The first bijection is given by the universal property of the fibre product $T \times_S \widetilde{S}$. For the second bijection note that for every  $T_0 \subset \widetilde{T}$ of the right hand side, the restriction $T_0 \to T$ is integral and birational by the previous proposition and hence an isomorphism as $T$ is normal. In particular there exists a unique section $T \to T_0$. The third bijection is the last assertion of the previous proposition.
 \end{proof} 
 
 As $\Jscr_m$ is smooth and thus in particular normal, we can apply the last corollary to construct a lift $H_{y,N,(x_J)}$ of $\pi^-_{N,m|\Jscr_m^{(p^{-N})} \times \{y\}}$ as follows: For each connected component $J \subset \Jscr_m^{(p^{-N})}$ we fix a closed point $x_J \in J$. Let $x_J^+ := \pi_{N,m}(x_J,y)$ and $x_J^- := \nu_{\rm Sh} (x_J^+)$. Using the identification of Proposition~\ref{prop local-global},  $\pi_{N,m}^-$ maps $\Jscr_{x_J}^\wedge \times \{y\}$ onto the central leaf of $\Def(\Acal^\univ[p^\infty]_{x_J^-},\lambda^\univ_{x_J^-}, t_{G,\alpha,x_J^+})$ by \cite[Prop.~4.6]{Hamacher:DeforSpProdStr}. Thus $\pi_{N,m}^-(J \times \{y\})$ is its closure in $\Sscr_{G,0}^-$, which is the image of the closure $C(x_J)$ of the central leaf in $\Def(\Acal_G[p^\infty]_{x_J^+},\lambda_{G,x_J^+}, t_{G,\alpha,x_J^+})$ in $\Sscr_{G,0}$. Note that $C(x_J)$ is the connected component of a central leaf containing $x_J^+$. By Corollary~\ref{cor lift to normalisation} the family $(C(x_J))_{J \subset \Jscr_m \textnormal{ irr.\ comp.}}$ yields a lift $H_{y,N,(x_J)}$ of $\pi^-_{N,m|\Jscr_m^{(p^{-N})} \times \{y\}}$. 
 
 \begin{proposition} \label{prop H is a morphism}
  Let $\rho_y$ be the quasi-isogeny corresponding to $y \in \Mscr_{G,\mu}(\bbf)(\FFbar_p)$ and denote by $\rho_{Ig,y}$ the quasi-isogeny given by
  \[
   p^{m_1}\rho_{Ig,y}\colon \Acal_{G,\Jscr_M^{(p^{-N})}}[p^\infty] \epi \bigslant{\Acal_{G,\Jscr_M^{(p^{-N})}}[p^\infty]}{j_m(\ker (p^{m_1}\rho_y))} = H_{y,N,(x_J)}^\ast \Acal_G[p^\infty].
  \]
  \begin{subenv}
   \item The family of crystalline Tate tensors $\rho_{Ig,y}^{\ast}t_{G,\alpha}$ coincide with the natural crystalline Tate tensors on $\Acal_{G,\Jscr_m^{(p^N)}}$ .  
   \item $H_{y,N,(x_J)} = H_{y,N}$, in particular the lift is independent of the choice of the points $x_J$.
  \end{subenv}
 \end{proposition}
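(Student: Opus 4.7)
The plan is to establish part~(i) by a direct comparison of crystalline Tate tensors via the moduli descriptions of the Igusa variety and the Rapoport--Zink space, and then to deduce part~(ii) from (i) using the tensor-theoretic characterisation of $\Sscr_G$ supplied by Proposition~\ref{prop local-global}.

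For part~(i), I would factor $\rho_{Ig,y}$ through the Igusa trivialisation $j_m$ and the universal Rapoport--Zink quasi-isogeny $\rho_y$. By Mantovan's splitting after the $F^N$-twist, the tautological Igusa trivialisation furnishes an isomorphism $X_{0,\Jscr_m^{(p^{-N})}}[p^m] \isom \Acal_G[p^m]_{\Jscr_m^{(p^{-N})}}$, and the quotient defining $H_{y,N,(x_J)}^* \Acal_G$ is by construction the pushout of the isogeny $p^{m_1}\rho_y : X_0 \to X_0/\ker(p^{m_1}\rho_y)$ along this trivialisation. Hence on contravariant Dieudonn\'e isocrystals we have $\rho_{Ig,y}^* = (j_m^*)^{-1} \circ \rho_y^*$. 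By the very definition of $\Jscr_\infty^{\perf}$ as the locus where $j^* t_{G,\alpha} = t_{0,\alpha}$, the trivialisation $j_m$ carries $t_{G,\alpha}$ to $t_{0,\alpha}$; by the Hodge-type Rapoport--Zink moduli description (\cite[Def.~2.3.3]{HP}), the universal $\rho_y^*$ sends $t_{G,\alpha}$ on $\Theta_x^* \Acal_G$ to $t_{0,\alpha}$. Composing, $\rho_{Ig,y}^* t_{G,\alpha}$ and the natural $t_{G,\alpha}$ on $\Acal_{G,\Jscr_m^{(p^{-N})}}$ agree at each $x_J$.

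To promote this pointwise identity into an identity of crystalline Tate tensors on all of $\Jscr_m^{(p^{-N})}$, I would use that both sides are horizontal sections of the locally free sheaf $\DD(\Acal_G)^\otimes[1/p]$ evaluated on the Zariski site of the smooth scheme $\Jscr_m^{(p^{-N})}$ (cf.\ \cite[\S~3.1.6]{HP}). On a connected smooth scheme, horizontal sections of a vector bundle with integrable connection are determined by their value at any single point; having chosen one $x_J$ per connected component, the two families must then coincide globally. Frobenius-equivariance is automatically inherited, so the common horizontal section is a crystalline Tate tensor, which is (i).

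For part~(ii), given (i), the two lifts $H_{y,N,(x_J)}$ and $H_{y,N,(x_J')}$ of $\pi_{N,m}^-$ induce on $\Acal_{G,\Jscr_m^{(p^{-N})}}$ the same crystalline Tate tensors at every $\FFbar_p$-point -- namely those determined via $\rho_{Ig,y}$, which is constructed without reference to the choice of $(x_J)$. By Proposition~\ref{prop local-global}(i), a lift to $\Sscr_G$ of a point in $\Sscr_G^-$ is uniquely determined by the tensors it induces, so the two lifts agree on $\FFbar_p$-points; since $\Jscr_m^{(p^{-N})}$ is reduced of finite type over $\FFbar_p$ and $\Sscr_G$ is separated, this forces $H_{y,N,(x_J)} = H_{y,N,(x_J')}$. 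The main obstacle I expect is the horizontality step in (i): one must verify that $\rho_{Ig,y}^* t_{G,\alpha}$ is horizontal, which follows from the fact that $p^{m_1}\rho_{Ig,y}$ is a genuine isogeny of $p$-divisible groups coming from a moduli construction, and hence its pullback is compatible with the Gauss--Manin connection after inverting $p$.
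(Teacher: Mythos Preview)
Your overall architecture matches the paper's proof exactly: for (i) verify the tensor identity at a single point $x_J$ in each connected component and then invoke rigidity/horizontality of crystals to propagate, and for (ii) appeal to Kisin's result that the tensors uniquely pin down the lift through the normalisation (this is Proposition~\ref{prop local-global}~(i), equivalently \cite[Prop.~1.3.11]{kisin}). The paper's proof is two sentences and says precisely this.

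One point to tighten. Your explicit factorisation ``$\rho_{Ig,y}^\ast = (j_m^\ast)^{-1}\circ\rho_y^\ast$ on Dieudonn\'e isocrystals'' is not well posed: $j_m$ is only an isomorphism of $p^m$-torsion group schemes, so it does not induce a map of isocrystals, and the tensor-matching condition $j^\ast t_{G,\alpha}=t_{0,\alpha}$ is formulated over $\Jscr_\infty^\perf$, not over $\Jscr_m^{(p^{-N})}$. What actually gives the equality at $x_J$ is purely the construction preceding the proposition: by definition $H_{y,N,(x_J)}(x_J)=x_J^+=\Theta_{x_J,m}(y)$, and $\Theta_x$ is \emph{characterised} as the unique lift with $\Theta_x^\ast(\Acal_G[p^\infty],\lambda_G,t_{G,\alpha})$ equal to the universal {\BTT} on $\Mscr_{G,\mu}(\bbf)$ (see \S\ref{ss RZ spaces}). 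This is what the paper means by ``equal over $x_J$ by construction''. Once you replace your factorisation paragraph by this observation, your horizontality argument (which is just the rigidity of crystals on the smooth base $\Jscr_m^{(p^{-N})}$) and your deduction of (ii) are correct and coincide with the paper.
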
  
 \begin{proof}
  By the rigidity of crystals it suffices to show the equality of the crystalline Tate tensors in one point of each connected component. Indeed, they are equal over $x_J$ by construction. The second part follows directly from \cite[Prop.~1.3.11]{Kisin:LanglandsRapoport}, where Kisin shows that the crystalline Tate tensors determine a point in a fibre of $\nu_{\rm Sh}$ uniquely.
 \end{proof}
 
 So far we have shown that the restriction of $\pi_{N,m}$ to subvarieties of the form $\Jscr_{(p^{-N})} \times \{y\}$ and $\{x\} \times \Mscr_{G,\mu}(\bbf)$ is a morphism of varieties. We now deduce the general result.  

 \begin{theorem}\label{thm almost product}
  $\pi_{N,m}$ is a morphism of varieties.
 \end{theorem}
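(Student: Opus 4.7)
The plan is to realise the set-theoretic map $\pi_{N,m}$ as an algebraic lift of the already algebraic Siegel morphism $\pi_{N,m}^-$ through the finite normalisation $\nu_{\rm Sh}: \Sscr_{G,0} \to \Sscr_{G,0}^-$. Write $T := \Jscr_m^{(p^{-N})} \times \Mscr_{G,\mu}(\bbf)^{m_1,m_2}$. Since the factor $\Mscr_{G,\mu}(\bbf)^{m_1,m_2}$ need not be normal, I first pass to the normalisation $\tilde T \to T$, construct the lift on $\tilde T$ using Corollary~\ref{cor lift to normalisation}, and then descend to $T$.

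Applied to $\tilde T \to T \xrightarrow{\pi_{N,m}^-} \Sscr_{G,0}^-$, Corollary~\ref{cor lift to normalisation} reduces the construction of a lift $\tilde\pi: \tilde T \to \Sscr_{G,0}$ to choosing, for each connected component $\tilde C$ of $\tilde T$, an irreducible closed subset of $\Sscr_{G,0}$ mapping surjectively onto $\overline{\pi_{N,m}^-(\tilde C)}$. I pin this choice down by demanding compatibility with the algebraic morphism $\Theta_{x,m}$ on a single vertical slice inside $\tilde C$: its image lies in a unique irreducible closed subset of $\Sscr_{G,0}$, and this is the chosen component. By Proposition~\ref{prop H is a morphism}, the resulting $\tilde\pi$ agrees set-theoretically with $\pi_{N,m} \circ (\tilde T \to T)$ and restricts to $H_{y,N}$ on each horizontal slice.

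The main obstacle is descending $\tilde\pi$ through the finite birational morphism $\tilde T \to T$. Working inside the finite fibre product $T' := T \times_{\Sscr_{G,0}^-} \Sscr_{G,0}$, the graph of each $H_{y,N}$ is a closed subscheme of $T'|_{\Jscr_m^{(p^{-N})} \times \{y\}}$ mapping isomorphically onto its slice. These slice-wise graphs assemble into a reduced closed subscheme $Z \subset T'$ whose projection $Z \to T$ is finite, birational, bijective on geometric points (by the well-definedness of the set-theoretic $\pi_{N,m}$), and an isomorphism on every horizontal slice. Since horizontal slices cover $T$, this forces $Z \to T$ to be an isomorphism, and composing its inverse with $Z \hookrightarrow T' \to \Sscr_{G,0}$ yields the desired algebraic $\pi_{N,m}$. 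The delicate point is verifying that the slice-wise graphs fit coherently into the single closed subscheme $Z$; here one invokes Proposition~\ref{prop H is a morphism}~(i), which identifies the Tate tensors transported to the Abelian scheme over $T$ with the natural ones, so that Proposition~\ref{prop local-global}~(i) ensures the Tate-tensor data—and hence the lift—vary algebraically in both factors.
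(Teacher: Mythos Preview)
Your descent step has a genuine gap. The assertion that a finite, birational, bijective morphism $Z \to T$ which restricts to an isomorphism on every horizontal slice $\Jscr_m^{(p^{-N})} \times \{y\}$ must be an isomorphism is false in general. A model of the failure: take $T = \AA^1 \times C$ with $C$ a cuspidal cubic, $Z = \AA^1 \times \AA^1$, and the map $\id \times \nu$ with $\nu\colon \AA^1 \to C$ the normalisation. This is finite, birational, bijective, and an isomorphism on every slice $\AA^1 \times \{c\}$, yet not an isomorphism. Since you yourself observe that $\Mscr_{G,\mu}(\bbf)^{m_1,m_2}$ need not be normal, exactly this phenomenon is in play, and ``horizontal slices cover $T$'' does not force the conclusion. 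Your appeal to Proposition~\ref{prop local-global}~(1) does not help here: it is a pointwise statement distinguishing fibre points by their Tate tensors, and carries no infinitesimal or scheme-theoretic content.

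What is missing is precisely such infinitesimal control. The paper's argument works inside the fibre product $\widetilde T = T \times_{\Sscr_{G,0}^-} \Sscr_{G,0}$ and shows that the projection $\nu_T$ is \emph{\'etale} at every $s(x,y)$ by proving it induces an isomorphism of formal neighbourhoods; this relies on the local almost product structure in deformation spaces \cite[Prop.~4.5]{hamacher}, which ensures that $\pi_{N,m}^-$ on the formal neighbourhood of $(x,y)$ already lands in the correct branch $\Sscr_{G,\pi_{N,m}(x,y)}^\wedge \subset \Sscr_{G,\pi_{N,m}^-(x,y)}^{-\,\wedge}$. With \'etaleness established, the remaining issue is a bijection of irreducible components of the stalks, and here the paper uses \emph{both} the horizontal morphisms $H_{y,N}$ and the vertical morphisms $\Theta_{x,m}$: starting from a smooth point one moves first along one slice, then the other, to pin down a unique irreducible component of $\widetilde T$ over each $Z_1 \times Z_2$. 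Your proposal invokes neither the formal neighbourhood computation nor this two-directional slice argument, and without at least one of them the descent through the possibly non-normal $\Mscr_{G,\mu}(\bbf)^{m_1,m_2}$ cannot be completed.
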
 
 \begin{proof}
 
  Denote $\widetilde{T} := (\Jscr_m^{(p^{-N})} \times \Mscr_{G,\mu}(\bbf)^{m_1,m_2}) \times_{\Sscr_G^-} \Sscr_G$ and let $\nu_T\colon \widetilde{T} \to \Jscr_m^{(p^{-N})} \times \Mscr_{G,\mu}(\bbf)^{m_1,m_2}$ be the canonical projection. Via the universal properties of the  fibre product, $\pi_{N,m}$ induces a (set-theoretical) section $s\colon (\Jscr_m^{(p^{-N})} \times \Mscr_{G,\mu}(\bbf)^{m_1,m_2})(\FFbar_p) \to \widetilde{T}(\FFbar_p)$. We have to show that $s$ is a morphism of varieties.
  
  For this it suffices to show that $\nu_T$ is locally an isomorphism at $s(x,y)$ for every $(x,y) \in  (\Jscr_m^{(p^{-N})} \times \Mscr_{G,\mu}(\bbf)^{m_1,m_2})(\FFbar_p)$. Indeed, in this case it there exists a unique section (in the set- and scheme-theoretic sense) in a neighbourhood $U$ of $(x,y)$ mapping $(x,y) \mapsto s(x,y)$. By uniqueness this section must be given by $s_{|U}$. As these neighbourhoods cover $\Jscr_m^{(p^{-N})} \times \Mscr_{G,\mu}(\bbf)^{m_1,m_2}$, the claim follows.
  
  First, we show that $\nu_T$ induces an isomorphism of formal neighbourhoods at $s(x,y)$, i.e. it is \'etale at $s(x,y)$.
  For this it suffices to show that the restriction $\pi_{N,m}^-$ to the formal neighbourhood of $(x,y)$ in $\Jscr_m^{(p^{-N})} \times \Mscr_{G,\mu}(\bbf)^{m_1,m_2}$ factorizes through $\Sscr_{G\, \pi_{N,m}(x,y)}^\wedge \subset \Sscr_{G\, \pi_{N,m}^- (x,y)}^{-\, \wedge}$. Since the formal neighbourhood in a Igusa variety is identified with the central leaf in the deformation space under the isomorphism of Proposition~\ref{prop local-global}, this follows from \cite[Prop.~4.5]{Hamacher:DeforSpProdStr}.
  
  Thus it suffices to show the restriction of $\nu_T$ to the stalk at $s(x,y)$ induces a birational morphism, as any finite \'etale  birational morphism is an isomorphism. By Proposition~\ref{prop BC birational} the restriction of $\nu_T$ to irreducible components is birational, so it suffices to show that $\nu_T$ induces a bijection of irreducible components of the stalks. This is obvious over the smooth locus as in this case the irreducible components of the stalk are in canonical one-to-one correspondence with the irreducible components of the formal neighbourhood. For the general case we need the following result.  
  
  Let $Z_1 \subset \Jscr_m^{(p^{-N})}$, $Z_2 \subset \Mscr_{G,\mu}(\bbf)^{m_1,m_2}$ be irreducible components. Then there exists a unique irreducible component of $\widetilde{T}$ containing $s(Z_1(\FFbar_p) \times Z_2(\FFbar_p))$. Indeed, let $z_1 \in Z_1(\FFbar_p)$ and $z_2 \in Z_2(\FFbar_p)$ be smooth points. As $s$ defines an isomorphism of varieties in a neighbourhood of $(z_1,z_2)$, its image $s(z_1,z_2)$ is also a smooth point and thus contained in a unique irreducible component $Z \subset \widetilde{T}$. Since the restriction $s_{|Z_1(\FFbar_p) \times \{z_2\}}$ is a morphism of varieties we have $s(Z_1(\FFbar_p) \times \{z_2\}) \subset Z(\FFbar_p)$. Now $s(Z_1(\FFbar_p) \times \{z_2\})$ lies in the smooth locus, as $Z_1$ is smooth. Thus we can apply the same argument again, this time varying the second coordinate, and obtain $s(Z_1(\FFbar_p) \times Z_2(\FFbar_p)) \subset Z(\FFbar_p)$.
  
  Denote by $\Irr(\cdot)$ the set of irreducible components of a scheme. Consider the diagram
  \begin{center}
  \begin{tikzcd}
   \Irr(\widetilde{T}_{s(x,y)}^\wedge) \arrow[two heads]{r} \arrow{d}{\rotatebox[origin=c]{-90}{$\sim$}} & \Irr(\widetilde{T}_{s(x,y)}) \arrow[bend right, two heads]{d} \\ \Irr((\Jscr_m^{(p^{-N})} \times \Mscr_{G,\mu}(b)^{m_1,m_2})_{(x,y)})^\wedge) \arrow[two heads]{r} & \Irr((\Jscr_m^{(p^{-N})} \times \Mscr_{G,\mu}(b)^{m_1,m_2})_{(x,y)}) \arrow[bend right]{u}
  \end{tikzcd}
  \end{center}
  where the right most arrow is the section which maps $(Z_1,Z_2)$ to $Z$ in the above construction. As both squares commute, one sees easily that the right arrows must be bijections.
 \end{proof}

 The geometric properties of $\pi_{N,m}$ follow readily from the analogue properties of $\pi'_{N,m}$.

 \begin{proposition}
  $\pi_{N,m}$ is finite, and surjective for $m$ big enough.
 \end{proposition}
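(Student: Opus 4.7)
The plan is to deduce both assertions from the corresponding properties of $\pi'_{N,m}$ established in Proposition~\ref{prop geometric properties of mantovans morphism}, combined with the characterising property of $\Theta_x$ expressed by equation~(\ref{eq theta}).

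For finiteness, I would use that $\nu_{\rm Sh} \circ \pi_{N,m} = \pi^-_{N,m}$, where $\pi^-_{N,m}$ is finite as the restriction of the finite morphism $\pi'_{N,m}$ to a closed subvariety of its source. Since $\nu_{\rm Sh}$ is finite and in particular separated, the graph of $\pi_{N,m}$ defines a closed immersion of $\Jscr_m^{(p^{-N})} \times \Mscr_{G,\mu}(\bbf)^{m_1,m_2}$ into the fibre product $(\Jscr_m^{(p^{-N})} \times \Mscr_{G,\mu}(\bbf)^{m_1,m_2}) \times_{\Sscr_G^-} \Sscr_G$, and the second projection from this fibre product to $\Sscr_G$ is finite as a base change of $\pi^-_{N,m}$. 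Their composition equals $\pi_{N,m}$, so $\pi_{N,m}$ is finite.

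For surjectivity I would first establish the pointwise statement. Given $z \in \Sscr_G^\bbf(\FFbar_p)$, the {\BTT} $(\Acal_{G,z}[p^\infty], \lambda_{G,z}, (t_{G,\alpha,z}))$ lies in the same $\sigma$-conjugacy class $\bbf$ as $(X_0, \lambda_0, (t_{0,\alpha}))$, so there exists a quasi-isogeny $\varphi: \Acal_{G,z}[p^\infty] \to X_0$ respecting the polarisation and all the crystalline Tate tensors. Setting $y := \varphi^{-1}$ yields a point of $\Mscr_{G,\mu}(\bbf)(\FFbar_p)$ which, for suitable $m_1, m_2$, lies in $\Mscr_{G,\mu}(\bbf)^{m_1,m_2}$. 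Applying equation~(\ref{eq theta}) with $X := \Acal_{G,z}[p^\infty]$ and $i := \id$ produces a point $x \in \Jscr_\infty(\FFbar_p)$ with $\Theta_x(y) = z$; writing also $x$ for its image in $\Jscr_m(\FFbar_p)$, we obtain $\pi_{N,m}(x,y) = z$ for every $m \geq m_1 + m_2$.

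To upgrade the pointwise result to uniform surjectivity in $m$ I would appeal to Noetherianity. The finiteness just proved guarantees that each image $\pi_{N,m}(\Jscr_m^{(p^{-N})} \times \Mscr_{G,\mu}(\bbf)^{m_1,m_2})$ is closed in $\Sscr_G^\bbf$; the compatibilities of Proposition~\ref{prop compatability properties of mantovans morphism}, together with the monotonicity of the chosen sequences $(m_1)$ and $(m_2)$, show that these images form an ascending chain in $m$ whose union contains every $\FFbar_p$-point of $\Sscr_G^\bbf$. Since $\Sscr_G^\bbf$ is of finite type over $\FFbar_p$, hence Noetherian, the chain stabilises, so $\pi_{N,m}$ is surjective for $m$ sufficiently large. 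The main subtlety is in the pointwise step, namely producing a quasi-isogeny $\varphi$ that respects the full family of crystalline Tate tensors; this is precisely the content of the definition of the Newton stratum $\Sscr_G^\bbf$ together with the non-emptiness of $\Mfr_{G,\mu}(\bbf)$.
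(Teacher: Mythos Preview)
Your finiteness argument is the same as the paper's: the paper says ``finiteness is stable under cancellation'', and your graph argument is precisely the standard proof of that cancellation property.

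For surjectivity, the pointwise step is also identical to the paper's --- pick a tensor-compatible quasi-isogeny $\varphi$ and invoke equation~(\ref{eq theta}). The genuine difference is in how you pass from pointwise to uniform surjectivity. The paper appeals to \cite[Lemma~4.4]{scholze13}, which gives an explicit bound: $\varphi$ can always be chosen with $\varphi^{-1} \in \Mscr_{G,\mu}(\bbf)^{0,m}$ for an $m$ independent of $z$. You instead observe that the images of the $\pi_{N,m}$ form an ascending chain of closed subsets whose union contains every closed point, and conclude by Noetherianity and Jacobson-ness of $\Sscr_{G,0}^\bbf$. Your route is more self-contained (no external input beyond what is already in the paper), while the paper's route is effective and gives a concrete $m$. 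One small correction: the compatibilities you need for the ascending-chain statement are those of $\pi_{N,m}$, not of $\pi'_{N,m}$; these are only recorded later as Proposition~\ref{prop compatibility of almost product structure}, but they are immediate from $\pi_{N,m}(x,y) = \Theta_{x,m}(y)$, so you could simply check them on $\FFbar_p$-points here.
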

 \begin{proof}
  Let $z \in \Sscr_G(\FFbar_p)$ be arbitrary. We choose an isogeny $\varphi\colon (\Acal_{G,z},\lambda_{G,z},t_{G,\alpha,z}) \to (X_0,\lambda_0,t_{0,\alpha}) $; by \cite[Lemma~4.4]{Scholze:LKModCurve} we can
 choose $\varphi$ such that $\varphi^{-1} \in \Mscr_{G,\mu}(\bbf)^{0,m}$ where $m$ does not depend on $z$. By (\ref{eq theta}) we have
 \[
  z = \Theta_x(\varphi^{-1})
 \]
 for some $x \in \Jscr_{m}$. The finiteness from Proposition~\ref{prop geometric properties of mantovans morphism}. As $\pi_{N,m}'$ is finite, so is $ \pi_{N,m}^-$.  Thus $\pi_{N,m}$ is finite as finiteness is satisfies the cancellation property (see for example \cite[Tag~035D, (a)]{AlgStackProj}).
 \end{proof}

 \begin{proposition} \label{prop compatibility of almost product structure}
  \begin{subenv}
   \item $\pi_{N,m} \circ (F_{\Jscr_m/k} \times \id) = \pi_{N+1,m}$
   \item Let $r_{m+1,m}\colon \Jscr_{m+1} \epi \Jscr_m$ be the canonical projection. Then $\pi_{N,m-1} \circ (r_{m+1,m} \times \id) = \pi_{N,m}$, where its is assumed that $m_1+m_2 \leq m-1$.
   \item Let $\iota\colon \Mscr_{G,\mu}(\bbf)^{m_1',m_2'} \mono \Mscr_{G,\mu}(\bbf)^{m_1,m_2}$ the obvious embedding for $m_1' \leq m_1, m_2' \leq m_2$. Then $\pi_{N,m,m_1,m_2} \circ \iota = \pi_{N,m',m_1',m_2'}$.
  \end{subenv}
 \end{proposition}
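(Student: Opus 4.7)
The plan is to verify each identity on $\FFbar_p$-points and then invoke the fact that a morphism from a reduced $\FFbar_p$-scheme of finite type to a separated $\FFbar_p$-scheme is determined by its values on $\FFbar_p$-points. This reduction is legitimate because Theorem~\ref{thm almost product} has already shown that every $\pi_{N,m}$ is a morphism of varieties with source satisfying the needed hypotheses, and the target $\Sscr_G^\bbf$ is quasi-projective, hence separated.

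On $\FFbar_p$-points, the pointwise description $\pi_{N,m}(x,y)=\Theta_{x,m}(y)$ reduces each identity to a statement about the family $(\Theta_{x,m})$ discussed in Section~\ref{ss RZ spaces}. Claim (iii) is the content of the compatibility of the $\Theta_{x,m}$ across strata $\Mscr_{G,\mu}(\bbf)^{m_1',m_2'}\subset\Mscr_{G,\mu}(\bbf)^{m_1,m_2}$ by which $\Theta_x$ was defined in the first place. Claim (ii) amounts to the observation, recorded in Section~\ref{ss RZ spaces}, that on $\Mscr_{G,\mu}(\bbf)^{m_1,m_2}$ the map $\Theta_{x,m}$ depends on $x$ only through its image in $\Jscr_m$ as soon as $m\geq m_1+m_2$. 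Finally, claim (i) follows once one notes that for $x\in\Jscr_m^{(p^{-N-1})}(\FFbar_p)$ the two natural images in $\Jscr_m(\FFbar_p)$ — namely $F_{p^N}\circ F_{\Jscr_m/k}$ applied to $x$ on the one hand, and $F_{p^{N+1}}$ applied to $x$ on the other — agree by the factorisation of the total Frobenius, so both sides of (i) equal $\Theta_{F_{p^{N+1}}(x),m}(y)$.

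As a sanity check (and an alternative route) one can proceed globally by composing each identity with the normalisation $\Sscr_G^\bbf\to\Sscr_G^{-,\bbf'}$: after this composition every identity collapses to the analogous identity for $\pi^-_{N,m}$, which is the pullback of $\pi'_{N,m}$, and therefore holds by Proposition~\ref{prop compatability properties of mantovans morphism}. Uniqueness of the lift along the normalisation is then delivered by Proposition~\ref{prop H is a morphism}: both sides pull the universal family $(\Acal_G,\lambda_G,t_{G,\alpha})$ back to isogenous data carrying the same crystalline Tate tensors, because Frobenius, the level-lowering map $r_{m+1,m}$, and the embedding $\iota$ all respect the tensor structure by construction.

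There is no substantive obstacle here; the only bookkeeping care required is the identification of the Frobenius twists on $\FFbar_p$-points in (i), which is automatic because $\FFbar_p$ is perfect and the relevant schemes are defined over $\FFbar_p$. The whole proposition is therefore a formal consequence of Theorem~\ref{thm almost product}, Proposition~\ref{prop H is a morphism}, and Proposition~\ref{prop compatability properties of mantovans morphism}.
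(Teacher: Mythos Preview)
Your argument is correct and follows essentially the same approach as the paper: reduce to $\FFbar_p$-points (where a morphism of reduced varieties over $\FFbar_p$ into a separated scheme is determined) and then invoke the pointwise identity $\pi_{N,m}(x,y)=\Theta_{x,m}(y)$, from which all three compatibilities are immediate. The paper states exactly this in two sentences; your additional alternative route via the normalisation and Proposition~\ref{prop compatability properties of mantovans morphism} is a legitimate sanity check but is not needed.
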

 \begin{proof}
  It suffices to check these properties on $\FFbar_p$ points. Here they are an easy consequence of the equality $\pi_{N,m}(x,y) = \Theta_x(y)$.
 \end{proof}
 
 \subsection{The almost product structure of infinite level}

 Let $J_b \subset J_b'$ the topological subgroup of elements which stabilise the family $(t_{0,\alpha})$. We view $J_b$ as locally profinite group scheme over $\FFbar_p$.

 \begin{proposition} \label{prop comparision of Igusa varieties}
  The canonical morphism $\Jscr_\infty^\perf \to  \Jscr_\infty'^\perf$ is a closed embedding with $J_b$-stable image. More explicitly, for any $g \in J_b$, $x=(x_0;j) \in \Jscr_\infty(\FFbar_p)$ the $J_b$ action is given by
  \begin{equation} \label{eq J_b action}
   g.x = (\Phi_x(g^{-1});(g^{-1})_\ast j).
  \end{equation}
 \end{proposition}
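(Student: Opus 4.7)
The plan is to factor $\Jscr_\infty^\perf \to \Jscr_\infty'^\perf$ as
\[
 \Jscr_\infty^\perf \hookrightarrow (\Jscr_\infty' \times_{C'} C)^\perf \to \Jscr_\infty'^\perf.
\]
The first arrow is a closed embedding by Proposition~\ref{prop igusa varieties hodge}~(a). The second arrow is the perfection of the base change of $C \to C'$ along $\Jscr_\infty' \to C'$; since $\Sscr_G \to \Sscr_G^- \hookrightarrow \Ascr_g \otimes O_E$ is the composition of a finite normalisation and a closed embedding, the map $C \to C'$ is finite, and hence so is the composition above.

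To upgrade ``finite'' to ``closed embedding'' I will check that the composition is a monomorphism, as any finite monomorphism is a closed immersion. Since both source and target are reduced perfect $\FFbar_p$-schemes, it suffices to verify injectivity on $\FFbar_p$-points. If $(x_{0,1};j_1), (x_{0,2};j_2) \in \Jscr_\infty^\perf(\FFbar_p)$ have the same image in $\Jscr_\infty'^\perf(\FFbar_p)$, then $j_1=j_2$ and the $x_{0,i}$ share a common image $x_0' \in C'(\FFbar_p)$. But then the Tate tensors at the two lifts agree, $t_{G,\alpha,x_{0,i}} = (j_i)_\ast t_{0,\alpha}$, so Proposition~\ref{prop local-global}~(a) forces $x_{0,1}=x_{0,2}$.

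For the $J_b$-stability and the explicit formula, I take $g \in J_b \subset J_{b'}$ and verify that the $J_{b'}$-action on $\Jscr_\infty'^\perf$ sends $\Jscr_\infty^\perf$ to itself. Given $x = (x_0;j) \in \Jscr_\infty^\perf$, the image $g.x \in \Jscr_\infty'^\perf$ has base point $(j_\ast g^{-1})(x_0) = x_0/j(\ker p^{m_2} g^{-1})$ and trivialisation $g^\ast j$. Chasing tensors through the commutative diagram that defines $g^\ast j$ on the level of rational Dieudonn\'e modules, and using that an isogeny induces an isomorphism of associated $F$-isocrystals (hence transports crystalline Tate tensors bijectively), the condition $(g^\ast j)^\ast t_{G,\alpha} = t_{0,\alpha}$ reduces to $g^\ast t_{0,\alpha} = t_{0,\alpha}$, which is precisely the defining property of $J_b$. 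Setting $\Phi_x(g^{-1}) := (j_\ast g^{-1})(x_0)$ and identifying $g^\ast j = (g^{-1})_\ast j$ in the conventions of the statement, this yields formula~(\ref{eq J_b action}).

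The main obstacle is the monomorphism check: $C \to C'$ is typically not itself a closed embedding, since distinct admissible Tate-tensor families on a single Dieudonn\'e module can give distinct points of $C$ above the same point of $C'$. What saves us is the presence of the trivialisation $j$, which pins down the admissible Tate tensor at the target as $j_\ast t_{0,\alpha}$; combined with the uniqueness statement of Proposition~\ref{prop local-global}~(a), this promotes set-theoretic injectivity to a scheme-theoretic monomorphism.
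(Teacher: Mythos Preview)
Your proof is correct and follows the same route as the paper: both establish injectivity on $\FFbar_p$-points by observing that the trivialisation $j$ forces $t_{G,\alpha,x}=j_\ast t_{0,\alpha}$ and then invoking Proposition~\ref{prop local-global}(1), and both verify the $J_b$-stability by the same tensor-chase through the defining diagram of $g^\ast j$.

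The one step you leave underjustified is the claim that for reduced perfect $\FFbar_p$-schemes, injectivity on $\FFbar_p$-points already gives a monomorphism. This does not follow from ``reduced and perfect'' alone. The paper makes the mechanism explicit: the Igusa varieties are Jacobson, so injectivity on closed geometric points yields universal injectivity; combined with finiteness this makes the map a universal homeomorphism onto its closed image; and then one invokes \cite[Lemma~3.8]{BS} (a universal homeomorphism of perfect schemes is an isomorphism) to conclude. Your packaging via ``finite monomorphism $\Rightarrow$ closed immersion'' is equivalent, but to complete it you should supply the Jacobson step (to pass from $\FFbar_p$-points to universal injectivity) and note that any morphism of perfect $\FF_p$-schemes is formally unramified (so universally injective implies monomorphism) --- or simply cite Bhatt--Scholze as the paper does. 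A minor aside: after the pro-limit and perfection the map is only integral, not literally finite; this is harmless since an integral monomorphism is still a closed immersion.
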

 \begin{proof}
  Let $x' = (\Acal,\lambda,\eta;j) \in \Jscr'_\infty(\FFbar_p)$, then any lift $x \in \Jscr_\infty(\FFbar_p)$ satisfies $t_{G,\alpha,x} = j_\ast t_{0,\alpha}$. Hence any such lift, if it exists, is unique by \cite[Cor.~1.3.11]{Kisin:LanglandsRapoport}. Thus $\Jscr_\infty \to \Jscr_\infty'$ is injective on closed geometric points and hence universally injective as the Igusa varieties are Jacobson. Since it is finite, $\Jscr_\infty \to \Jscr_\infty'$ is a universal homeomorphism onto its (closed) image. As a universal homeomorphism of perfect schemes is an isomorphism by \cite[Lemma~3.8]{BhattScholze:AffGr}, $\Jscr_\infty^\perf \to \Jscr_\infty'^\perf$ is a closed embedding. The second assertion follows from the fact that for any $g \in J_b'$, $x = (x_0;j) \in \Jscr_\infty(\FFbar_p)$ the $J_b'$-action is given by
  \[
   g.x = (\Phi'_x(g^{-1});(g^{-1})_\ast j),
  \]
  which is obviously lifted by (\ref{eq J_b action}).
 \end{proof}

 Let $\pi_\infty\colon \Jscr_\infty^{\perf} \times \Mscr_{G,\mu}(\bbf) \to \Sscr_{G,0}^\bbf$ be the limit of $\pi_{N,m}$ for $N,m \to \infty$. By Proposition \ref{prop compatibility of almost product structure} this is well-defined.

 \begin{proposition} \label{prop infinite almost product}
  The morphism $\pi_\infty^\perf$ is a $J_b$-torsor for the pro\'etale topology
 \end{proposition}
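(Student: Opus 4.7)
The plan is to mimic the proof of Proposition~\ref{prop almost product siegel infinite level}, lifting the argument from the Siegel case across the diagram of Igusa varieties and Rapoport--Zink spaces, with the group $J_b \subset J_{b'}$ playing the role of $J_{b'}$. Two things need to be checked: (i) $\pi_\infty^\perf$ is weakly \'etale, and (ii) the action map
\[
 \pr_2 \times a : J_b \times \bigl(\Jscr_\infty^\perf \times \Mscr_{G,\mu}(\bbf)^\perf\bigr) \lto \bigl(\Jscr_\infty^\perf \times \Mscr_{G,\mu}(\bbf)^\perf\bigr) \times_{\Sscr_{G,0}^{\bbf,\perf}} \bigl(\Jscr_\infty^\perf \times \Mscr_{G,\mu}(\bbf)^\perf\bigr)
\]
is an isomorphism.

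For (i), weak \'etaleness is local on the source, so I would cover $\Jscr_\infty^\perf \times \Mscr_{G,\mu}(\bbf)^\perf$ by quasi-compact opens of the form $\Jscr_\infty^\perf \times U$ with $U \subset \Mscr_{G,\mu}(\bbf)^{m_1,m_2,\perf}$. Setting $m = m_1+m_2$, the restriction of $\pi_\infty^\perf$ factors as
\[
 \Jscr_\infty^\perf \times U \lto \Jscr_m^{\perf} \times U \lto \Sscr_{G,0}^{\bbf,\perf},
\]
where the first arrow is pro\'etale by Proposition~\ref{prop igusa varieties hodge}(c) (and hence weakly \'etale), and the second is the perfection of a morphism shown to be \'etale in the course of the proof of Theorem~\ref{thm almost product} (via the identification of formal neighbourhoods of $\Jscr_m^{(p^{-N})} \times \Mscr_{G,\mu}(\bbf)^{m_1,m_2}$ with the central leaf in the corresponding deformation space, using \cite[Prop.~4.5]{hamacher}). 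A composition of weakly \'etale morphisms is weakly \'etale, yielding (i).

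For (ii), faithfulness of the $J_b$-action follows from the faithfulness of the $J_{b'}$-action in Proposition~\ref{prop almost product siegel infinite level}(b). For surjectivity, let $S$ be a perfect scheme and let $P_1 = (\Acal_1,\lambda_1,\eta_1,j_1;\rho_1)$ and $P_2 = (\Acal_2,\lambda_2,\eta_2,j_2;\rho_2)$ be two $S$-points of $\Jscr_\infty^\perf \times \Mscr_{G,\mu}(\bbf)^\perf$ with the same image under $\pi_\infty^\perf$. Forgetting the Tate tensors and applying Proposition~\ref{prop almost product siegel infinite level}(b) to the images in $\Jscr'^\perf_\infty \times \Mscr_{\GSp_{2g},\mu'}(\bbf')^\perf$ produces a unique quasi-isogeny $g \in J_{b'}(S)$ making the analogous diagram commute. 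The remaining task is to verify that $g$ lies in $J_b$, i.e.\ that $g^\ast t_{0,\alpha} = t_{0,\alpha}$. This is the only genuinely new input and is the main point where the Hodge-type hypothesis intervenes: by Proposition~\ref{prop comparision of Igusa varieties} both $j_1$ and $j_2$ pull back $t_{G,\alpha}$ on the common image to $t_{0,\alpha}$, while $\rho_1$ and $\rho_2$ preserve the crystalline Tate tensors by the very definition of $\Mscr_{G,\mu}(\bbf)$ (cf.\ Section~\ref{ss RZ spaces}). Chasing these identifications through the defining diagram of $g$ forces $g^\ast t_{0,\alpha} = t_{0,\alpha}$, hence $g \in J_b(S)$.

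The main obstacle, as in the Siegel case, is really just the weak \'etaleness statement, since the torsor identity reduces cleanly to its Siegel counterpart once the stabilizer-of-tensors verification above is in place. The rest is bookkeeping: combining (i) and (ii) with Proposition~\ref{prop igusa varieties hodge} shows that $\pi_\infty^\perf$ is surjective (using that $\pi_{N,m}$ is surjective for $m$ large from the preceding proposition), hence a $J_b$-torsor for the pro\'etale topology.
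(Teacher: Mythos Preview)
Your approach matches the paper's proof closely: both reduce weak \'etaleness to the Siegel case via the factorisation through finite-level Igusa varieties, and both verify the torsor condition by pulling the unique $g \in J_{b'}$ from Proposition~\ref{prop almost product siegel infinite level} and checking that it preserves the $t_{0,\alpha}$. Two small points of divergence are worth flagging. First, you omit the verification that $\pi_\infty$ is $J_b$-invariant, which is needed for $\pr_2 \times a$ to land in the fibre product at all; the paper checks this on $\FFbar_p$-points via the identity $\pi_\infty(g.(x,y)) = \Theta_{g.x}(g.y) = \Theta_x(y)$ using (\ref{eq theta}). Second, for the isomorphism in (ii) the paper does not argue bijectivity on arbitrary perfect $S$: instead it observes that $\pr_2 \times a$ is a closed immersion (because the left and bottom arrows in the comparison square with the Siegel side are closed immersions and $\pr_2 \times a'$ is an isomorphism), and then checks surjectivity only on $\FFbar_p$-points, invoking that the target is reduced and Jacobson. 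Your direct $S$-point argument also works, but note that the \'etaleness you invoke for the second factor is not established in the proof of Theorem~\ref{thm almost product} itself; it comes from identifying the formal neighbourhoods with the local almost product structure of \cite{hamacher} (the paper cites this directly rather than Theorem~\ref{thm almost product}).
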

 \begin{proof}
  Recall that for any $x \in \Jscr_m(\FFbar_p)$ and any interior point $y\in \Mscr_{G,\mu}(\bbf)^{m_1,m_2}$ the isomorphism in Proposition~\ref{prop local-global} identifies the restriction of $\pi_{N,m}$ to the formal neighbourhood at $(x,y)$ with the almost product structure considered in \cite{Hamacher:DeforSpProdStr}. Thus it follows that $\pi_\infty^\perf$ is weakly \'etale by the same argument as in Proposition~\ref{prop almost product siegel infinite level} for $\pi_\infty'$. Since the source of $\pi_\infty$ is Jacobson, it suffices to check its $J_b$-invariance on $\FFbar_p$-points. For $(x,y) \in (\Jscr_\infty \times \Mscr_{b,\mu}(\bbf))(\FFbar_p)$ and $g \in J_b$ one has
  \[
   \pi_\infty(g.(x,y)) = \Theta_{g.x}(g.y) \stackrel{(\ref{eq theta})}{=} \Theta_x(y) = \pi_\infty(x,y).
  \]
  Here (\ref{eq theta}) is applied to $(X,\lambda,t_\alpha) = (X_0,\lambda_0,t_{0,\alpha})$ and $\phi=g$.

  To ease the notation, we write $\Pscr_\infty := (\Jscr_\infty \times \Mscr_{G,\mu}(\bbf))^\perf$ and denote by $a\colon J_b \times \Pscr_\infty \to \Pscr_\infty$ the $J_b$-action. We have to show that
  \[
   \pr_2 \times a\colon J_b \times \Pscr_\infty \to \Pscr_\infty \times_{\Sscr_{G,0}^{\bbf\, \perf}} \Pscr_\infty
  \]
  is an isomorphism. Since the target is reduced and Jacobson, it suffices to show that
  \begin{assertionlist}
   \item $\pr_2 \times a$ is a closed embedding and
   \item $\pr_2 \times a$ is surjective on $\FFbar_p$-points.
  \end{assertionlist}
  The first assertion follows directly from the commutative diagram
  \begin{center}
   \begin{tikzcd}
    J_b \times \Pscr_\infty \arrow{r}{\pr_2 \times a} \arrow[hook]{d}[swap]{\textnormal{closed} \atop \textnormal{embedding}} & \Pscr_\infty \times_{\Sscr_{G,0}^\perf} \Pscr_\infty \arrow{d} \\
    J_{b'} \times \Pscr'_\infty \arrow{r}{\sim}[swap]{\pr_2 \times a'} & \Pscr'_\infty \times_{\Ascr_g^\perf} \Pscr'_\infty.
   \end{tikzcd}
  \end{center}
  The second assertion is equivalent to the claim that any two points $(x,y), (u,w) \in \Pscr_\infty(\FFbar_p)$ with the same image under $\pi_\infty$ lie in the same $J_b$ orbit. Indeed $\pi_\infty(x,y) = \pi_\infty(u,w)$ implies $\pi_\infty'(x,y) = \pi_\infty'(u,w)$, thus there exists a $g \in J_{b'}$ such that $g.(u,w) = (x,y)$. More precisely, $g$ is the concatenation $X_0 \stackrel{y}{\longrightarrow} \Acal_{G,\pi_\infty(x,y)}[p^\infty] = \Acal_{G,\pi_\infty(u,w)}[p^\infty] \stackrel{w^{-1}}{\longrightarrow} X_0$. Thus $g$ fixes $t_{0,\alpha}$ for every $\alpha$, i.e.\ $g\in J_b$.
 \end{proof}

 \section{Relation with the Caraiani-Scholze product structure}
\label{sect caraiani scholze}
 \subsection{Construction of the product structure in the Siegel moduli space}
 We briefly sketch the construction of the product structure of Caraiani and Scholze for the Siegel moduli space as presented in \cite[\S~4]{CaraianiScholze:ShVar}.
 
 We fix a lift $(\Xtilde_0,\tilde\lambda_0)$ of $(X_0,\lambda_0)$ over $O_E$. Since $\Jscr_\infty'^\perf$ is a perfect scheme, there exists it can be uniquely extended to a flat formal scheme $\Igfr^{\bbf'}$ over $\Spf \breve\ZZ_p$; more explicitely, its structure sheaf is given by $U \mapsto W(\Ocal_{\Jscr_\infty'^\perf}(U))$. We define the extension $\rtilde_\infty'\colon \Igfr^{\bbf'} \to \Ascr_g$ of $r_\infty$ as the morphism parametrising isomorphisms $(\Acal^\univ[p^\infty],\lambda^\univ) \cong (\Xtilde_0,\tilde\lambda_0)$ (\cite[Lemma~4.3.10]{CaraianiScholze:ShVar}). Moreover, we define the set-valued functor $\Xfr^{\bbf'}$ which maps an $O_E$-algebra $R$ with $p \in R$ nilpotent to
 \[
  \Xfr^{\bbf'}(R) := \{(A,\lambda,\eta; \rho) \mid (A,\lambda,\eta) \in \Ascr_g(R), \rho\colon (A[p^\infty],\lambda) \otimes R/p \to (X_0,\lambda_0) \otimes R/p \textnormal{ quasi-isogeny} \}.
 \] 
 Now consider the morphism $\Igfr^{\bbf'} \times \Mfr_{\GSp_{2g},\mu'} \to \Xfr^{\bbf'}$ mapping $(\Acal,\lambda,\eta;j),(\Xcal,\rho) \in (\Igfr^{\bbf'} \times \Mfr_{G,\mu}(\bbf'))(R)$ to $(\Acal',\lambda',\eta';\rho)$ which is constructed as follows. We lift $\rho$ to $R$ and choose $m$ big enough such that $p^m\rho$ is an isogeny. We endow
 \[
  \Acal' := \bigslant{\Acal}{j(\ker(p^m\rho))}
 \] 
  with the polarisation $\lambda'$ and $K'^p$-level structure $\eta'$ induced by $\lambda$ and $\eta$, respectively. We denote by $\rho'\colon X_0 \to \Acal'$ the quasi-isogeny such that the diagram
 \begin{center}
  \begin{tikzcd}
   X_0 \arrow{r}{j} \arrow[bend left]{rr}{p^m \rho'} & \Acal[p^\infty] \arrow[two heads]{r} & \Acal'[p^\infty]
  \end{tikzcd}
 \end{center}
 commutes.

 \begin{proposition}[{\cite[Lemma~4.3.12]{CaraianiScholze:ShVar}}]
  The above morphism $\Igfr^{\bbf'} \times \Mfr_{\GSp_{2g},\mu'}(\bbf') \to \Xfr^{\bbf'}$ is an isomorphism.
 \end{proposition}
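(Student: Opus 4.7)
The strategy is to produce an explicit inverse morphism $\Psi: \Xfr^{\bbf'} \to \Igfr^{\bbf'} \times \Mfr_{\GSp_{2g},\mu'}(\bbf')$ using the Serre--Tate theorem. Given an $R$-valued point $(A,\lambda,\eta;\rho) \in \Xfr^{\bbf'}(R)$, I would define the second factor by setting $\Xcal := A[p^\infty]$ with quasi-isogeny $\rho^{-1}: X_0 \otimes R/p \to \Xcal \otimes R/p$, which manifestly defines an $R$-point of $\Mfr_{\GSp_{2g},\mu'}(\bbf')$. For the first factor, I would choose $m$ so large that $p^m\rho$ is an isogeny modulo $p$, and consider on $R/p$ the quotient abelian scheme $A_0' := (A\otimes R/p)/\ker(p^m\rho)$, equipped with the induced polarisation $\lambda_0'$ (isotropy of $\ker(p^m\rho)$ for $\lambda$ follows from the similitude condition encoded in $\rho$) and $\Ksf'^p$-level structure $\eta_0'$; the isogeny $p^m\rho$ then factors through a canonical isomorphism $A_0'[p^\infty] \isom X_0 \otimes R/p$.

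Applying the Serre--Tate theorem, lifts of $(A_0',\lambda_0')$ from $R/p$ to $R$ correspond bijectively to lifts of its $p$-divisible group to $R$, so the distinguished lift $\Xtilde_0$ yields a canonical polarised abelian scheme $(\Acal,\lambda_\Acal)$ over $R$ together with an isomorphism $j: \Xtilde_0 \isom \Acal[p^\infty]$; the prime-to-$p$ level structure $\eta_0'$ lifts uniquely by \'etaleness to $\eta_\Acal$. This gives $(\Acal,\lambda_\Acal,\eta_\Acal;j) \in \Igfr^{\bbf'}(R)$, functorially in $R$, completing the definition of $\Psi$.

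To check that $\Psi$ is a two-sided inverse, both compositions reduce to the functoriality of Serre--Tate lifts together with Drinfeld's rigidity of quasi-isogenies of $p$-divisible groups under nilpotent thickenings (this is also the tool implicitly used to make the forward morphism $\Phi$ well-defined when ``lifting $\rho$ to $R$''). For $\Phi \circ \Psi = \id$, one observes that modulo $p$ the quotient $\Acal/j(\ker(p^m\rho^{-1}))$ equals the quotient of $A_0'$ by the kernel of the dual isogeny $A_0' \to A\otimes R/p$, hence equals $A \otimes R/p$; Serre--Tate then promotes this to an identification over $R$, and the induced quasi-isogeny matches $\rho$ by construction. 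For $\Psi \circ \Phi = \id$, the isomorphism $\Acal'[p^\infty] \cong \Xcal$ over $R$ is recovered by noting that, by rigidity, the isogeny $\Acal[p^\infty] \to \Acal'[p^\infty]$ is the unique lift of the mod-$p$ isogeny whose target is the given $\Xcal$, so Serre--Tate returns $\Acal$ as the designated lift of the source.

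The principal obstacle I expect is the careful tracking of the polarisation and similitude multipliers through the Serre--Tate construction, together with the explicit verification that $j(\ker(p^m\rho^{-1}))$ coincides with the kernel of the appropriate dual isogeny $\Acal\otimes R/p \to A\otimes R/p$. Once these bookkeeping compatibilities are established, the remainder of the argument is formal.
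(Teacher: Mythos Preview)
The paper does not give its own proof of this proposition; it simply records the statement with a citation to Caraiani--Scholze \cite[Lemma~4.3.12]{CS}, so there is nothing in the present paper to compare your argument against directly. Your strategy --- build an explicit inverse by reading off the Rapoport--Zink datum as $(A[p^\infty],\rho^{-1})$ and recovering the Igusa datum via Serre--Tate from the quotient $(A\otimes R/p)/\ker(p^m\rho)$ together with the distinguished lift $\Xtilde_0$ --- is exactly the approach taken in the Caraiani--Scholze reference, and it is correct. The bookkeeping issues you flag (independence of the choice of $m$, compatibility of polarisations and similitude factors, and the identification of $j(\ker(p^{m}\rho^{-1}))$ with the kernel of the dual isogeny modulo $p$) are genuine but routine; once you observe that by construction $\Acal'[p^\infty]\cong\Xcal$ canonically and that all quasi-isogenies in sight are determined by their reductions modulo $p$ via Drinfeld rigidity, the two compositions collapse to the identity.
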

 
 Let $\tilde\pi'_\infty$ denote the concatenation $\Igfr^{\bbf'} \times \Mfr_{\GSp_{2g},\mu'} \isom \Xfr^{\bbf'} \stackrel{can.}{\longrightarrow} \Ascr_g$.
  
 \subsection{The local product structure}
 Since the $\mfr$-adic topology on the stalks of $\Igfr^{\bbf'} \times \Mfr_{\GSp_{2g},\mu'}(\bbf')$ is not separable, considering their completions would be unrewarding. Instead, we consider the following construction.
 
 \begin{definition}
  Let $A$ be an $R$-algebra where $R$ is Noetherian and $I \subset A$ be an ideal. The restricted $I$-adic completion of $A$ relative to $R$ is defined as
  \[
   \Ahat = \underrightarrow\lim\, \hat{A}_\lambda
  \]
  where $A = \underrightarrow\lim A_\lambda$ is a filtered colimit with $A_\lambda$ finitely presented over $R$ and $\hat{A}_\lambda$ denotes their completion with respect to the preimage $I_\lambda$ of $I$ in $A_\lambda$.
 \end{definition}
 
 The restricted completion has the advantage that it has similar properties as the completion of Noetherian rings. 
 
 \begin{lemma} \label{lemma restricted completion}
  Let $R$ be a Noetherian ring, $A$ an $R$-algebra and $I$ an ideal of $A$.
  \begin{subenv}
   \item The restricted completion $\hat{A}$ does not depend on the choice of the $A_\lambda$. In particular, if $A$ is of finite presentation over $R$, $\hat{A}$ is just the $I$-adic completion of $A$.
   \item $A \rightarrow \hat{A}$ is flat.
   \item Let $B$ be a finite $A$-algebra, $J = IB$ and $\Bhat$ the restricted completion of $B$ relative to $R$. Then $\Bhat = B \otimes_A \Ahat $.
   \item The image of $\Spec{\hat{A}}$ in $\Spec{A}$ is the completion of $V(I)$, i.e.\ the pro-open subset of all points which specialise to a point in $V(I)$.
   \item If $A$ is local with maximal ideal $I$, then for any ideal $J \subset A$ we have $ \Ahat\cdot J  \cap A = J$.
  \end{subenv}
 \end{lemma}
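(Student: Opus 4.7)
The plan is to establish the five assertions in order, exploiting the standard behaviour of $I$-adic completion on Noetherian rings together with the exactness of filtered colimits. For (1), I would argue by cofinality: any two filtrations of $A$ by finitely presented $R$-subalgebras are cofinal in one another, since each finitely presented $A_\lambda$ factors through any given filtration, and the resulting comparison maps between completions automatically respect the preimage ideals $I_\lambda$. If $A$ is itself finitely presented over $R$, then the one-element system $\{A\}$ is cofinal, so $\hat{A}$ coincides with the ordinary $I$-adic completion.

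For (2), the flatness of each $A_\lambda \to \hat{A}_\lambda$ is classical since $A_\lambda$ is Noetherian. Given a short exact sequence of finitely presented $A$-modules, by finite presentation it descends to an exact sequence over some $A_\mu$; base changing along the flat maps $A_{\mu'} \to \hat{A}_{\mu'}$ for $\mu' \geq \mu$ keeps exactness, and the filtered colimit over $\mu'$ computes $- \otimes_A \hat{A}$ while preserving exactness. This verifies flatness on finitely presented modules, hence on all modules.

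For (3), I would descend $B$ to a finite $A_\lambda$-algebra $B_\lambda$ and combine associativity of tensor products with the classical identity $\hat{B}_\mu = B_\mu \otimes_{A_\mu} \hat{A}_\mu$ (for $B_\mu$ finite over the Noetherian ring $A_\mu$) to rewrite $B \otimes_A \hat{A}$ as the colimit $\varinjlim \hat{B}_\mu$, which is $\hat{B}$ by definition. For (4), the flatness from (2) gives that a prime $\mathfrak{p}$ lies in the image iff $\mathfrak{p}\hat{A} \neq \hat{A}$; the colimit description reduces this to asking whether $\mathfrak{p}_\lambda + I_\lambda \neq A_\lambda$ for some (equivalently, every sufficiently large) $\lambda$, which is equivalent to $\mathfrak{p} + I \neq A$, i.e.\ $\mathfrak{p}$ specialises into $V(I)$.

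Finally, for (5), being local with maximal ideal $I$ forces $A \to \hat{A}$ to be faithfully flat: flatness comes from (2), and (4) places the maximal ideal $\mathfrak{m}$ in the image of $\Spec \hat{A}$, hence all of $\Spec A$ by specialisation in the local ring. The identity $J\hat{A} \cap A = J$ is then a standard property of faithfully flat extensions. The main subtlety I expect is in (2): one must verify carefully that $- \otimes_A \hat{A}$ genuinely commutes with the filtered colimit of base changes, so that exactness can be inherited from the Noetherian level. Once this interaction is pinned down, the remaining assertions follow cleanly from completion theory.
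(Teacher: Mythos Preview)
Your plan for (1), (3), and (4) matches the paper's argument essentially verbatim, and your argument for (5) is in fact cleaner than the paper's: you deduce faithful flatness of $A \to \Ahat$ from (2) and (4) and then invoke the standard identity $J\Ahat \cap A = J$, whereas the paper localises each $A_\lambda$ at $I\cap A_\lambda$ and applies Krull's intersection theorem level by level before passing to the colimit. Both work; yours is more conceptual.

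There is, however, a genuine gap in your treatment of (2). You descend a short exact sequence of finitely presented $A$-modules to an exact sequence over some $A_\mu$, and then claim that for $\mu' \geq \mu$ ``base changing along the flat maps $A_{\mu'} \to \hat{A}_{\mu'}$ keeps exactness''. But the sequence you are feeding into $-\otimes_{A_{\mu'}}\hat A_{\mu'}$ is the base change of your exact $A_\mu$-sequence along $A_\mu \to A_{\mu'}$, and these transition maps are \emph{not} flat in general; so the $A_{\mu'}$-sequence need not be exact, and flatness of $A_{\mu'}\to\hat A_{\mu'}$ only tells you that the defect of exactness is preserved, not removed. Tracking the resulting homology through the colimit does not obviously kill it either, since the system $H(C_{\mu'})\otimes_{A_{\mu'}}\hat A_{\mu'}$ is not simply $H(C_{\mu'})$ tensored with a fixed ring. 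The subtlety you flag (commutation of $-\otimes_A\hat A$ with the colimit) is real but not the crux; the crux is the non-flatness of the $A_\mu\to A_{\mu'}$.

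The paper sidesteps this entirely by rewriting
\[
\Ahat = \varinjlim \hat A_\lambda = \varinjlim\bigl(A\otimes_{A_\lambda}\hat A_\lambda\bigr),
\]
the second equality coming from a cofinality argument in the double system $(A_\mu\otimes_{A_\lambda}\hat A_\lambda)_{\mu\geq\lambda}$. Each $A\otimes_{A_\lambda}\hat A_\lambda$ is $A$-flat because $\hat A_\lambda$ is $A_\lambda$-flat and flatness is stable under base change; a filtered colimit of flat $A$-modules is flat. This one-line reformulation is what you should use in place of your descent-of-exact-sequences argument.
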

 \begin{proof}
  Assume that $A = \underrightarrow\lim A_\lambda = \underrightarrow\lim B_\mu$ where $A_\lambda$ and $B_\mu$ are finitely presented $R$-algebras. Thus for any $\lambda$ there exists a $\mu(\lambda)$ such that the canonical morphism $A_\lambda \to A$ factorises over $B_{\mu(\lambda)}$ and for any $\mu$ there exists a $\lambda(\mu)$ such that $B_\mu \to A$ factorises over $A_{\lambda(\mu)}$. Since the morphisms are continuous, they induce a morphisms $\phi_\lambda\colon \hat{A}_\lambda \to \hat{B}_\mu$ and $\psi_\mu\colon \hat{B}_\mu \to \hat{A}_\lambda$. Since the diagrams
  \begin{center}
   \begin{tikzcd}
    \Ahat_{\lambda(\mu(\lambda))} & & & \Bhat_{\mu(\lambda(\mu))} \\
   & \Bhat_{\mu(\lambda)} \arrow{ul}{\psi_{\mu(\lambda)}} & \Ahat_{\lambda(\mu)} \arrow{ur}{\phi_{\lambda(\mu)}} & \\
    \Ahat_\lambda \arrow{uu}{can.} \arrow{ur}{\phi_\lambda} & & & \Bhat_\mu \arrow{ul}{\psi_\mu} \arrow{uu}{can.}
   \end{tikzcd}
  \end{center}
  commute, taking the limit of $(\phi_\lambda)$ and $(\psi_\mu)$ yields two mutually inverse isomorphisms $\underrightarrow\lim \Ahat_\lambda \cong \underrightarrow\lim \Bhat_\mu$. The second assertion follows from $\hat{A} = \underrightarrow\lim \hat{A}_\lambda = \underrightarrow\lim A \otimes_{A_\lambda} \hat{A}_\lambda$, as the limit of flat $A$-modules is flat. To prove the third assertion, write $A = \underrightarrow\lim A_\lambda$, $B=\underrightarrow\lim B_\lambda$ such that each $B_\lambda$ is a finite $A_\lambda$-algebra. Then
  \[
   \Bhat = \underrightarrow\lim B \otimes_{B_\lambda} \Bhat_\lambda = \underrightarrow\lim B \otimes_{A_\lambda} \Ahat_\lambda = B \otimes \Ahat.
  \] 
  The forth assertion is a direct consequence of the analogous assertion for $A_\lambda \to \Ahat_\lambda$. To see the last assertion, let $A'_\lambda$ be the localisation of $A_\lambda$ at $I \cap A_\lambda$. Note that $A$ and $\Ahat$ are the limits of $A_\lambda'$ and $\Ahat_\lambda'$, respectively. Now Krull's intersection theorem gives
  \[
   \Ahat_\lambda' \cdot (J \cap A_\lambda) = J \cap A_\lambda,
  \]
  which yields $\Ahat \cdot J \cap A = J$ after taking the limit.
 \end{proof}
 
 We deduce the following analogue of \cite[\S~III.5, Thm.3]{Mumford:RedBook}.

 \begin{corollary} \label{cor criterion for etale}
  Let $\phi:(A,\mfr) \to (B,\nfr)$ be a local homomorphism of finite type of $R$-algebras such that the induced field extension of residue fields $\overline\phi\colon A/\mfr \to B/\nfr$ and the induced morphism $\hat\phi\colon \Ahat \to \Bhat$ are isomorphisms. Then $\phi$ is \'etale.
 \end{corollary}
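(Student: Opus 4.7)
My plan is to adapt Mumford's classical criterion for étaleness (\cite[Ch.~III, \S~5, Thm.~3]{mumford99}) to the restricted-completion setting: I will show separately that $\phi$ is flat and unramified, so that combined with the finite type hypothesis it is étale. As a preliminary observation, both completion maps $A \to \Ahat$ and $B \to \Bhat$ are faithfully flat: flatness is Lemma~\ref{lemma restricted completion}(b), while surjectivity on spectra follows from Lemma~\ref{lemma restricted completion}(d) since every prime of a local ring specializes to the closed point. Flatness of $\phi$ then follows by descent along the faithfully flat map $B \to \Bhat$: via the isomorphism $\hat\phi$, the composition $A \to B \to \Bhat$ is identified with the (flat) canonical map $A \to \Ahat$, forcing $\phi$ to be flat.

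For unramifiedness it suffices to show $\nfr = \mfr B$, as the residue field extension is already an isomorphism by hypothesis. Once the identity $\Ahat/\mfr\Ahat = A/\mfr$ is established, the hypothesis on $\hat\phi$ gives $\Bhat/\mfr\Bhat = A/\mfr$. Base-changing the faithfully flat map $B \to \Bhat$ along $B \epi B/\mfr B$ then produces a faithfully flat map $B/\mfr B \to A/\mfr$, which is surjective because it factors the given surjection $B \epi B/\nfr = A/\mfr$. Since a faithfully flat surjection of rings is an isomorphism (its kernel becomes zero after base change along a faithfully flat map), we conclude $\nfr = \mfr B$, and therefore $\phi$ is étale.

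The main technical point I have glossed over is the identity $\Ahat/\mfr\Ahat = A/\mfr$ in the possibly non-Noetherian setting. Writing $A = \underrightarrow\lim A_\lambda$ with each $A_\lambda$ a finitely presented $R$-subalgebra and $\mfr_\lambda := \mfr \cap A_\lambda$, one has $\Ahat = \underrightarrow\lim \Ahat_\lambda$ and hence $\Ahat/\mfr\Ahat = \underrightarrow\lim \Ahat_\lambda/\mfr_\lambda \Ahat_\lambda$. Each $A_\lambda$ is Noetherian, being finitely presented over the Noetherian $R$, so the classical formula $\Ahat_\lambda/\mfr_\lambda \Ahat_\lambda = A_\lambda/\mfr_\lambda$ applies, and taking the colimit recovers $A/\mfr$. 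Carefully verifying this interchange of colimits with the quotient---using compatibility of the transition maps $A_\lambda \to A_{\lambda'}$ with the chosen ideals $\mfr_\lambda$, in the spirit of Lemma~\ref{lemma restricted completion}(a)---is the main technical step beyond the formal descent arguments.
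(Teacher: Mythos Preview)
Your proof is correct and follows essentially the same strategy as the paper's: establish flatness and unramifiedness separately, using parts (2) and (4) of Lemma~\ref{lemma restricted completion} to see that the completion maps $A\to\Ahat$ and $B\to\Bhat$ are faithfully flat. For unramifiedness the paper appeals to part~(5) of the lemma together with Nakayama on $\Omega_{B/A}$, whereas you bypass part~(5) and argue directly that $\mfr B=\nfr$ via the identity $\Ahat/\mfr\Ahat=A/\mfr$ and a descent argument; the colimit computation you sketch in your third paragraph is precisely the argument underlying the proof of part~(5), so the two routes differ only in packaging, and your write-up is in fact more explicit than the paper's terse invocation of~(5).
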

 \begin{proof}
  Since $\overline\phi$ is an isomorphism, we have $\Omega_{B/A} \otimes A/\mfr = 0$ by part (5) of above lemma, hence $\phi$ is formally unramified (and thus unramified) by Nakayama's lemma. Since $\hat\phi$ is an isomorphism, parts (2) and (4) of the above lemma moreover imply that $\phi$ is flat and the claim follows.
 \end{proof}

 \begin{definition}
  We define the restricted formal neighbourhood of a formal scheme $\Xfr$ over $\Spf \breve\ZZ_p$ at a point $x$ as the restricted $\mfr_{\Xfr,x}$-adic completion of $\Ocal_{\Xfr,x}$ relative to $\breve\ZZ_p$.
 \end{definition} 
 
 Let $\xtilde_0 \in \Jscr_\infty(k)$, with images  $x_0 \in C(k), x_0' \in C'(k)$ and $x \in \Mscr_{G,\mu}(\bbf)(k)$ with corresponding quasi-isogeny $\rho\colon (X_0,\lambda_0,t_{0,\alpha}) \to (X,\lambda,t_\alpha)$. Denote by $(X_0^\loc, \lambda_0^\loc)$ and $(X^\loc,\lambda^\loc)$ the universal deformations of $(X_0,\lambda_0)$ and $(X,\lambda)$, respectively.
  The canonical isomorphism $\Ascr_{g,x_0'}^\wedge \isom \Def(X_0,\lambda_0)$ identifies the fibre of $\rtilde_\infty'$ over the formal neighbourhood with the formal scheme ${\Cfr_\infty'}^{\Gamma_{b'}}$ parametrising isomorphisms $j\colon \Xtilde_0 \isom X_0^{\loc}$.
   The restricted formal neighbourhood of $\xtilde_0$ in $\Igfr^{\bbf'}$ is thus given by a connected component $\Cfr_\infty' \subset {\Cfr_\infty'}^{J_{b'}}$. Hence the restriction of $\tilde \pi_\infty'$ to the restricted formal neighbourhood at the closed points corresponds to a morphism
 \[
  \tilde\pi_{\infty,\loc}'\colon \Cfr_\infty' \times \Mfr_{\GSp_{2g},\mu'}(\bbf')_x^\wedge \to \Def(X,\lambda).
 \]
 
 \begin{remark}
  Denote by $\rho^\loc$ the restriction of the universal quasi-isogeny over $\Mfr_{\GSp_{2g},\mu'}$. Then we have 
 \begin{equation} \label{eq local product structure}
  \tilde\pi_{\infty,\loc}'^\ast X^\loc = \bigslant{ X_0 }{ \ker \rho^\loc}.
 \end{equation}
 One cannot use this relation directly to define $\tilde\pi_{\infty,\loc}'$ via the moduli description of $\Def(X,\lambda)$ since its source is not formally of finite type. However, one can define it as the limit of the following morphisms on ``finite level''. Let $\Cfr_m'$ be the scheme theoretic image of $\Cfr_\infty$ in the formal scheme parametrising isomorphisms $j_m:\Xtilde_0[p^m] \isom X_0^\loc[p^m]$. Note that $\Cfr_m'$ is finite over $\Def(X_0,\lambda_0)$. Let $\Ifr_m' \subset \Mfr_{\GSp_{2g},\mu'}(\bbf')^\wedge_x$ be the locus where there exist $m_1,m_2$ with $m_1 + m_2 \leq m$ such that $p^{m_1}\rho^\loc$ and $p^{m_2}\rho^\loc$ are isogenies. Then $X_{0, \Cfr_m'}^\loc/j_m(\ker p^{m_1} \rho^\loc)$ is a deformation of $X$ and thus induces a morphism
\[
 \tilde\pi_{m,\loc}'\colon \Cfr_m' \hat\times \Ifr_m' \to \Def(X,\lambda).
\]   
 \end{remark} 
 
 Finally, we can construct the local version of the product structure for Shimura varieties of Hodge type. Let $\Igfr^{\bbf}$ be the (unique) flat lift of $\Jscr_b^\perf$ over $\Spf \breve\ZZ_p$. In terms of the identification of Proposition~\ref{prop local-global}, the restricted formal neighbourhood of $\Igfr^{\bbf}$ at $\xtilde_0$ corresponds to the preimage $\Cfr_\infty \subset \Cfr_\infty'$ of $\Def(X_0,\lambda_0,t_{0,\alpha})$

 \begin{lemma} \label{lem local product structure}
  The restriction $\pi_{\infty,\loc}$ of $\pi_{\infty,\loc}'$ to $\Cfr_\infty \times \Mfr_{G,\mu}(\bbf)_x^\wedge$ factorises through $\Def(X,\lambda,t_\alpha)$.
 \end{lemma}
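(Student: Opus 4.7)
The plan is to produce a family of crystalline Tate tensors on $\pi_{\infty,\loc}^\ast(X^\loc,\lambda^\loc)$ that lifts $t_\alpha$; by the universal property of $\Def(X,\lambda,t_\alpha)$, such data is exactly a morphism $\Cfr_\infty \times \Mfr_{G,\mu}(\bbf)_x^\wedge \to \Def(X,\lambda,t_\alpha)$ refining $\pi_{\infty,\loc}$, which is what we need.

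First I would unfold the pullback using (\ref{eq local product structure}). Fix $m_1$ so that $p^{m_1}\rho^\loc$ is an isogeny; then the pullback of the universal $p$-divisible group is $X_0^\loc/j(\ker p^{m_1}\rho^\loc)$ with induced polarisation. By rigidity of quasi-isogenies, the universal $\rho^\loc$ over $\Mfr_{G,\mu}(\bbf)$ admits a unique lift to the restricted formal neighbourhood, and the composition $p^{m_1}\rho^\loc \circ j^{-1}$ is an isogeny $X_0^\loc \to \pr_2^\ast X^\loc$ whose kernel is precisely $j(\ker p^{m_1}\rho^\loc)$. It thus descends to a canonical isomorphism
\[ \pi_{\infty,\loc}^\ast X^\loc \;=\; X_0^\loc/j(\ker p^{m_1}\rho^\loc) \;\cong\; \pr_2^\ast X^\loc \]
over $\Cfr_\infty \times \Mfr_{G,\mu}(\bbf)_x^\wedge$, compatible with the polarisations up to the expected scalar coming from $p^{m_1}\rho^\loc$.

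Second, I would transport tensors from the Rapoport--Zink side. The moduli problem recalled in \S\ref{ss RZ spaces} records, as part of the universal object over $\Mfr_{G,\mu}(\bbf)$, crystalline Tate tensors $t_\alpha^\loc$ on $X^\loc$ lifting $t_\alpha$, together with the tensor-preservation property of $\rho^\loc$. Pulling $t_\alpha^\loc$ back via $\pr_2$ and transporting through the isomorphism of the first step endows $\pi_{\infty,\loc}^\ast X^\loc$ with the desired family of crystalline Tate tensors lifting $t_\alpha$. The resulting triple then classifies the factorisation through $\Def(X,\lambda,t_\alpha)$.

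The main obstacle is the bookkeeping in the first step: one must verify carefully that the polarisation $\lambda_{\rm ind}$ induced from $\lambda_0^\loc$ on the quotient matches $\pr_2^\ast\lambda^\loc$ under the isomorphism (up to the natural $p$-power scalar) and, under this identification, that the transported tensors are precisely $\pr_2^\ast t_\alpha^\loc$; this uses the full polarisation- and tensor-compatibility conditions imposed in both the Igusa and Rapoport--Zink moduli problems. An alternative and perhaps more symmetric route would be to transport the universal tensors $t_{0,\alpha}^\loc$ directly from $\Cfr_\infty \subset \Def(X_0,\lambda_0,t_{0,\alpha})$ along the quotient quasi-isogeny $X_0^\loc \to X_0^\loc/j(\ker p^{m_1}\rho^\loc)$, a priori obtaining rational tensors, and then deduce integrality from their specialisation at the closed point (where they equal $\rho_\ast t_{0,\alpha} = t_\alpha$); the essential content is again the tensor-compatibility of the universal Rapoport--Zink quasi-isogeny.
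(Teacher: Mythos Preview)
Your core idea—that the pullback $\pi_{\infty,\loc}^\ast X^\loc$ carries crystalline Tate tensors coming from the universal object on $\Mfr_{G,\mu}(\bbf)_x^\wedge$, and that this should force the factorisation—is the right one, and it is also the essential content of the paper's proof. Your identification in step~1 is correct: once the full trivialisation $j$ is in place, equation~(\ref{eq local product structure}) says precisely that $\pi_{\infty,\loc}^\ast X^\loc$ agrees with the universal Barsotti--Tate group over $\Mfr_{G,\mu}(\bbf)_x^\wedge$ pulled back along $\pr_2$.

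There is, however, a genuine gap in your last step. You write ``by the universal property of $\Def(X,\lambda,t_\alpha)$, such data is exactly a morphism \ldots'', but the source $\Cfr_\infty \times \Mfr_{G,\mu}(\bbf)_x^\wedge$ is not formally of finite type; it is of the shape $\Spf \breve\ZZ_p\pot{x_1^{1/p^\infty},\ldots,x_d^{1/p^\infty},y_1,\ldots,y_n}$. The moduli interpretation of $\Def(X,\lambda,t_\alpha)$, and in particular the criterion deciding whether a given deformation carries the required tensors, is only established over formally smooth formally finitely generated bases (this is Faltings' criterion, cf.\ \cite[Thm.~3.6]{kim}). The paper's Remark preceding equation~(\ref{eq local product structure}) flags exactly this obstruction: one cannot argue directly via the moduli description when the source is not formally of finite type. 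The same objection applies to your alternative route via transport along the quotient quasi-isogeny.

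The paper closes this gap as follows. Since $\Def(X,\lambda)$ is formally of finite presentation, the morphism $\pi_{\infty,\loc}$ automatically factors through some finite-level truncation $\Spf \breve\ZZ_p\pot{x_1^{1/p^N},\ldots,x_d^{1/p^N},y_1,\ldots,y_n}$ of the source. Over this formally smooth, formally finitely generated base, Faltings' criterion applies and reduces the question to checking that the universal quasi-isogeny in (\ref{eq local product structure}) respects the tensors, which is \cite[Prop.~2.16]{hamacher}. So your tensor-transport argument is valid once you insert this reduction; without it the appeal to the universal property is not justified.
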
  
 \begin{proof}
  First note that because of the formal smoothness of the Rapoport-Zink space and the central leaf we have an abstract isomorphism \[\Cfr_\infty \times \Mscr_{G,\mu}(\bbf)_x^\wedge \cong \Spf \breve\ZZ_p\pot{x_1^{1/p^\infty},\ldots,x_d^{1/p^\infty},y_1,\ldots,y_n}\] for some $d,n$. Since $\Def(X,\lambda)$ is formally of finite presentation $\tilde\pi_{\infty,\loc}$ factorises through $\Spf \breve\ZZ_p\pot{x_1^{1/p^N},\ldots,x_d^{1/p^N},y_1,\ldots y_n}$ for some $N$. By Faltings' criterion (see e.g.\ \cite[Thm.~3.6]{Kim:RZ}) it suffices to show that (\ref{eq local product structure}) respects crystalline Tate tensors. This is a direct consequence of (a slight generalisation of) \cite[Prop.~2.16]{Hamacher:DeforSpProdStr}.
 \end{proof}

 \subsection{The product structure over Shimura varieties of Hodge type}
 Since the formal schemes we consider are not formally of finite type, we need to consider a larger class of test objects than the formally smooth, formally finitely generated $\breve\ZZ_p$-algebras used in \cite{HowardPappas:GSpin}.

 \begin{definition}
  Let $m \in \NN \cup \{\infty\}$ and $A$ be a $p$-adically complete $\breve\ZZ_p/p^m$-algebra.
  \begin{subenv}
   \item $A$ is called perfectly formally finitely generated if there exist positive integers $r_1,r,s_1,s$ such that $A$ can be written as a quotient of
   \[
    \bigslant{\breve\ZZ_p}{p^m}\pot{x_1,\ldots,x_{r_1},x_{r_1+1}^{1/p^\infty},\ldots,x_r^{1/p^\infty}}[y_1,\ldots,y_{s_1}, y_{s_1+1}^{1/p^\infty},\ldots,y_s^{1/p^\infty}]
   \]
   \item $A$ is called essentially perfectly formally finitely generated if it is the $p$-adic completion of an  ind\'etale algebra over some perfectly formally finitely generated $\breve\ZZ_p/p^m$ algebra.
   \item $A$ is called essentially smooth if it is essentially perfectly formally finitely generated and its restricted completion at any maximal ideal is isomorphic to the $p$-adic completion of $\bigslant{\breve\ZZ_p}{p^m}\pot{x_1,\ldots,x_{r_1},x_{r_1+1}^{1/p^\infty},\ldots,x_r^{1/p^\infty}}$ for some $r,r_1$.
   \item Denote by ${\rm Nilp}_{\breve\ZZ_p}^{es}$ the full category of $\breve\ZZ_p$-modules which are $p^m$-torsion and essentially smooth as $\breve\ZZ_p/p^m$-algebras for some $m\in \NN$.
  \end{subenv}
 \end{definition}

 \begin{example} \label{example essentially smooth}
  \begin{subenv}
   \item If $A$ is a formally finitely generated $\breve\ZZ_p/p^m$-algebra, it is essentially smooth if and only if it is formally smooth.
   \item If $A$ is a formally finitely generated smooth $\FFbar_p$-algebra, then $A^\perf$ is essentially smooth.
  \end{subenv}
 \end{example}

  We extend the notion of essential smoothness to formal schemes.

 \begin{definition}
  A formal scheme $\Xfr$ over $\Spf W$ is called essentially smooth (resp.\ locally perfectly formally of finite type, locally essentially perfectly formally of finite type) if it is Zariski locally of the form $\Spf A$ where $A$ is an essentially smooth $\breve\ZZ_p$-algebra (resp.\ perfectly formally of finite type, essentially perfectly formally of finite type).
 \end{definition}

 Using the usual Yoneda argument, one checks that any essentially smooth formal $\breve\ZZ_p$-scheme is uniquely determined by its functor of points restricted to the full subcategory of affine formal schemes whose ring of global sections lies in ${\rm Nilp}_{\breve\ZZ_p}^{es}$.

 \begin{lemma}
  The formal schemes $\Mfr_{G,\mu}(\bbf)$ and $\Igfr^\bbf$ are essentially smooth.
 \end{lemma}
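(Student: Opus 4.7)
The plan is to handle the two formal schemes separately, since $\Mfr_{G,\mu}(\bbf)$ is already known to be formally smooth and formally of finite type, while $\Igfr^\bbf$ requires a genuine pro-\'etale-plus-perfection argument.

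For $\Mfr_{G,\mu}(\bbf)$, the Howard--Pappas theorem (Theorem~3.2.1 of \cite{HP}) recalled above asserts that $\Mfr_{G,\mu}(\bbf)$ is formally smooth and formally of finite type over $\Spf \breve\ZZ_p$. Zariski locally it is therefore $\Spf A$ for $A$ a formally smooth, topologically finitely generated $\breve\ZZ_p$-algebra, and each reduction $A/p^m$ is a formally smooth, formally finitely generated $\breve\ZZ_p/p^m$-algebra. Essential smoothness then follows immediately from Example~\ref{example essentially smooth}~(1).

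For $\Igfr^\bbf$, I reduce to a local analysis on $C$. By Proposition~\ref{prop igusa varieties hodge}, $\Jscr_\infty \to C$ is profinite pro-\'etale with group $\Gamma_b$, and $C$ is smooth over $\FFbar_p$ of dimension $d = 2\langle \rho, \nu(\bbf)\rangle$. Choose an affine open $U = \Spec R \subset C$ with $R$ smooth over $\FFbar_p$; then the preimage in $\Jscr_\infty$ is $\Spec R_\infty$ with $R_\infty = \underrightarrow{\lim}\, R_m$ a filtered colimit of finite \'etale $R$-algebras, and $R_\infty^\perf$ is ind-\'etale over $R^\perf$. By Example~\ref{example essentially smooth}~(2), $R^\perf$ is essentially smooth; its unique flat lift to $\breve\ZZ_p$ is a perfectly formally finitely generated $\breve\ZZ_p$-algebra. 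Since ind-\'etale extensions lift uniquely along nilpotent thickenings, the ind-\'etale structure lifts and its $p$-adic completion provides an affine piece of $\Igfr^\bbf$; this exhibits $\Igfr^\bbf$ as locally essentially perfectly formally of finite type.

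It remains to check the shape of the restricted completion at any closed point. Fix $\xtilde \in \Igfr^\bbf$ lying over a closed point $x \in C(\FFbar_p)$. By Example~\ref{example essentially smooth}~(2) and flat lifting, the restricted completion of the $\breve\ZZ_p$-lift of $R^\perf$ at $x$ is (mod $p^m$) the $p$-adic completion of $\breve\ZZ_p/p^m\pot{t_1^{1/p^\infty},\ldots,t_d^{1/p^\infty}}$. Because each transition map $R_m \to R_{m+1}$ is finite \'etale with trivial residue field extension (residue fields are all $\FFbar_p$), the completed local rings of the $R_m$ at the successive preimages of $x$ are all isomorphic to that of $R$ at $x$; by Lemma~\ref{lemma restricted completion}~(3) the same holds after passing to the flat $\breve\ZZ_p$-lifts and after perfection along the Frobenius transition maps on $R$. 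Passing to the filtered colimit and taking the restricted completion preserves the shape, so the restricted completion of $\Igfr^\bbf$ at $\xtilde$ is again the $p$-adic completion of $\breve\ZZ_p/p^m\pot{t_1^{1/p^\infty},\ldots,t_d^{1/p^\infty}}$, as required.

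The main technical obstacle is the very last point: compatibility of the restricted completion with the filtered colimit that defines $\Jscr_\infty^\perf$ together with the Frobenius colimit that defines the perfection. This is essentially bookkeeping using parts (1)--(3) of Lemma~\ref{lemma restricted completion} and the classical fact that finite \'etale maps with trivial residue extension become isomorphisms after completion, but one has to be careful to present the system of finitely presented approximations $A_\lambda$ so that both the ind-\'etale and the Frobenius colimits are visible simultaneously before taking completion.
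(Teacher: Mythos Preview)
Your proof is correct and follows essentially the same strategy as the paper's: formal smoothness of $\Mfr_{G,\mu}(\bbf)$ via Example~\ref{example essentially smooth}(1), and for $\Igfr^\bbf$ the combination of the ind-\'etale tower over the perfection of $C$ with the explicit computation of the restricted completion at a closed point. The paper's organization is marginally cleaner in that it lifts each $\Jscr_m^\perf$ separately to a flat $\Igfr_m$ over $\Spf\breve\ZZ_p$, shows the transitions $\Igfr_{m+1}\to\Igfr_m$ are \'etale, and then reduces the completion computation directly to the base level $\Igfr_0$, thereby sidestepping the simultaneous-colimit bookkeeping you flag in your last paragraph.
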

 \begin{proof}
  Since $\Mfr_{G,\mu}(\bbf)$ is formally smooth and locally formally of finite type by \cite[Prop.~3.2.7]{HowardPappas:GSpin}, it follows from the example above that it is essentially smooth. To see that $\Igfr^\bbf$ is essentially smooth denote by $\Igfr_m$ the (unique) flat lift of $\Jscr_m^\perf$ over $\Spf W$. Then $\Igfr_m$ is perfectly formally of finite type and moreover the lift $\Igfr_{m+1} \to \Igfr_m$ is \'etale since it is the unique lift to a morphism of flat formal schemes over $\Spf\breve\ZZ_p$ and the existence of an \'etale lift is guaranteed by \cite[04DZ]{AlgStackProj}. Thus $\Igfr^\bbf$ is essentially perfectly formally of finite type. By construction the restricted local neighbourhood $x$ at a closed point of $\Igfr^\bbf$ equals the limit of restricted formal neighbourhoods of the image of $x$ in $\Igfr_m$. As a consequence of Corollary~\ref{cor criterion for etale} all transition morphisms of this limit are isomorphisms; thus it suffices to check that $\Igfr_0$ is essentially smooth. Since the formal neighbourhood of $C = \Jscr_0$ at a closed point is isomorphic to $\FFbar_p\pot{x_1,\ldots,x_{2\langle \rho, \nu \rangle}}$, the restricted formal neighbourhood of its perfection equals $\FFbar_p\pot{x_1^{1/p^\infty},\ldots,x_{2\langle \rho, \nu \rangle}^{1/p^\infty}}$. Thus the restricted completion of $\Igfr_0$ (and thus also $\Igfr^\bbf$) at a closed point is
  \[
   W(\FFbar_p\pot{x_1^{1/p^\infty},\ldots,x_{2\langle \rho, \nu \rangle}^{1/p^\infty}}) = \breve\ZZ_p \pot{x_1^{1/p^\infty},\ldots,x_{2\langle \rho, \nu \rangle}^{1/p^\infty}}^{\wedge\, p-adic}
  \]
 \end{proof}

 \begin{proposition} \label{prop CS product}
  \begin{subenv}
   \item There exists a unique lift $\tilde\pi_\infty\colon \Igfr^\bbf \times \Mfr_{G,\mu}(\bbf) \to \Sscr_G$ of $\tilde\pi_\infty'$ whose restriction to the underlying reduced subscheme equals $\pi_\infty$.
   \item Let $\Xfr^\bbf$ be the subfunctor of $\Xfr^{\bbf'}_{/\Sscr_G}$, evaluated on objects in ${\rm Nilp}_{\breve\ZZ_p}^{es}$, given by the additional assumption that the quasi-isogeny $\rho$ respects crystalline Tate tensors. Then the pullback of $\Igfr^{\bbf'} \times \Mfr_{\GSp_{2g},\mu'} \isom \Xfr^{\bbf'}$ restricts to an isomorphism $\Igfr^{\bbf} \times \Mfr_{G,\mu}(\bbf) \isom \Xfr^\bbf$.
  \end{subenv}
 \end{proposition}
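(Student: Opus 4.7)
The plan for (a) is to build $\tilde\pi_\infty$ by gluing the local lifts of Lemma~\ref{lem local product structure} with the reduced morphism $\pi_\infty$ from Proposition~\ref{prop infinite almost product}. \emph{Uniqueness} is immediate: on the underlying reduced subscheme the restriction is prescribed to be $\pi_\infty$, which pins down the set-theoretic behaviour, and at each closed point the lift through $\Sscr_G \to \Sscr_G^-$ is determined by the crystalline Tate tensors via Proposition~\ref{prop local-global}(i), while the étaleness of the map on restricted formal neighbourhoods (Corollary~\ref{cor criterion for etale}) pins down the infinitesimal behaviour. For \emph{existence}, observe that the reduced image of $\tilde\pi_\infty'$ lies in the closed subscheme $\Sscr_G^- \subset \Ascr_g$, and since the source is essentially smooth---so ideals are controlled by behaviour at closed points of restricted completions---this forces $\tilde\pi_\infty'$ to factor through $\Sscr_G^-$ globally. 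Lemma~\ref{lem local product structure} then supplies the further lift to $\Sscr_G$ at the level of restricted formal neighbourhoods at each closed point; by the preceding uniqueness these local lifts glue without ambiguity and, together with $\pi_\infty$ on reduced fibres, assemble into the desired $\tilde\pi_\infty$.

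For (b), recall that the isomorphism $\Igfr^{\bbf'} \times \Mfr_{\GSp_{2g},\mu'}(\bbf') \isom \Xfr^{\bbf'}$ sends $((\Acal,\lambda,\eta;j),(\Xcal,\rho))$ to $(\Acal',\lambda',\eta';\rho')$, where $\rho'$ satisfies $p^m\rho' = \bar j \circ j$ with $\bar j \colon \Acal \to \Acal'$ the canonical isogeny. Since $\bar j$ is an isogeny, it matches the crystalline Tate tensors $t_{G,\alpha}^\Acal$ and $t_{G,\alpha}^{\Acal'}$ (both pulled back from the universal object over $\Sscr_G$ via (a)). Thus the forward inclusion is immediate: if $j^\ast t_{G,\alpha}^\Acal = t_{0,\alpha}$ (i.e.\ $j$ defines an $\Igfr^\bbf$-point) and $\rho$ preserves Tate tensors (i.e.\ $(\Xcal,\rho) \in \Mfr_{G,\mu}(\bbf)$), then $\rho'$ preserves Tate tensors as well, placing the image in $\Xfr^\bbf$. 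Conversely, given a point of $\Xfr^\bbf$, the equation $p^m\rho' = \bar j \circ j$ combined with the Tate-tensor compatibility of $\bar j$ forces $j^\ast t_{G,\alpha}^\Acal = t_{0,\alpha}$, so $j \in \Igfr^\bbf$; the residual $\rho$ then inherits Tate-tensor compatibility from its universal relation to $\rho'$, giving $(\Xcal,\rho) \in \Mfr_{G,\mu}(\bbf)$. Since the outer isomorphism is already an equivalence of functors, the restriction of this bijection on $R$-points to the Tate-tensor-preserving loci is then an isomorphism of formal schemes.

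The main obstacle is the gluing step in (a): promoting the pointwise local lifts of Lemma~\ref{lem local product structure} to a single global morphism of formal schemes, where the source $\Igfr^\bbf \times \Mfr_{G,\mu}(\bbf)$ is only essentially smooth over $\breve\ZZ_p$, rather than locally of finite type as in the setting of \cite{HP}. The resolution is to invoke the Yoneda principle for the category ${\rm Nilp}_{\breve\ZZ_p}^{es}$ of essentially smooth test algebras, by which such a formal scheme is recovered from its functor of points; combined with the uniqueness of the lift (removing all gluing ambiguity) and the fact that ideals on essentially smooth schemes are detected by their restricted completions at closed points (Lemma~\ref{lemma restricted completion}), this reduces global existence to the pointwise statement supplied by Lemma~\ref{lem local product structure}.
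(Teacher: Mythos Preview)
Your outline for (a) has the right ingredients but the ``gluing'' step is not an argument as written. Knowing a lift on each restricted formal neighbourhood does not by itself produce a morphism on a Zariski open set, and ``uniqueness removes ambiguity'' does not explain how to pass from formal to Zariski data. The paper makes this precise by working inside the fibre product $\widetilde\Tfr := (\Igfr^\bbf \times \Mfr_{G,\mu}(\bbf)) \times_{\Ascr_g} \Sscr_G$: the section $s$ built in the proof of Theorem~\ref{thm almost product} singles out a clopen union of components $\Zfr \subset \widetilde\Tfr$, and Lemma~\ref{lem local product structure} together with Corollary~\ref{cor criterion for etale} shows $\tilde\nu_{|\Zfr}$ is \'etale at every closed point, hence (by Jacobson-ness) an isomorphism. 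Its inverse is the sought section. This is the rigorous content behind your ``glue the local lifts''; without the fibre-product packaging you have not explained how the pointwise data assembles.

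Your argument for (b) has a genuine gap in the converse direction. You write that $\bar j$ matches $t_{G,\alpha}^\Acal$ with $t_{G,\alpha}^{\Acal'}$, ``both pulled back from the universal object over $\Sscr_G$ via (a)''. But the map in (a) is $\tilde\pi_\infty\colon \Igfr^\bbf \times \Mfr_{G,\mu}(\bbf)\to\Sscr_G$, and the abelian scheme it pulls back is $\Acal'$, not $\Acal$. The Igusa-side datum $(\Acal,\lambda,\eta;j)$ produced by inverting the Caraiani--Scholze isomorphism lives only over $C'\subset\Ascr_g$; nothing you have said equips $\Acal$ with crystalline Tate tensors coming from $\Sscr_G$, nor shows that the underlying point of $C'$ lifts to $C\subset\Sscr_G$. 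Transporting tensors along the isogeny $\bar j$ gives a family of tensors on $\DD(\Acal[p^\infty])$, but it does not by itself produce an $R$-point of $\Sscr_G$ lying over the given $R$-point of $\Ascr_g$; that is exactly the subtle step. Similarly, concluding $(\Xcal,\rho)\in\Mfr_{G,\mu}(\bbf)$ requires the $G$-torsor and Hodge-filtration conditions of \cite{HP}, not merely that $\rho$ respects tensors. The paper sidesteps all of this: it shows that the image $\Xfr_0^\bbf$ of $\Igfr^\bbf\times\Mfr_{G,\mu}(\bbf)$ is a \emph{closed} subfunctor of $\Xfr^\bbf$ (using that $\Igfr^\bbf\hookrightarrow\Igfr^{\bbf'}$ and $\Mfr_{G,\mu}(\bbf)\hookrightarrow\Mfr_{\GSp_{2g},\mu'}(\bbf')$ are closed immersions), checks equality on $\FFbar_p$-points via Proposition~\ref{prop infinite almost product}, and then for an arbitrary $R\in{\rm Nilp}_{\breve\ZZ_p}^{es}$ uses Corollary~\ref{cor criterion for etale} to show that the closed subscheme $\Spf R'\hookrightarrow\Spf R$ factoring through $\Xfr_0^\bbf$ is \'etale, hence all of $\Spf R$. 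This closed-plus-dense strategy is what you should replace your tensor-chasing with.
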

 \begin{proof}
  Let $\widetilde\Tfr := (\Igfr^\bbf \times \Mfr_{G,\mu}(\bbf)) \times_{\Ascr_g} \Sscr_G$ and $\tilde\nu\colon \widetilde\Tfr \to \Igfr^\bbf \times \Mfr_{G,\mu}(\bbf)$ be the canonical projection. By Corollary~\ref{cor lift to normalisation} we have to construct a section of $\tilde\nu$. Recall that the proof of Theorem~\ref{thm almost product} constructs a clopen subset 
  \[ 
  Z_{N,m} := s(\Jscr_m^{(p^{-N})} \times \Mscr_{G,\mu}(\bbf)^{m_1,m_2}) \subset (\Jscr_m^{(p^{-N})} \times \Mscr_{G,\mu}(\bbf)^{m_1,m_2}) \times_{\Ascr_g} \Sscr_G
  \]
  which is mapped isomorphically onto $\Jscr_m^{(p^{-N})} \times \Mscr_{G,\mu}(\bbf)^{m_1,m_2}$ by the canonical projection. Denote by $\Zfr$ the union of connected components of $\widetilde\Tfr$ whose underlying reduced subscheme equals the limit of $Z_{N,m}$. In particular $\nu_{|\Zfr}$ induces an isomorphism of the underlying reduced subschemes. Moreover, $\tilde\nu$ induces an isomorphism of restricted formal neighbourhoods at every point $z \in \Zfr(k)$ by Lemma~\ref{lem local product structure} and is thus \'etale at $z$ by Corollary~\ref{cor criterion for etale}. Hence $\tilde\nu$ is an isomorphism on a neighbourhood of $z$. As $\Zfr$ is Jacobson, $\tilde\nu_{|\Zfr}$ is an isomorphism, thus its inverse gives us a section of $\tilde\nu$.
  
 Consider the image $\Xfr_0^{\bbf}$ of $\Igfr^{\bbf} \times \Mfr_{G,\mu}(\bbf)$ in $\Xfr^{\bbf'}_{/\Sscr_G}$. We have $\Xfr^\bbf_0 \subset \Xfr^b$ by construction. More precisely, $\Xfr^\bbf_0$ is a \emph{closed} subfunctor of $\Xfr^\bbf$ since $\Igfr^\bbf \to \Igfr^{\bbf'}$ and $\Mfr_{G,\mu}(\bbf) \to \Mfr_{\GSp_{2g},\mu'}(\bbf')$ are closed immersions by Proposition~\ref{prop comparision of Igusa varieties} and \cite[Prop.~3.11]{HowardPappas:GSpin}.

 In Proposition~\ref{prop infinite almost product}, we have already shown that $\Xfr_0^{\bbf}(\FFbar_p) = \Xfr^\bbf(\FFbar_p)$. Given a point in $\Xfr^\bbf(R)$ with $R \in {\rm Nilp}_{\breve\ZZ_p}^{es}$ arbitrary, let $\Spf R'$ denote the closed formal subscheme which factorises through $\Xfr_0^\bbf$. Then $\Spf R'$ contains all closed points of $\Spf R$ by the previous observation and moreover their restricted formal neighbourhoods by the same argument as in the proof of Lemma~\ref{lem local product structure}. Thus $\Spf R' \mono \Spf R$ is \'etale by Corollary~\ref{cor criterion for etale} and hence an isomorphism. Thus $\Xfr_0^\bbf = \Xfr^\bbf$.

 \end{proof}

 By the same arguments as in \cite{CaraianiScholze:ShVar}, one gets deduces the product structure on the generic fibre . Denote by $A_G,S_G,,Ig^\bbf,M_{G,\mu}(\bbf)$ and $X^\bbf$ the respective adic generic fibres over $\breve E[\zeta_{p^\infty}]$ of $\Acal_G,\Sscr_G, \Igfr^\bbf, \Mfr_{G,\mu}(\bbf)$ and $\Xfr^\bbf$. 
 
 We briefly recall the definition of the infinite level Rapoport Zink space $M_{G,\mu,\infty}$ (for details, see \cite[\S~7.6]{Kim:RZ}). Denote by $(\Xcal_{\rm RZ},\lambda_{\rm RZ},(t_{\rm RZ, \alpha}))$ the universal object over $\Mfr_{G,\mu}(\bbf)$  and we denote by $T(\Xcal_{\rm RZ}) \coloneqq \varprojlim \Xcal_{\rm RZ}[p^n]$ its Tate module. We define $M_{G,\mu,\infty}^\diamond(\bbf)$ as the (preperfectoid) adic space parametrising morphisms $\alpha\colon M \to (T_p\Xcal_{\rm RZ})^{\rm ad}$ (where the free $\ZZ_p$-module $M$ is considered as constant adic space), such that the pairing $\psi$ on $M$ matches the pairing on $(T_p\Xcal_{\rm RZ})^{\rm ad}$ induced by $\lambda_{\rm RZ}$ and such that $\alpha$ becomes an isomorphism whenever it is restricted to some geometric point (cf.~\cite[Thm.~4.2.4]{CaraianiScholze:ShVar}). On the other hand, the adic spaces $ (\Xcal_{\rm RZ}[p^n])^{\rm ad}$ form a system of finite \'etale coverings of $M_{G,\mu}$ which corresponds to a lisse $\ZZ_p$-sheaf $T_p(X_{\rm RZ}^{\rm ad})$ on $M_{G,\mu}$. Moreover the the crystalline Tate-tensors $(t_{\rm RZ, \alpha})$ induce tensors on $T_p(X_{\rm RZ}^{\rm ad})$ (\cite[Thm.~7.1.6]{Kim:RZ}). Then $\alpha$ induces an isomorphism $\alpha_{et}$ between $M$ (now considered as constant $\ZZ_p$-sheaf) and the pullback of $T_p(X_{\rm RZ}^{\rm ad})$ to $M_{G,\mu,\infty}^\diamond$. We denote by $M_{G,\mu,\infty} \subset M_{G,\mu,\infty}^\diamond$ the locus where $\alpha_{et}$ is compatible with tensors on both sides. This is a closed adic subspace, in particular $M_{G,\mu,\infty}$ is again preperfectoid (see also \cite[Prop.~7.6.1]{Kim:RZ}). We define the adic space $X_{\infty}^\bbf$ over $X^\bbf$ by an analogous construction.

 \begin{corollary}
  We have an isomorphism
  \[
   Ig^\bbf \times M_{G,\mu,\infty}(\bbf) \isom X_{\infty}^\bbf.
  \]
  In particular, $X_{\infty}^\bbf$ is preperfectoid.
 \end{corollary}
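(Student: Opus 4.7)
The plan is to take adic generic fibres in Proposition~\ref{prop CS product}(b) and then pass to infinite level in the Rapoport-Zink direction, exactly as in \cite[\S~4.4]{CS}.

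First, taking the adic generic fibre of the isomorphism $\Igfr^\bbf \times \Mfr_{G,\mu}(\bbf) \isom \Xfr^\bbf$ from Proposition~\ref{prop CS product}(b) yields an isomorphism $Ig^\bbf \times M_{G,\mu}(\bbf) \isom X^\bbf$ of adic spaces over $\Spa(\breve\QQ_p,\breve\ZZ_p)$, where $X^\bbf$ denotes the adic generic fibre of $\Xfr^\bbf$. The natural map $X^\bbf \to S_G^\bbf$ induced by forgetting the quasi-isogeny is surjective, and by Proposition~\ref{prop infinite almost product} its fibre over a geometric point $\bar{x}$ is canonically a $J_b(\QQ_p)$-torsor (realised by the quasi-isogeny $\rho$ to the fixed framing object).

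Next I would build the map $Ig^\bbf \times M_{G,\mu,\infty}(\bbf) \to S_{G,\infty}^\bbf$. By construction of $\Igfr^\bbf$, on the adic generic fibre $Ig^\bbf$ carries a universal isomorphism of $p$-divisible groups with tensors $X_0 \times Ig^\bbf \isom A_G[p^\infty]_{Ig^\bbf}$, which induces a universal isomorphism $T_p X_0 \isom T_p A_G$ of pro-\'etale sheaves, compatible with the Tate tensors. On the other hand, $M_{G,\mu,\infty}(\bbf)$ carries a universal trivialisation $\alpha^\loc: M \isom T_p X^\univ$. Since in the product decomposition the pullback of $A_G[p^\infty]$ is built as a quotient of the pullback of $X_0$ by the kernel of the universal quasi-isogeny, and on the generic fibre every quasi-isogeny is an isomorphism, we obtain a trivialisation $\alpha : M \isom T_p A_G$ compatible with the Tate tensors, that is, a lift to $S_{G,\infty}^\bbf$. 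The lift is unique because $S_{G,\infty}^\bbf \to S_G^\bbf$ is a pro-\'etale $G(\ZZ_p)$-torsor.

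To check that the resulting morphism $Ig^\bbf \times M_{G,\mu,\infty}(\bbf) \to S_{G,\infty}^\bbf$ is an isomorphism, I would argue via the pro-\'etale covers: both sides are $J_b(\QQ_p) \times G(\ZZ_p)$-torsors over $S_G^\bbf$ when remembering the additional data, and the isomorphism $Ig^\bbf \times M_{G,\mu}(\bbf) \isom X^\bbf$ already identifies the $J_b(\QQ_p)$-actions compatibly; adding the $G(\ZZ_p)$-trivialisation on each side matches them up by the same mechanism as in \cite[Prop.~4.4.3]{CS}. Finally, $Ig^\bbf$ is perfectoid since it is the adic generic fibre of $\Igfr^\bbf$, which is a flat lift of the perfect scheme $\Jscr_\infty^\perf$ and whose restricted formal neighbourhoods were computed in the proof that $\Igfr^\bbf$ is essentially smooth to be of the form $\breve\ZZ_p\pot{x_i^{1/p^\infty}}^{\wedge p\text{-adic}}$; combined with the preperfectoid property of $M_{G,\mu,\infty}(\bbf)$ cited from \cite[Prop.~7.6.1]{kim}, the product $Ig^\bbf \times M_{G,\mu,\infty}(\bbf)$ is preperfectoid, which transfers to $S_{G,\infty}^\bbf$.

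The main obstacle I anticipate is the passage from the integral quasi-isogeny viewpoint (which is what the formal scheme isomorphism provides) to the rational Tate module trivialisation viewpoint (which defines the infinite level). One must check carefully that the two $J_b(\QQ_p)$-actions coming from $X^\bbf$ and from $S_{G,\infty}^\bbf/G(\ZZ_p)$ agree, and that the combination of the $Ig^\bbf$-trivialisation with the $M_{G,\mu,\infty}(\bbf)$-trivialisation respects the Hodge-type tensors; this uses the Tate tensor compatibility built into the definition of $\Igfr^\bbf$ together with Faltings' criterion, analogously to Lemma~\ref{lem local product structure}.
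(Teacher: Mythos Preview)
Your overall strategy coincides with the paper's: pass to adic generic fibres in Proposition~\ref{prop CS product} and then to infinite level on the Rapoport--Zink factor. The paper's execution is far more compressed. It simply records the chain
\[
 Ig^\bbf \times M_{G,\mu,\infty}(\bbf) \;\cong\; \bigl(Ig^\bbf \times M_{G,\mu}(\bbf)\bigr)\times_{M_{G,\mu}(\bbf)} M_{G,\mu,\infty}(\bbf) \;\cong\; X^\bbf \times_{M_{G,\mu}(\bbf)} M_{G,\mu,\infty}(\bbf),
\]
and then identifies the right-hand side with $S_{G,\infty}^\bbf$ by a direct comparison of moduli problems: under the second projection $X^\bbf \to M_{G,\mu}(\bbf)$ coming from the product decomposition, the universal $p$-divisible group on $M_{G,\mu}(\bbf)$ pulls back to $A_G[p^\infty]|_{X^\bbf}$, so a trivialisation of $T_p X^\univ$ over $M_{G,\mu,\infty}(\bbf)$ is literally the same datum as a trivialisation of $T_p A_G$. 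Your explicit construction of the map in the second paragraph is exactly this observation, written out; the paper just says ``from the moduli description''.

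Where your write-up wobbles is the torsor argument you use to conclude bijectivity. The claim that ``both sides are $J_b(\QQ_p)\times G(\ZZ_p)$-torsors over $S_G^\bbf$'' is not literally correct: $S_{G,\infty}^\bbf \to S_G^\bbf$ is only a $G(\ZZ_p)$-torsor, with no $J_b(\QQ_p)$-structure. What the fibre-product argument actually gives is an identification of $Ig^\bbf \times M_{G,\mu,\infty}(\bbf)$ with the pullback of $S_{G,\infty}^\bbf$ to $X^\bbf$; the remaining point is that this pullback \emph{is} $S_{G,\infty}^\bbf$ itself, and this is precisely what the moduli-description step (your second paragraph, or the paper's one-line appeal) establishes without any detour through torsors. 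So your torsor paragraph is superfluous and, as phrased, misleading; the argument is already complete once you have matched the moduli problems. The preperfectoid conclusion you give is fine and more explicit than the paper, which leaves it implicit in the citation of \cite[Prop.~7.6.1]{kim}.
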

 \begin{proof}
  This is a direct consequence of the isomorphisms
  \begin{align*}
   Ig^\bbf \times M_{G,\mu}(\bbf) & \isom X^\bbf \\
   X_G^\bbf \times_{M_{G,\mu}(\bbf)} M_{G,\mu,\infty}(\bbf) & \isom X_{\infty}^\bbf,
  \end{align*}
  where the first isomorphism follows from Proposition~\ref{prop CS product} and the second from the moduli description.
 \end{proof}
 
 \def\cprime{$'$}
\providecommand{\bysame}{\leavevmode\hbox to3em{\hrulefill}\thinspace}
\providecommand{\MR}{\relax\ifhmode\unskip\space\fi MR }
\providecommand{\MRhref}[2]{%
  \href{http://www.ams.org/mathscinet-getitem?mr=#1}{#2}
}
\providecommand{\href}[2]{#2}

 \end{document}